\documentclass[reqno]{amsart}
\usepackage{url}
\usepackage{amssymb,amsthm,amsfonts,amstext,amsmath}
\usepackage{newcent}         
\usepackage{helvet}          
\usepackage{courier}         
\usepackage{graphicx}
\usepackage{enumerate}
\numberwithin{equation}{section}
\usepackage{color}
\usepackage{float}
\usepackage[colorlinks=true,linkcolor=blue,citecolor=magenta]{hyperref}
\usepackage{mathrsfs}
\usepackage{tikz}
\usepackage{dsfont}

\usepackage[utf8]{inputenc}
\usepackage[T1]{fontenc}
\usetikzlibrary{backgrounds}
\usetikzlibrary{patterns,fadings}
\usetikzlibrary{arrows,decorations.pathmorphing}
\usetikzlibrary{calc}
\definecolor{light-gray}{gray}{0.95}
\usepackage{float}
\newtheorem{theorem}{Theorem}[section]
\newtheorem{lemma}[theorem]{Lemma}

\newtheorem{proposition}[theorem]{Proposition}
\newtheorem{corollary}[theorem]{Corollary}
\newtheorem{remark}[theorem]{Remark}
\newtheorem{definition}{Definition}
\newcommand{\mc}[1]{{\mathcal #1}}

\newcommand{\bb}[1]{{\mathbb #1}}

\newcommand{\eps}{\varepsilon}

\newcommand{\p}{\partial}
\newcommand{\N}{\mathbb N}
\newcommand{\R}{\mathbb R}

\newcommand{\Z}{\mathbb Z}
\newcommand{\ZZ}{{\bf Z}}
\newcommand{\E}{\mathbb E}

\newcommand{\PP}{\mathbb P}
\newcommand{\PSNOB}{P_t^{\text{\tiny\rm snob}}}
\newcommand{\PROBIN}{P_t^{\text{\tiny\rm Robin}}}
\newcommand{\GROBIN}{G^{\text{Robin}}}
\newcommand{\BSNOB}{B^{\text{\tiny\rm snob}}}
\newcommand{\BREF}{B^{\text{\tiny\rm ref}}}
\newcommand{\PREF}{P_t^{\text{\tiny\rm ref}}}
\newcommand{\XREF}{X_t^{\text{\tiny\rm ref}}}

\newcommand{\XSLOW}{X^{\text{\tiny\rm slow}}}
\newcommand{\ZREF}{\xi^{\text{\tiny\rm ref}}}
\newcommand{\XSB}{X^{\text{\tiny\rm slow}}}
\newcommand{\BSB}{B^{\text{\tiny\rm slow}}}
\newcommand{\YSB}{Y^{\text{\tiny\rm slow}}}
\newcommand{\BSBhat}{\hat{B}^{\text{\tiny\rm slow}}}

\newcommand{\CSNOB}{\mathscr{C}_{\text{\rm b}}(\bb G)}
\newcommand{\CSNOBO}{\mathscr{C}_{0}(\mathbb G)}
\newcommand{\LSNOB}{\text{\rm BL}(\beta)}
\newcommand\topo[2]{\genfrac{}{}{0pt}{}{#1}{#2}}
\DeclareMathOperator{\sgn}{sgn}
\def\centerarc[#1](#2)(#3:#4:#5){\draw[#1] ($(#2)+({#5*cos(#3)},{#5*sin(#3)})$) arc (#3:#4:#5);}
\newcommand{\pfrac}[2]{\genfrac{}{}{}{1}{#1}{#2}}

\newcommand{\dd}{\displaystyle}

\newcommand{\one}{\mathds{1}}

\newcommand{\fodd}{f_{\text{\rm odd}(n)}}
\newcommand{\feven}{f_{\text{\rm even}(n)}}

\newcommand{\fo}{f_{\text{\rm odd}}}
\newcommand{\fe}{f_{\text{\rm even}}}

\newcommand{\bigP}[2]{\big(\pfrac{#1}{#2})}

\def\I{\mathrm{I}}
\def\A{{\bf{A}}}
\def\II{\mathrm{I\kern-0.1emI}}

\let\oldtocsection=\tocsection
\let\oldtocsubsection=\tocsubsection
\let\oldtocsubsubsection=\tocsubsubsection
\renewcommand{\tocsection}[2]{\hspace{0em}\oldtocsection{#1}{#2}}
\renewcommand{\tocsubsection}[2]{\hspace{1em}\oldtocsubsection{#1}{#2}}
\renewcommand{\tocsubsubsection}[2]{\hspace{2em}\oldtocsubsubsection{#1}{#2}}
\DeclareRobustCommand{\SkipTocEntry}[5]{}

\keywords{Slow bond random walk, snapping out Brownian motion, functional central limit theorem, reflected Brownian motion, local time}


\begin{document}

\title[The Slow Bond RW and the  Snapping Out BM]{The Slow Bond Random Walk\\ and the  Snapping Out Brownian Motion}

\author[D. Erhard]{Dirk Erhard}
\address{UFBA\\
 Instituto de Matem\'atica, Campus de Ondina, Av. Adhemar de Barros, S/N. CEP 40170-110\\
Salvador, Brazil}
\curraddr{}
\email{erharddirk@gmail.com}
\thanks{}

\author[T. Franco]{Tertuliano Franco}
\address{UFBA\\
 Instituto de Matem\'atica, Campus de Ondina, Av. Adhemar de Barros, S/N. CEP 40170-110\\
Salvador, Brazil}
\curraddr{}
\email{tertu@ufba.br}
\thanks{}

\author[D. S. da Silva]{Diogo S. da Silva}
\address{UFBA\\
 Instituto de Matem\'atica, Campus de Ondina, Av. Adhemar de Barros, S/N. CEP 40170-110\\
Salvador, Brazil\newline
\indent IFBA, Rua Vereador Romeu Agr\'ario Martins, Tento, S/N. \newline CEP: 45400-000,
Valen\c ca, Brazil}
\curraddr{}
\email{dsdorea@gmail.com}
\thanks{}

\subjclass[2010]{60F17, 60F05, 60J65, 60J27}

\begin{abstract} 
We consider the continuous time  symmetric random walk  with a slow bond on $\bb Z$, which rates are equal to $1/2$ for all bonds, except for the bond  of vertices $\{-1,0\}$, which associated rate is given by $\alpha n^{-\beta}/2$, where $\alpha\geq 0$ and $\beta\in [0,\infty]$ are the parameters of the model. We prove here a functional central limit theorem for the random walk with a slow bond: if $\beta<1$, then it converges to the usual Brownian motion. If $\beta\in (1,\infty]$, then it converges to the reflected Brownian motion. And at the critical value $\beta=1$, it converges to the \textit{snapping out Brownian motion} (SNOB) of parameter $\kappa=2\alpha$, which is a Brownian  type-process recently  constructed in \cite{Lejay}. We also provide Berry-Esseen estimates in the dual bounded Lipschitz metric for the weak convergence of one-dimensional distributions, which we believe to be sharp. 
\end{abstract}

\maketitle


\section{Introduction}\label{s1}

Arguably one of the most important results in probability theory and statistical mechanics is Donsker's theorem which establishes a link between two key objects in the field: random walk and Brownian motion. 

In the literature many Donsker-type theorems can be found; however, most of the results are concerned with limits of random walks (in random environment, in non-Markovian setting, in deterministic non-homogeneous medium etc.) towards the \textit{usual Brownian motion}. A significant smaller set of results are about convergence towards Brownian motion with boundary conditions, see \cite{Amir} for an example.

In this paper, we prove a functional central limit theorem for the \textit{slow bond random walk} (abbreviated \textit{slow bond RW}), which is the continuous time nearest neighbour random walk on $\bb Z$ with jump rates given by $\alpha/(2n^\beta)$ if the jump is along the edge $\{-1,0\}$ and  $1/2$ otherwise.

The jump rates of the slow bond RW are depicted in Figure~\ref{fig1a}. We remark that this process was inspired by the \textit{exclusion process with a slow bond}, see \cite{efgnt,fgn1,fgn2,fgn3,FGSCMP,FN} among others. The slow bond RW can be seen simply as the \textit{symmetric exclusion process with a slow bond}  with a single particle. For the symmetric exclusion process with a slow bond, under certain initial conditions,   \cite{fgn1,fgn2,fgn3} established a dynamical phase transition in $\beta$. Surprisingly the proof of that transition neither implies or uses a similar transition for the slow bond RW nor does it give any indication of how to establish such a result. Yet, it would be natural to expect a dynamical phase transition for the slow bond RW as well. This is exactly the content of this work.
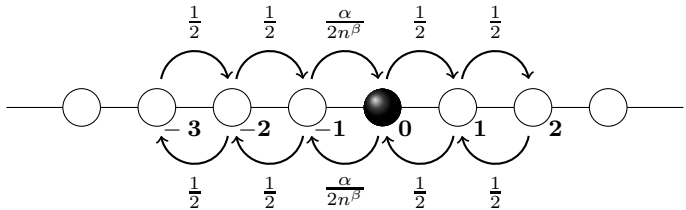
\begin{figure}[!htb]
\centering
\begin{tikzpicture}
\centerarc[thick,<-](1.5,0.3)(10:170:0.45);
\centerarc[thick,->](1.5,-0.3)(-10:-170:0.45);
\centerarc[thick,<-](2.5,0.3)(10:170:0.45);
\centerarc[thick,->](2.5,-0.3)(-10:-170:0.45);
\centerarc[thick,->](3.5,-0.3)(-10:-170:0.45);
\centerarc[thick,<-](3.5,0.3)(10:170:0.45);
\centerarc[thick,->](4.5,-0.3)(-10:-170:0.45);
\centerarc[thick,<-](4.5,0.3)(10:170:0.45);
\centerarc[thick,->](5.5,-0.3)(-10:-170:0.45);
\centerarc[thick,<-](5.5,0.3)(10:170:0.45);

\draw (-1,0) -- (8,0);

\shade[ball color=black](4,0) circle (0.25);

\filldraw[fill=white, draw=black]
(6,0) circle (.25)
(2,0) circle (.25)
(1,0) circle (.25)
(0,0) circle (.25)
(3,0) circle (.25)
(5,0) circle (.25)
(7,0) circle (.25)
;

\draw (1.3,-0.05) node[anchor=north] {\small $\bf - 3 $};
\draw (2.3,-0.05) node[anchor=north] {\small $\bf - 2 $};
\draw (3.3,-0.05) node[anchor=north] {\small $\bf -1$};
\draw (4.3,-0.05) node[anchor=north] {\small $\bf 0$};
\draw (5.3,-0.05) node[anchor=north] {\small $\bf 1$};
\draw (6.3,-0.05) node[anchor=north] {\small $\bf 2$};
\draw (1.5,0.8) node[anchor=south]{$\frac{1}{2}$};
\draw (1.5,-0.8) node[anchor=north]{$\frac{1}{2}$};
\draw (2.5,0.8) node[anchor=south]{$\frac{1}{2}$};
\draw (2.5,-0.8) node[anchor=north]{$\frac{1}{2}$};
\draw (3.5,0.8) node[anchor=south]{$\frac{\alpha}{2n^{\beta}}$};
\draw (4.5,-0.8) node[anchor=north]{$\frac{1}{2}$};
\draw (4.5,0.8) node[anchor=south]{$\frac{1}{2}$};
\draw (5.5,-0.8) node[anchor=north]{$\frac{1}{2}$};
\draw (5.5,0.8) node[anchor=south]{$\frac{1}{2}$};
\draw (3.5,-0.8) node[anchor=north]{$\frac{\alpha}{2n^{\beta}}$};
\end{tikzpicture}
\caption{Jump rates for the \textit{slow bond random walk}}
\label{fig1a}
\end{figure}

We show here that the limit for the slow bond RW depends on the range of $\beta$. If $\beta\in[0,1)$, the limit is the usual Brownian motion (BM), meaning that the slow bond has no effect in the limit; if $\beta\in(1,\infty]$ it is the reflected Brownian motion, meaning that the slow bond is powerful enough to completely split the real line around the origin in the limit. Finally, and most important, in the critical case $\beta=1$, the limit is given by the \textit{snapping out Brownian motion}, which is a stochastic process recently constructed in \cite{Lejay}. This process can be understood as a Brownian motion with the following boundary behaviour: until the moment that the local time at zero reaches a value given by an (independent of the BM) exponential random variable, the process behaves as the reflected BM. At that moment, the process is then restarted, according to an honest coin, in the positive or in the negative half line (at the origin). A precise definition is given in Section~\ref{s2} as well as  a brief explanation  of why the snapping out BM is related with the \textit{partially reflected BM}, see \cite{Grebenkov} on the latter process. 

The partially reflected BM is known to be relevant in many physical situations, including nuclear magnetic resonance, heterogeneous catalysis and electric transport in electrochemistry, see \cite{Grebenkov,NMR_survey} and the same importance is expected for the snapping out BM. Some  methods of simulations  for both the snapping out BM and the partially reflected BM have been described, see \cite[Section 6]{Lejay} and \cite[Subsection 1.1.4]{Grebenkov} and references therein. However, no rigorous functional central limit theorem has been proved until now. Furthermore, the choice of an approximating  model itself was open. Here we present a very simple discrete model which rigorously can be shown to converge to the snapping out BM.
 
A relevant feature of this work is the approach itself: since the slow bond RW cannot be written as a sum of  independent random variables, classical approaches as convergence of characteristic functions,  successive replacements (as in \cite[p.~42]{Bill} for instance) or via the $d_k$ distance (see \cite[Chapter 2]{araujoguine} for instance) do not apply here.   To overcome this difficulty, we deal directly with the convergence of expectation of bounded continuous functions to show the convergence of the one-dimensional distributions. The problem is then translated into a convergence of solutions of a  semi-discrete scheme by looking at  Kolmogorov's equation for the generator.

Convergence of semi-discrete schemes with boundary conditions are often technically very challenging. However, we  avoid here standard techniques of convergence for theses problems. Instead, via the Feynman-Kac formula, we are able to establish convergence of the semi-discrete scheme by means of probabilistic tools. The key observation is that it is possible to rewrite the problem in terms of a simple random walk and a tilted reflected random walk. The main tools developed and used involve  local times, projection of Markov chains, local central limit theorems and symmetry arguments.

The convergence of the finite dimensional distributions turn out to follow more or less directly from the convergence of the one-dimensional distributions.
Tightness issues have been handled through an appropriate application of the Burkholder-Davis-Gundy inequality to the Dynkin  martingale. 
 
 \textit{En passant}, we obtain in Section \ref{Sec:semigroup} an explicit formula for the semigroup of the snapping out BM and characterize it as a solution of a PDE with Robin boundary conditions, which is a small ingredient in the proof, but of interest by itself. A substantial part of this work is dedicated to show Berry-Esseen estimates for the one-dimensional distributions in the dual bounded Lipschitz metric. The convergence rates are indeed slower than in the classical case. A discussion of why this phenomena occurs is presented in Section~\ref{s2}.

We believe that the approach of this paper could be successful in other situations, in particular  to prove functional central limits of random walks in non-homogeneous medium. The philosophy behind our work is that \textit{analytical problems inherited from probabilistic problems are easier solved by probabilistic methods}.

The outline of the paper is the following: In Section~\ref{s2} we present definitions and state results. Section~\ref{Sec:semigroup} is reserved to present the semigroup formula for the snapping out BM. Section~\ref{s3} deals with necessary ingredients in the proof of convergence of one-dimensional distributions and Berry-Esseen estimates, all of them related to local times. Section~\ref{s5} gives the proof of Berry-Esseen estimates in the dual Lipschitz bounded norm and convergence of one-dimensional distributions. Section~\ref{s6} extends the proof to finite-dimensional distributions and in Section~\ref{sec:tight} we prove the tightness of the processes in the $J_1$-Skorohod topology of $\mathscr{D}([0,1], \bb R)$. In Appendix~\ref{appendixA} we review some known results for the sake of completeness.


\section{Statement of results}\label{s2}

\textbf{Notation:} to avoid an overload of notation,  expectations of any process considered in this article starting from a point $x$ will be denoted by $\bb E_x$. Throughout the paper, the symbol $\lesssim$ will mean that the quantity standing on the left hand side of it is smaller than some multiplicative constant times the quantity on the right hand side of it. The proportionality constant may change from one line to another, but it will never depend on the scaling parameter $n\in \bb N$.\medskip

The \textit{slow bond random walk} we define here is the Feller process on $\Z$ denoted by $\{\XSB_t:t\geq 0\}$ whose generator $\mathsf{L}_n$ acts on local functions $f:\bb Z\to \bb R$ via 
\begin{equation}\label{generator}
\mathsf{L}_n f(x) \;=\;  \xi_{x,x+1}^n \Big[f(x+1)-f(x)\Big]+\xi_{x,x-1}^n \Big[f(x-1)-f(x)\Big]\,,
\end{equation}
where 
\begin{equation*}
\xi_{x,x+1}^n\;=\;\xi_{x+1,x}^n\;=\; 
\begin{cases}
\dd\frac{\alpha}{2n^{\beta}},& \text{ if } x=-1,\vspace{4pt} \\
1/2, & \text{ otherwise}. 
\end{cases}
\end{equation*}

The \textit{elastic (or plastic or partially reflected) Brownian motion} on $[0,\infty)$ is a continuous stochastic process which can be understood as an intermediate process between the absorbed Brownian motion and the reflected Brownian motion on $[0,\infty)$. This elastic Brownian motion  can be described as the reflected Brownian motion  killed at a stopping time with exponential distribution: first, for a given positive parameter $\kappa$ we toss a  random variable $Y\sim\exp(\kappa)$ independent of the reflected Brownian motion; once the local time of the reflected Brownian motion at zero reaches $Y$, it is killed (at the origin). We refer the reader to  the survey \cite{Grebenkov} for the connection of the elastic Brownian motion (in the $d$-dimensional setting) and its connections with mixed boundary value problems and Laplacian transport phenomena.
 
The \textit{snapping out Brownian motion} process on $\bb G = (-\infty,0^-]\cup [0^+,\infty)$ with prameter $\kappa$,  abbreviated SNOB, is a Feller process recently constructed in \cite{Lejay} by gluing pieces of the elastic BM of parameter $2\kappa$. Once the $2\kappa$-elastic BM is killed, we decide whether to restart the process in $0^+$ or $0^-$  with probability $1/2$. An equivalent way of defining it is to consider the $\kappa$-elastic BM, but when the process is killed at $0^+$ (equiv. $0^-$), it is restarted on the opposite side $0^-$ (equiv. $0^+$).

Let  $\CSNOB$   be the set of bounded continuous functions $f:\bb G\to \bb R$, which are naturally identified with the set of bounded continuous functions  $f:\bb R\backslash \{0\}\to \bb R$ with side limits at zero. 
Denote by $\CSNOBO\subset \CSNOB$ the set of  bounded, continuous  functions $f:\bb G\to \bb R$  vanishing at infinity. Many statements in this paper can easily be extended to far more general spaces of functions.  Nevertheless, since Feller semigroups are defined in terms of $\CSNOBO$, and this is enough for our purposes, we will stick to this  space.

It has been shown in \cite{Lejay} that the  semigroup of the SNOB is given by:
\begin{theorem}[\cite{Lejay}]\label{lejay}
The semigroup $(\PSNOB)_{t\geq 0}: \CSNOBO\to\CSNOBO$ of the SNOB with parameter $\kappa$ is given by
\begin{equation}\label{formula}
\begin{split}
\PSNOB f(u)\;=\; & \bb E_u\Big[\Big(\frac{1+e^{-\kappa L(0,t)}}{2}\Big)\,f\big(\sgn(u)\vert B_t\vert\big)\Big]\\
& + \bb E_u\Big[\Big(\frac{1-e^{-\kappa L(0,t)}}{2}\Big)\,f\big(-\sgn(u)\vert B_t\vert\big)\Big]\,, \qquad \forall\, u\in \bb G\,,
\end{split}
\end{equation}
where  $(B_t)_{t\geq 0}$ is a standard Brownian Motion starting from $u\neq 0$ and $L(0,t)$ is its local time  at zero.
\end{theorem}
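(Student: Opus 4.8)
The plan is to obtain \eqref{formula} by reading the one-dimensional law of the SNOB directly off its gluing construction, avoiding any PDE or resolvent computation. The first step is to reduce to $u>0$: the reflection $x\mapsto -x$ maps the SNOB of parameter $\kappa$ to itself while interchanging $0^+$ and $0^-$, so both sides of \eqref{formula} transform in the same way under $(u,f)\mapsto(-u,f(-\,\cdot\,))$, and it suffices to fix some $u>0$.

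Next I would exploit the coupling built into the construction. Stitching together the pieces of (killed) elastic Brownian motion, together with the Brownian motions driving those pieces, yields a standard Brownian motion $(B_t)_{t\ge 0}$ started at $u$ for which $|\XSNOB_t|=|B_t|$ for all $t\ge 0$: the modulus of each piece is a reflected Brownian motion run up to an independent exponential threshold of its boundary local time, and because the stitching takes place at the origin, the concatenation of the moduli is again a reflected Brownian motion (by memorylessness of the exponential and additivity of the local time at $0$), which I realize as $|B|$, with local time $L(0,t)$ at $0$. Granting this, the whole problem reduces to computing, conditionally on $B$, the law of the sign $\Sigma_t:=\sgn(\XSNOB_t)\in\{+1,-1\}$, since $f(\pm|B_t|)$ is $\sigma(B)$-measurable and therefore
\[
\PSNOB f(u)=\bb E_u\Big[\bb E_u[\one_{\{\Sigma_t=1\}}\mid B]\,f(|B_t|)+\bb E_u[\one_{\{\Sigma_t=-1\}}\mid B]\,f(-|B_t|)\Big];
\]
after writing $|B_t|=\sgn(u)|B_t|$ and $-|B_t|=-\sgn(u)|B_t|$, this is precisely \eqref{formula} as soon as one establishes
\begin{equation}\label{eq:signlaw}
\bb E_u[\one_{\{\Sigma_t=\sgn(u)\}}\mid B]=\tfrac12\big(1+e^{-\kappa L(0,t)}\big),\qquad \bb E_u[\one_{\{\Sigma_t=-\sgn(u)\}}\mid B]=\tfrac12\big(1-e^{-\kappa L(0,t)}\big).
\end{equation}

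To prove \eqref{eq:signlaw} I would use the combinatorial structure of the gluing. The (real) times at which a glued piece dies become, in the clock provided by $L(0,\cdot)$, the arrival times of a Poisson process, and at each such time the process is restarted on an independently and uniformly chosen side; this extra randomness is independent of $B$. Hence, conditionally on $B$, the sign process run in the local-time scale is a two-state continuous-time Markov chain on $\{\pm1\}$ with a symmetric flip rate proportional to $\kappa$, and the elementary telegrapher's identity — once the normalization of the boundary local time has been accounted for — gives $\bb E_u[\Sigma_t\mid B]=\sgn(u)\,e^{-\kappa L(0,t)}$, which is \eqref{eq:signlaw}. It then remains to check, in the routine way, that the right-hand side of \eqref{formula} maps $\CSNOBO$ into itself: it is continuous in $u$ on each of $(0,\infty)$ and $(-\infty,0)$ with the correct one-sided limits at $0^{\pm}$, and it vanishes as $|u|\to\infty$, because there $L(0,t)\to 0$ in $\PP_u$-probability while $\bb E_u[f(\sgn(u)|B_t|)]=\bb E_u[f(B_t)]\to0$ for $f\in\CSNOBO$.

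I expect the only genuinely delicate point to be \eqref{eq:signlaw}, and specifically the bookkeeping inside it: one must be sure that in the coupling the reflected modulus $|B|$ and the randomness driving the sign are independent, and that the flip rate entering the telegrapher's identity is computed against the same normalization of the boundary local time that defines the elastic Brownian motion — it is exactly this normalization that fixes the coefficient in the exponent. The remaining ingredients (the symmetry reduction, the conditioning step, the mapping property) are straightforward. As an independent check one may bypass the construction: verify that $(t,u)\mapsto\PSNOB f(u)$ defined by \eqref{formula} has the semigroup property — which reduces, via the Markov property of reflected Brownian motion and additivity of its local time, to the identity $\tfrac14(1+e^{-\kappa a})(1+e^{-\kappa b})+\tfrac14(1-e^{-\kappa a})(1-e^{-\kappa b})=\tfrac12(1+e^{-\kappa(a+b)})$ — and that its infinitesimal generator is $\tfrac12\,\partial^2_{uu}$ on $\bb R\setminus\{0\}$ together with the Robin-type transmission condition at $0$ characterizing the SNOB, after which uniqueness of Feller semigroups concludes.
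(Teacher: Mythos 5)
The paper does not prove this statement; it simply quotes it from \cite{Lejay}, so there is no internal proof to compare against. Your strategy — realize $|\XSNOB_t|$ as $|B_t|$ for a Brownian motion $B$, condition on $B$, and identify the conditional law of the sign $\Sigma_t=\sgn(\XSNOB_t)$ as a telegraph process run in the $L(0,\cdot)$-clock — is sound and is essentially the spirit of Lejay's own derivation. The reduction to $u>0$, the stitching of the moduli into a single reflected Brownian motion via the strong Markov property and the memorylessness of the exponential threshold, the conditioning step, the mapping property $\PSNOB\CSNOBO\subset\CSNOBO$, and the consistency checks (the Chapman--Kolmogorov identity $\tfrac14(1+e^{-\kappa a})(1+e^{-\kappa b})+\tfrac14(1-e^{-\kappa a})(1-e^{-\kappa b})=\tfrac12(1+e^{-\kappa(a+b)})$, and the generator/Robin condition) are all correct; the last of these is in fact the route the paper itself takes for the companion statement, Proposition~\ref{prop23}.

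The one place where the argument is not yet a proof is exactly where you say it is delicate: the constant in the telegraph exponent. In your set-up the restart times of $\Sigma$ form, in the $L(0,\cdot)$-clock, a Poisson process of some rate $\mu$ with a uniform re-randomization at each arrival, so that $\bb E_u[\Sigma_t\mid B]=\sgn(u)\,e^{-\mu L(0,t)}$; matching \eqref{formula} requires $\mu=\kappa$. But taking literally the description of the construction given in Section~\ref{s2} — glue $2\kappa$-elastic pieces, each killed when the boundary local time of the reflected Brownian motion reaches an independent $\exp(2\kappa)$ variable, with $L(0,\cdot)$ the symmetric (Tanaka) local time of $B$, which is also the Skorokhod boundary local time of $|B|$ — the Poisson rate is $2\kappa$ and one lands on $e^{-2\kappa L(0,t)}$, not $e^{-\kappa L(0,t)}$. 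So the exponent only comes out right under a specific normalization of the local time in Lejay's definition of the elastic Brownian motion, differing by a factor of two from the one the paper uses for $L(0,\cdot)$, and this bookkeeping must be carried out explicitly against Lejay's conventions before the argument closes. You flag this, and your caution is well placed, but you assert the outcome rather than establishing it — and since the entire content of the theorem is the coefficient $\kappa$ sitting in that exponent, this is a genuine gap rather than a routine verification.
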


Above, it is understood that $\sgn(u)=1$ if $u\in [0^+,\infty)$ and $\sgn(u)=-1$ if 
$u\in (-\infty,0^-]$. For the sake of clarity, let us  briefly review the notion of local time for the BM. The \textit{occupation measure} of $(B_t)_{t\geq 0}$ up to time instant $t$ is the (random) measure $\mu_t$ defined by the equality
\begin{equation*}
\mu_t(A) \;=\; \int_0^t \mathds{1}_A(B_s)\,ds\,,\qquad \forall \, A\in \mathcal{B}\,,
\end{equation*}
where $\mathds{1}_A$ is the indicator function of the set $A$, and $\mathcal{B}$ are the Borelian sets of $\bb R$. In \cite{Levy39,Levy65}, L\'evy showed that, for almost all trajectories of the BM,  the measure $\mu_t$ has a density $L(u,t)$ with respect to the Lebesgue measure, that is 
\begin{equation*}
\mu_t(A)\;=\; \int_A L(u,t)\,du\,,\qquad\forall\, t\geq 0\,.
\end{equation*}
In \cite{Trotter}, before the advent of stochastic calculus and based on a profound study of the structure of zeros of BM, Trotter proved that there exists a modification of the local time $L(u,t)$ which is continuous on $\bb R \times [0,\infty)$. With a slight abuse of notation, we denote such a modification also by $L(u,t)$. It therefore holds with probability one  that
\begin{equation*}
L(u,t)\;=\; \lim_{\eps \searrow 0} \frac{1}{2\eps}\int_0^t \mathds{1}_{(x-\eps,x+\eps)} (B_s)\,ds\,,\qquad \forall \, (x,t)\in \bb R \times [0,\infty).
\end{equation*}
An equivalent and elegant definition of Brownian local times by means of It\^{o}-calculus is provided by Tanaka's formula
\begin{equation}\label{eq:Tanaka}
L(u,t) \;=\; |B_t-u| - |B_0-u|- \int_0^t \sgn(B_s)\, dB_s\,,
\end{equation} 
which holds for any $u\in \R$, see  for instance \cite[p. 239]{RevuzYor}.
On the equivalence between these two notions of local time, see \cite[p. 224, Corollary~1.6]{RevuzYor}.
 For some history on the development of local times and earlier references, see the survey~\cite{Borodin}, and for a more modern proof on the existence of the jointly continuous modification of the local time, see \cite[p. 225, Theorem~1.7]{RevuzYor}.   

We comment that  \cite{Lejay} only enunciates that the SNOB is a strong Markov process. But the fact that the SNOB is a Feller process is   a simple consequence of  formula \eqref{formula},   continuity, and positiveness of $L(u,t)$,  which put together imply  that $\PSNOB\CSNOBO\subset \CSNOBO$ by    the Dominated Convergence Theorem. 

The main result of this paper consists of the following Donsker-type theorem, which surprisingly connects the \textit{slow bond random walk} with the \textit{snapping out Brownian motion}.
\begin{theorem}\label{thm21}
Let $u\in \bb R\backslash \{0\}$ and consider  the slow bond random walk\break  $\{n^{-1}\XSB_{tn^2}:t\in[0,1]\}$ starting from the site $\lfloor un\rfloor\in \bb Z$. Then, 
$\{n^{-1}\XSB_{tn^2}:t\in[0,1]\}$ converges in distribution, with respect to the $J_1$-topology of Skorohod  of $\mathscr{D}([0,1], \bb R)$, to a process $Y= \{Y_t:t\in[0,1]\}$, where $Y$ is:\medskip

$\bullet$ for $\beta\in [0,1)$,    the  Brownian motion $B$ starting from $u$.\medskip

$\bullet$ for $\beta=1$,   the snapping out Brownian motion $\BSNOB$ of parameter $\kappa=2\alpha$ starting from $u$.\medskip

$\bullet$ for $\beta\in(1,\infty]$,    the reflected Brownian motion $B^{\text{\rm ref}}$ starting from $u$.\smallskip
\end{theorem}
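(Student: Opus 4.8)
The plan is to prove the functional central limit theorem in two stages, as is standard: first establish convergence of finite-dimensional distributions, then prove tightness in the $J_1$-Skorohod topology. By the Markov property of $\XSB$, the finite-dimensional convergence reduces to convergence of one-dimensional distributions together with convergence of the transition semigroups; so the heart of the matter is to show that for every $f\in\CSNOBO$ (or a suitable generating class of bounded continuous test functions) and every $t$, the scaled discrete semigroup $\bb E_{\lfloor un\rfloor}[f(n^{-1}\XSB_{tn^2})]$ converges as $n\to\infty$ to $\PSNOB f(u)$ when $\beta=1$, to the standard heat semigroup when $\beta\in[0,1)$, and to the reflected-BM semigroup when $\beta\in(1,\infty]$. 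I would phrase this as a convergence-of-semigroup statement and then invoke the Trotter--Kurtz / Feller-semigroup machinery, the point being that for $\beta=1$ the limit semigroup $\PSNOB$ with $\kappa=2\alpha$ is genuinely Feller by the remark following Theorem~\ref{lejay}, so once the one-dimensional convergence is in hand the finite-dimensional statement follows "more or less directly" as the introduction promises.

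For the one-dimensional convergence I would follow the probabilistic route advertised in the introduction rather than PDE estimates on the semi-discrete scheme. Write Kolmogorov's backward equation $\partial_t u_n = \mathsf L_n u_n$ for $u_n(t,x)=\bb E_x[f(n^{-1}\XSB_{tn^2})]$ after the diffusive rescaling, and then decompose the slow-bond walk pathwise. The key observation is that away from the bond $\{-1,0\}$ the walk is an ordinary symmetric random walk, and the effect of the slow bond can be captured by a Feynman--Kac / martingale-decomposition argument involving the local time of the underlying simple random walk at the sites $-1$ and $0$: each time the walk attempts to cross the slow bond it succeeds only with probability of order $\alpha n^{-\beta}$, so the number of crossings before a given macroscopic time is governed by (an exponential clock run against) the boundary local time, exactly as in the definition of the snapping out BM. Concretely I would represent $\XSB$ as a time-change/coin-flip modification of a reflected simple random walk on each side, show via a local central limit theorem and a local-time convergence (Section~\ref{s3}'s ingredients: local times, projection of Markov chains, LCLTs, symmetry) that the discrete boundary local time rescales to $L(0,t)$, and that the geometric crossing mechanism rescales to the factors $\tfrac12(1\pm e^{-2\alpha L(0,t)})$ appearing in \eqref{formula}. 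The scaling $\alpha n^{-\beta}\cdot(\text{number of visits}\sim n)$ explains the trichotomy: it vanishes for $\beta<1$ (no effect, full BM), it is of order one for $\beta=1$ (SNOB with $\kappa=2\alpha$), and it diverges for $\beta>1$ (the bond is never crossed, reflected BM). The symmetry $x\mapsto -1-x$ of the configuration is what lets one reduce the two-sided analysis to a one-sided (tilted reflected) walk.

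Tightness I would obtain by the usual Dynkin-martingale approach: writing $M^n_t = n^{-1}\XSB_{tn^2} - n^{-1}\XSB_0 - \int_0^t n\,\mathsf L_n(\mathrm{id})(\XSB_{sn^2})\,ds$, note that the drift term $\mathsf L_n(\mathrm{id})$ is supported near the slow bond and is bounded, so it contributes a negligible (in fact vanishing, after rescaling) finite-variation part, while the quadratic variation of $M^n$ is comparable to that of a rescaled simple random walk; then the Burkholder--Davis--Gundy inequality applied to $M^n$ yields the moment bound $\bb E[|Y^n_t - Y^n_s|^4]\lesssim (t-s)^2$ uniformly in $n$, which by the Kolmogorov--Chentsov/Billingsley criterion gives tightness in $\mathscr D([0,1],\bb R)$ with the $J_1$ topology; combined with the finite-dimensional convergence this identifies the limit and completes the proof. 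I expect the main obstacle to be the one-dimensional convergence at the critical value $\beta=1$ — specifically, proving the joint convergence of the rescaled walk together with its boundary local time and the geometric crossing clock, with enough uniformity to pass to the limit inside the Feynman--Kac expectation — since here one cannot use independence, characteristic functions, or successive-replacement arguments, and the delicate point is controlling the discrete local time at the two boundary sites $-1,0$ well enough (via the local central limit theorem and symmetry) to produce exactly the exponential factor $e^{-2\alpha L(0,t)}$ rather than something off by a constant.
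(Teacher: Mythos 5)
Your overall plan (one-dimensional convergence via Feynman--Kac with the boundary local time, followed by finite-dimensional convergence and tightness via the Dynkin martingale) is in the right spirit, but two points deserve attention. First, the paper does not invoke Trotter--Kurtz semigroup convergence; instead it proves the one-dimensional convergence by hand, and the crucial device you have not named is the \emph{even/odd decomposition with respect to the reflection $x\mapsto -1-x$}. Writing $f=\feven+\fodd$ (in the sense $\feven\bigP{x}{n}=\feven\bigP{-1-x}{n}$, $\fodd\bigP{x}{n}=-\fodd\bigP{-1-x}{n}$), one observes that the slow-bond semigroup preserves parity; the even part then solves the ordinary discrete heat equation (the slow bond is invisible to symmetric data), while the odd part solves a Dirichlet-type equation on $\{x\geq 0\}$ to which Feynman--Kac applies, yielding exactly the representation $\E_x\big[\fodd(|X_{tn^2}+\tfrac12|-\tfrac12)\exp\{-\alpha n^{-\beta}\xi_{tn^2}(\{-1,0\})\}\big]$. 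Your pathwise ``coin-flip at each crossing'' picture is the right heuristic, but the even/odd split is what turns it into a clean identity to which the local-time couplings of Section~\ref{s3} apply; without it the crossing mechanism is much harder to control rigorously.

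Second, in the tightness argument your claim that the drift $\mathsf L_n(\mathrm{id})$ ``contributes a negligible (in fact vanishing, after rescaling) finite-variation part'' is wrong. Although $\mathsf L_n(\mathrm{id})(x/n)$ is supported on $\{-1,0\}$ and of size $O(1/n)$, the time integral up to $tn^2$ picks up the occupation times of $\XSB$ at those two sites, giving
$$\int_0^{tn^2}\mathsf L_n(\mathrm{id})\Big(\frac{\XSB_s}{n}\Big)\,ds \;=\; \frac1n\Big[\frac12-\frac{\alpha}{2n^\beta}\Big]\big[\ell_{tn^2}(0)-\ell_{tn^2}(-1)\big],$$
and each $\ell$ is of order $n$ under diffusive scaling, so the drift is $O(1)$, not vanishing. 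In fact its second moment over $[sn^2,tn^2]$ is of order $|t-s|$ --- the same order as the martingale's quadratic variation --- so it cannot be discarded. The paper controls it by bounding $|\ell(0)-\ell(-1)|\leq \ell(0)+\ell(-1)$, coupling the latter to $\xi(0)+\xi(-1)$ via the lumping to the reflected walk, and then applying the local-time moment estimate (Proposition~\ref{prop:localtime}). With that repair your fourth-moment bound $\E[|Y^n_t-Y^n_s|^4]\lesssim(t-s)^2$ does go through and gives tightness by the Billingsley criterion; the paper's route via a second-moment bound plus the Markov-property product trick is equivalent.
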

 Above, it is understood that $B^{\text{ref}}$ is the reflected Brownian motion with state space $\bb G$. 
 The semigroup  of $B$ is as is well known
  \begin{equation}\label{semiBM}
  P_t f(u)\;=\; \E_u\big[f(B_t)\big]\;=\; \frac{1}{\sqrt{2\pi t}}\int_{\bb R} e^{-\frac{(u-y)^2}{2t}}f(y)\,dy\,,\quad \textrm{for any }u\in\bb R\,,
 \end{equation}
 while the semigroup of the reflected Brownian motion is given by
   \begin{equation*}
 \PREF  f(u)\;=\;
  \begin{cases}
\displaystyle   \frac{1}{\sqrt{2\pi t}}\int_{0}^{+\infty}\Big[
e^{-\frac{(u-y)^2}{2t}}+e^{-\frac{(u+y)^2}{2t}}\Big]\,f(y)\,dy\,,\quad &\textrm{for }u\in[0^+,\infty)\,,\vspace{0.2cm}\\
\displaystyle  \frac{1}{\sqrt{2\pi t}}\int_{0}^{+\infty}\Big[
e^{-\frac{(u-y)^2}{2t}}+e^{-\frac{(u+y)^2}{2t}}\Big]\,f(-y)\,dy\,,\quad &\textrm{for }u\in(-\infty,0^-]\,.\\
\end{cases}
 \end{equation*}
Next, we  connect the SNOB with a partial differential equation with Robin boundary conditions. 
\begin{proposition}\label{prop23}
Let $(\PSNOB)_{t\geq 0}: \CSNOBO \to \CSNOBO$ be the semigroup of the\break SNOB with parameter $\kappa$. Then, for any  $f\in \CSNOBO$, we have that $\PSNOB f (u)$ is the solution of the partial differential equation 
\begin{equation}\label{pdeRobin}
\begin{cases}
\partial_t \rho(t,u)\;=\;\frac{1}{2}\Delta \rho(t,u)\,, &u\neq 0\\ 
\partial_u \rho(t,0^{+})\;=\;\partial_u\rho (t,0^{-})\;=\;\dfrac{\kappa}{2}\big[\rho(t,0^{+})-\rho(t,0^{-})\big]\,, & t>0\\
\rho(0,u)\;=\;f(u)\,,& u\in \bb R.
\end{cases}
\end{equation}
Moreover, the semigroup $(\PSNOB)_{t\geq 0}: \CSNOBO \to \CSNOBO$ is given by  \begin{equation*}
  \begin{split}
  & \PSNOB f(u)= \frac{1}{\sqrt{2\pi t}}\Bigg\{\int_{\bb R}
e^{-\frac{(u-y)^2}{2t}} \fe (y)\,dy \\
    & + e^{\kappa u}\int_u^{+\infty} e^{-\kappa z} \int_0^{+\infty}
\Big[(\pfrac{z-y+\kappa t}{2t})e^{-\frac{(z-y)^2}{2t}}+(\pfrac{z+y-\kappa t}{2t})e^{-\frac{(z+y)^2}{2t}}\Big]\,
\fo (y)\, dy\, dz\Bigg\},\\
  \end{split}
  \end{equation*}
\noindent for $u>0$ and
  \begin{equation*}
  \begin{split}
 & \PSNOB f(u)= \frac{1}{\sqrt{2\pi t}}\Bigg\{\int_{\bb R}
e^{-\frac{(u-y)^2}{2t}} \fe (y)\,dy \\
    & - e^{-\kappa u}\int_{-u}^{+\infty} e^{-\kappa z} \int_0^{+\infty}
\Big[(\pfrac{z-y+\kappa t}{2t})e^{-\frac{(z-y)^2}{2t}}+(\pfrac{z+y-\kappa t}{2t})e^{-\frac{(z+y)^2}{2t}}\Big]\,\fo (y)\, dy\, dz\Bigg\},\\
  \end{split}
  \end{equation*}
\noindent for $u<0$, where $\fe$ and $\fo$ are the even and odd parts of $f$, respectively.
\end{proposition}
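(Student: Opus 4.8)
The plan is to read the statement off Lejay's representation \eqref{formula} after separating $f$ into its even and odd parts $\fe(x)=\tfrac12(f(x)+f(-x))$ and $\fo(x)=\tfrac12(f(x)-f(-x))$. For $u>0$ we have $\sgn(u)|B_t|=|B_t|$ and $-\sgn(u)|B_t|=-|B_t|$, so \eqref{formula} rearranges to $\PSNOB f(u)=\bb E_u[\fe(|B_t|)]+\bb E_u[e^{-\kappa L(0,t)}\fo(|B_t|)]$; since $\fe$ is even and $\fo$ is odd this equals $P_t\fe(u)+v(t,u)$, with $P_t$ the heat semigroup \eqref{semiBM} and
\begin{equation*}
v(t,u)\;:=\;\bb E_u\big[e^{-\kappa L(0,t)}\,\fo(|B_t|)\big].
\end{equation*}
The symmetric computation for $u<0$ gives $P_t\fe(u)-v(t,-u)$, so $u\mapsto\PSNOB f(u)-P_t\fe(u)$ is the odd extension of $v$. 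The term $P_t\fe$ is the first summand in the asserted formula, and being a smooth even function of $u$ it satisfies $\p_u(P_t\fe)(0^+)=\p_u(P_t\fe)(0^-)=0$. Thus everything reduces to understanding $v(t,u)$ for $u>0$, i.e.\ the semigroup of the elastic (partially reflected) Brownian motion on $[0,\infty)$ evaluated at $\fo|_{[0,\infty)}$.

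For the PDE \eqref{pdeRobin} I would argue by It\^{o}-Tanaka. By Tanaka's formula \eqref{eq:Tanaka}, $R_t:=|B_t|$ is a reflected Brownian motion whose Skorohod local time at $0$ is exactly $L(0,t)$, i.e.\ $R_t-L(0,t)$ is a Brownian motion. A standard Feynman-Kac argument — writing the Dynkin martingale of $s\mapsto e^{-\kappa L(0,s)}w(t-s,R_s)$ against a classical solution $w$, observing that the $ds$-terms force $\p_t w=\tfrac12\Delta w$ on $(0,\infty)$ and that the $dL(0,s)$-terms, supported on $\{R_s=0\}$, force $\p_u w(t,0^+)=\kappa w(t,0^+)$, and invoking uniqueness of bounded solutions of this half-line problem — shows that $v$ is this solution: it solves the heat equation on $(0,\infty)$ with $v(0,\cdot)=\fo$ and $\p_u v(t,0^+)=\kappa v(t,0^+)$. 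Transporting this through the odd extension and adding $P_t\fe$, and combining $\p_u(P_t\fe)(0^\pm)=0$ with $\rho(t,0^+)-\rho(t,0^-)=2v(t,0^+)$, yields exactly the transmission condition $\p_u\rho(t,0^+)=\p_u\rho(t,0^-)=\tfrac\kappa2[\rho(t,0^+)-\rho(t,0^-)]$ of \eqref{pdeRobin}. Uniqueness for \eqref{pdeRobin} in the bounded class follows by decomposing any solution into even and odd parts: the even part solves the heat equation on $\bb R$, the odd part $w$ solves the heat equation on $(0,\infty)$ with $\p_u w(t,0^+)=\kappa w(t,0^+)$, and both problems are classically well posed. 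Hence $\PSNOB f$ is the solution of \eqref{pdeRobin}.

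For the explicit kernel I would compute $v(t,u)$, $u>0$, from the classical joint law of $(|B_t|,L(0,t))$ under $\bb E_u$. On $\{L(0,t)=0\}$ it is, by the reflection principle, carried by the Dirichlet heat kernel $p_t(u-y)-p_t(u+y)$ with $p_t(x):=(2\pi t)^{-1/2}e^{-x^2/2t}$; on $\{L(0,t)>0\}$, conditioning on the first hitting time of $0$ and using the strong Markov property, L\'evy's identity for $(|B|,L(0,\cdot))$, and the convolution of first-passage densities, it is carried by a kernel proportional to $(u+y+\ell)(2\pi t^3)^{-1/2}e^{-(u+y+\ell)^2/2t}$ in $(y,\ell)$. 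Integrating against $e^{-\kappa\ell}\fo(y)$, substituting $z=u+y+\ell$ and then $z\mapsto z+y$, and integrating by parts once in $z$ (the boundary term at $z=\infty$ vanishing thanks to the factor $e^{-\kappa z}$) converts the Gaussian-times-linear density, together with the Dirichlet part, into the kernel $\bigl(\tfrac{z-y+\kappa t}{2t}\bigr)e^{-(z-y)^2/2t}+\bigl(\tfrac{z+y-\kappa t}{2t}\bigr)e^{-(z+y)^2/2t}$ and the outer factor $e^{\kappa u}\int_u^\infty e^{-\kappa z}(\cdots)\,dz$ of the statement; the case $u<0$ then follows from $v(t,u)=-v(t,-u)$. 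Alternatively, once the PDE characterisation is in hand, one may simply verify directly that the displayed formula solves \eqref{pdeRobin} — using $\p_u\bigl[e^{\kappa u}\int_u^\infty e^{-\kappa z}g(z)\,dz\bigr]=\kappa e^{\kappa u}\int_u^\infty e^{-\kappa z}g(z)\,dz-g(u)$, the vanishing of the kernel at $z=0$, and the fact that each Gaussian solves the heat equation in $z$ — and conclude by uniqueness.

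The step I expect to be the real work is the explicit computation: producing the joint law of $(|B_t|,L(0,t))$ with the correct constants and then massaging the resulting double integral into exactly the stated kernel, together with the routine but fiddly justifications of differentiating under the expectation/integral at the boundary and of the vanishing of boundary terms in the integrations by parts. The It\^{o}-Tanaka step is standard; its only subtlety is the identification of the local time in the Skorohod decomposition of $|B|$ with the occupation-density local time $L(0,t)$, which is precisely the content of \eqref{eq:Tanaka}.
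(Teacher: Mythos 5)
Your proposal is correct in spirit but takes a genuinely different route from the paper. You start from Lejay's probabilistic representation \eqref{formula}, split into even and odd parts, identify $\PSNOB f(u)=P_t\fe(u)\pm v(t,\mp u)$ with $v(t,u)=\E_u[e^{-\kappa L(0,t)}\fo(|B_t|)]$, then invoke It\^o--Tanaka/Feynman--Kac to show $v$ solves the half-line Robin problem, and compute the explicit kernel directly from the joint law of $(|B_t|,L(0,t))$. The paper instead never touches the probabilistic formula for the proof of this proposition: it quotes a separate result of Lejay (Proposition 3.1) giving the resolvent identities $(\lambda-\tfrac12\Delta)G_\lambda f=f$ and the Robin condition for $G_\lambda$, takes the explicit formula as known from an adaptation of \cite[Proposition~2.3]{fgn2} (the integrating-factor reduction $v=\kappa\rho-\p_u\rho$ to a Dirichlet half-line problem sketched in Appendix~\ref{appendixA}), checks that the Laplace transform of $\PROBIN$ satisfies the same resolvent identities, and concludes $\PROBIN=\PSNOB$ by uniqueness of the generator. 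What each buys: the paper's route is shorter once the external resolvent result is admitted and sidesteps any stochastic calculus at the origin; yours is more self-contained and actually re-derives the resolvent/Robin characterisation from first principles, at the cost of having to justify smoothness of $v$ and uniqueness for the half-line Robin problem (you correctly note that the It\^o--Tanaka step only shows ``every classical solution equals $v$'', so you need existence/regularity separately, which your explicit-kernel computation supplies). Both approaches hinge on the same even/odd decomposition, which matches the structure the paper uses elsewhere.

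One thing to watch when you carry out the joint-law computation: with the occupation-density normalisation of $L(0,t)$ used here (Tanaka's formula \eqref{eq:Tanaka}), the joint density of $(|B_t|,L(0,t))$ on $\{L>0\}$ started from $u\geq 0$ is $\tfrac{2(u+y+\ell)}{\sqrt{2\pi t^3}}e^{-(u+y+\ell)^2/2t}$, with the factor $2$. If you push that, together with the Dirichlet part on $\{L=0\}$, through the substitutions and integration by parts you describe, the odd-part kernel you obtain is $\tfrac{z-y+\kappa t}{t}e^{-(z-y)^2/2t}+\tfrac{z+y-\kappa t}{t}e^{-(z+y)^2/2t}$, i.e.\ twice the kernel displayed in the proposition (sanity check: at $\kappa=0$ the displayed kernel integrates in $z$ over $(u,\infty)$ to $\tfrac12[e^{-(u-y)^2/2t}+e^{-(u+y)^2/2t}]$, which is only half the reflected heat kernel $P_t^{\text{ref}}$). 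So either track the constant carefully or expect to find a factor-of-two typo in the displayed formula; this does not affect the validity of your method, only the bookkeeping.
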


In order to state the Berry-Esseen estimates, we review some further concepts of weak convergence on probability spaces. Given a metric space $(S,d)$, the space of bounded Lipschitz functions $\text{BL}(S)$ is the set of real functions on $S$ such that
\begin{align}
&\Vert f\Vert_\infty \;=\; \sup_{u\in S} |f(u)|\;<\;\infty\,,\quad\text{and}\label{25}\\
&\Vert f\Vert_{\text{L}} \;=\; \sup_{\topo{u,v\in S}{u\neq v}} \frac{|f(u)-f(u)|}{d(u,v)}\;<\;\infty\,.\label{26}
\end{align} 
$\text{BL}(S)$ is a normed linear space with the norm $\Vert f\Vert_{\text{BL}}=\Vert f\Vert_\infty+\Vert f\Vert_{\text{L}}$. This norm is known as the \textit{bounded Lipschitz norm}. Let $\mathcal{P}(S)$ be the set of probability measures on the measurable space $(S,\mathcal{S})$, where $\mathcal{S}$ are the Borelian sets of $S$. The \textit{dual bounded Lipschitz metric} $d_{\text{BL}}$ on $\mathcal{P}(S)$ is defined through
\begin{align}\label{dbl}
d_{\text{BL}}(\mu,\nu)\;=\; \sup_{\topo{f\in \text{BL}(S)}{\Vert f\Vert_{\text{BL}}\leq 1}}
\Big|\int fd\mu-\int fd\nu\,\Big|\,.
\end{align}
Under the additional condition that $(S,d)$ is separable, $d_{\text{BL}}$ becomes a metric for the weak convergence. That is, given $\mu,\mu_n\in\mathcal{P}(S)$, we have that $\mu_n\Rightarrow \mu$ if, and only if, $d_{\text{BL}}(\mu_n,\mu)\to 0$, see \cite[p. 11, Corollary 2.5]{araujoguine} for instance. \smallskip

In this paper, the metric space $\mathcal{S}$ above will be $\bb R$ or $\bb G$. 
The metric space $\bb G= (-\infty, 0^-]\cup [0^+,\infty)$ has two isolated connected components. In such a case, the supremum in \eqref{26} can be restricted to the pairs $x,y$ belonging to the same connected component with no prejudice to the facts above. This will be assumed henceforth. Moreover, the set $\frac{1}{n}\bb Z$ can be embedded into both sets $\bb R$ and $\bb G$. When embedding $\frac{1}{n}\bb Z$ into $\bb G$, one must only have the caution of assuming that $\frac{0}{n}=0^+$ and to look at test functions $f:\bb R\backslash \{0\}\to \bb R$ that are continuous from the right at zero.

\begin{theorem}[Berry-Esseen estimates]\label{thmBE} Fix $t>0$ and $u\neq 0$. Denote by $\mu_{tn^2}^{\text{\rm slow}}$  the probability measure on $\bb R$  induced by the slow bond random walk $\XSB_{tn^2}/n$ starting from $\lfloor un\rfloor$.  Denote by $\mu^{\text{\rm snob}}_t$ and $\mu^{\text{ref}}_t$  the probability measures on $S=\bb G$ induced by $\BSNOB_t$ and $\BREF_t$, respectively, and  denote by $\mu_t$ the probability measure on $S=\bb R$ induced by the Brownian motion $B_t$. All the previous Brownian motions are assumed to start from $u$. We have that: 
\begin{itemize}
\item If $\beta\in [0,1)$, then
\begin{align*}
d_{\text{\rm BL}}(\mu_{tn^2}^{\text{\rm slow}},\mu_t)\;\lesssim \; n^{\beta-1}\,.
\end{align*}
\item If $\beta=1$, then for any $\delta>0$,
\begin{align*}
d_{\text{\rm BL}}(\mu_{tn^2}^{\text{\rm slow}},\mu^{\text{\rm snob}}_t)\;\lesssim \; n^{-1/2 + \delta}\,.
\end{align*}
\item If $\beta\in (1,\infty]$, then
\begin{align*}
d_{\text{\rm BL}}(\mu_{tn^2}^{\text{\rm slow}},\mu^{\text{\rm ref}}_t)\;\lesssim \; \max\{n^{-1},n^{1-\beta}\}\,.
\end{align*}
\end{itemize}
\end{theorem}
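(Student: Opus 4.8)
Since $d_{\text{BL}}$ is a supremum over test functions $f$ with $\Vert f\Vert_{\text{BL}}\le 1$, it suffices to bound $\big|\E_{\lfloor un\rfloor}[f(\XSB_{tn^2}/n)]-\E_u[f(Y_t)]\big|$ uniformly in such $f$, where $Y$ is the limit process of Theorem~\ref{thm21}. The starting point is the reflection $\sigma(x)=-1-x$ of $\Z$ about the slow bond, which commutes with $\mathsf L_n$; I would write $f=\fe+\fo$ for its $\sigma$-symmetric and $\sigma$-antisymmetric parts (so that $\Vert\fe\Vert_{\text{BL}},\Vert\fo\Vert_{\text{BL}}\le \Vert f\Vert_{\text{BL}}$) and treat the two parts separately, using that the limit semigroups decompose the same way. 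Reading off Theorem~\ref{lejay}, Proposition~\ref{prop23} and \eqref{semiBM}: the even part of every limit is governed by the reflected Brownian semigroup $\PREF\fe$, while the odd part is governed by $u\mapsto\sgn(u)\,\E_{|u|}\big[e^{-\kappa L(0,t)}\fo(|B_t|)\big]$ with $\kappa=2\alpha$ when $\beta=1$, by its $\kappa=\infty$ degeneration $\sgn(u)\,\E_{|u|}[\fo(|B_t|)\one\{L(0,t)=0\}]$ when $\beta<1$, and by its $\kappa=0$ degeneration $\sgn(u)\,\E_{|u|}[\fo(|B_t|)]$ when $\beta>1$.

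For the even part, projecting onto the $\sigma$-orbits (folding the lattice at the slow bond) is an exact lumping and yields a nearest-neighbour reflected walk $R$ on $\Z_{\ge0}$ in which the slow bond has collapsed: it jumps up from $0$ at rate $1/2$ and symmetrically at rate $1/2$ from every $k\ge1$. The even error $\big|\E_x[\fe(R_{tn^2}/n)]-\E_{|u|}[\fe(|B_t|)]\big|$ thus carries no dependence on $\alpha,\beta$, and I would bound it by $n^{-1}$ through the classical Berry--Esseen / local central limit theorem (reduce $R$ to a simple random walk by the reflection principle, with the single-site slowdown at the origin absorbed into an $O(n^{-1})$ perturbation) together with the Lipschitz regularity of $\fe$; this $n^{-1}$ reappears identically in all three regimes.

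The odd part is the core. On the folded lattice the generator on $\sigma$-antisymmetric functions is the discrete Laplacian in the interior and equals $\tfrac12(g(1)-g(0))-\tfrac{\alpha}{n^\beta}g(0)$ at the origin, so by Feynman--Kac
\begin{equation*}
\E_{\lfloor un\rfloor}\big[\fo(\XSB_{tn^2}/n)\big]=\sgn(u)\,\E_{x}\Big[\exp\big(-\alpha n^{1-\beta}\mathcal O_n\big)\,\fo(R_{tn^2}/n)\Big],\qquad \mathcal O_n:=\tfrac1n\!\int_0^{tn^2}\!\one\{R_r=0\}\,dr,
\end{equation*}
with the same walk $R$. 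The plan is then: (i) run $R$ and a reflected Brownian motion on one probability space with $\sup_{s\le t}\big|R_{sn^2}/n-|B_s|\big|\lesssim n^{-1}\log n$ outside a superpolynomially improbable event; and (ii) control the occupation time, $\big|\mathcal O_n-2\,L(0,t)\big|\lesssim n^{-1/2+\delta}$ on a good event (the constant $2$ being the occupation-time normalization that makes the limit the SNOB of parameter $2\alpha$). Granting (i)--(ii) I would conclude: for $\beta=1$, $|e^{-\alpha\mathcal O_n}-e^{-2\alpha L(0,t)}|\le\alpha|\mathcal O_n-2L(0,t)|$ together with the Lipschitz bound on $\fo$ give the odd error $\lesssim n^{-1/2+\delta}$; for $\beta>1$, $|e^{-\alpha n^{1-\beta}\mathcal O_n}-1|\le\alpha n^{1-\beta}\mathcal O_n$ with $\E[\mathcal O_n]\lesssim1$ give $\lesssim n^{1-\beta}$, which with the even $n^{-1}$ yields $\max\{n^{-1},n^{1-\beta}\}$; for $\beta<1$, split on whether $R$ reaches $0$ before $tn^2$ — on the no-hit event $\mathcal O_n=0$ and $R$ is a free walk matched to the absorbed Brownian motion at rate $n^{-1}$ (producing exactly $\E_{|u|}[\fo(|B_t|)\one\{L(0,t)=0\}]$), while on the hit event the Laplace transform of the occupation time of $0$ accumulated after the first visit, evaluated at the large parameter $\alpha n^{1-\beta}$, is $\lesssim n^{\beta-1}$ (the reflected Brownian local time at $0$ started from $0$ has a bounded density near $0$), so the whole hit-event contribution is $\lesssim n^{\beta-1}$, matching the absence of any $\{L(0,t)>0\}$ term in the limit. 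Taking the supremum over $f$ gives the three stated bounds.

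The main obstacle is step (ii): $\mathcal O_n$ is not a Lipschitz functional of the path in the Skorohod metric, so the strong embedding of (i) does not transfer to it, and off-the-shelf strong approximation of local times would be too lossy (rate $\sim n^{-1/4}$). The tool I would develop is a quantitative joint local limit theorem for the pair (position of $R$, occupation time of $R$ at the origin), obtained by decoupling over excursions of $R$ away from $0$ — so that $\mathcal O_n$ becomes a sum of independent holding times at $0$, one per excursion, and $R_{tn^2}$ a sum of excursion displacements — and then combining a local central limit theorem for the number of returns to $0$ with classical estimates for the excursion law. This is also precisely where the anomalous $n^{-1/2+\delta}$ rate originates, since fluctuations of the occupation time of a single site around its Brownian limit are of the larger order $n^{-1/2}$.
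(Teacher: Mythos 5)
Your proposal matches the paper's strategy at every structural step: the $\sigma$-symmetric/antisymmetric decomposition about the slow bond (Section~\ref{s5}), lumping by reflection to kill the slow bond in the even sector (Proposition~\ref{prop:lumping}), the Feynman--Kac representation of the odd sector in terms of a reflected walk tilted by its occupation time at the origin (equation~\eqref{eq510a}), and the regime-by-regime analysis of that exponential tilt exactly as you describe. Your identifications of the three odd limits — $\sgn(u)\,\E_{|u|}[e^{-2\alpha L(0,t)}\fo(|B_t|)]$ for $\beta=1$, the $\kappa=\infty$ degeneration $\sgn(u)\,\E_{|u|}[\fo(|B_t|)\mathds 1_{\{L(0,t)=0\}}]$ for $\beta<1$, and $\sgn(u)\,\E_{|u|}[\fo(|B_t|)]$ for $\beta>1$ — are all correct, and the normalization constant $2$ (from the occupation time of $\{-1,0\}$ versus the local time at $0$, i.e.\ Proposition~\ref{prop:localtimecontinuity}) is right. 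The $\beta>1$ bound by $\alpha n^{1-\beta}\E[\mathcal O_n]\lesssim n^{1-\beta}$ is literally what the paper does via Proposition~\ref{prop:localtime}.

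Two places where you deviate or underestimate the work required. First, for $\beta=1$ you assert that ``off-the-shelf strong approximation of local times would be too lossy (rate $\sim n^{-1/4}$)'' and propose building a joint local limit theorem for $(R_{tn^2},\mathcal O_n)$ via excursion decoupling. This is a confusion of scalings: the $n^{-1/4}$ figure is the relative error at the \emph{ballistic} scale. In the \emph{diffusive} scale $n^2$ used here, the Cs\'aki et al.\ strong approximation — the paper's Proposition~\ref{lem:RWlocaltimeapprox} — couples $\xi(0,tn^2)$ and $L(0,tn^2)$ with error $O(n^{1/2+2\delta})$ against a background of order $n$, i.e.\ relative error $n^{-1/2+\delta}$; Proposition~\ref{prop:L1est} turns this into the $L^1$ estimate the proof needs. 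So the off-the-shelf tool is in fact sufficient and the joint LCLT you propose, while a legitimate alternative, is not needed. Second, for $\beta<1$, the heuristic ``the reflected Brownian local time at $0$ started from $0$ has a bounded density near $0$, so the Laplace transform at parameter $\alpha n^{1-\beta}$ is $\lesssim n^{\beta-1}$'' is the right intuition but does not directly apply, precisely because you cannot transfer it from $L$ to the discrete occupation time $\mathcal O_n$ using strong approximation: the coupling error $n^{-1/2+\delta}$ overwhelms the target rate $n^{\beta-1}$ whenever $\beta<1/2$. What is really needed — and is the most delicate lemma of the paper — is an intrinsic discrete density estimate $\bb P_{\lfloor un\rfloor}[jn^\gamma<\xi_{tn^2}(0)\le(j+1)n^\gamma]\lesssim n^{\gamma-1}$ uniformly in $j$ (Lemma~\ref{lem:smalllocaltime}), which the paper proves by combining T\'akacs' exact distribution for the discrete local time, the Hitting Time Theorem, and careful local CLT estimates. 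Your excursion-decoupling program could plausibly deliver the same estimate, but you should flag that this step, not the $\beta=1$ coupling, is where the real technical content lies and that the bounded density of the continuum object is not a substitute.
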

We comment that the  convergences above are  slower than the Berry-Essen rate of convergence for the symmetric random walk, which is of order $n^{-1}$ (keep in mind that we are considering the  diffusive time scaling $n^2$). An intuition of why this is so is as follows.  

If $\beta\in[0,1)$, the slow bond random walk converges to the usual Brownian motion. However, the slow bond hinders the passage through the origin, thus making the speed of convergence slower.

If $\beta=1$, as we shall see, an Invariance Principle for local times of the reflected random walk plays a protagonist role in the proof of the result above. It is known that invariance principles for local times of the Brownian motion have speed of convergence\footnote{With respect to the diffusive scaling $n^2$. In the ballistic scaling $n$, used by many authors as \cite{Revesz}, it of course corresponds to a rate  of order $n^{-1/4}$.} of order  at most $n^{-\frac{1}{2}}$. This slower rate of convergence for local times is thus inherited by the rate of convergence for the slow bond random walk.

If $\beta\in(1,\infty]$, the convergence of the slow bond random walk is towards the reflected Brownian motion. In this case,  the slow bond random walk may occasionally jump over the slow bond,  being trapped with high probability in the ``wrong'' half line. This fact  is responsible for a slower rate of convergence. Note that when $\beta\geq 2$, then $\max\{n^{-1},n^{1-\beta}\}=n^{-1}$ and  the slow bond does not interfere  in the rate of convergence. 

\begin{remark}\rm
For the case $\beta=1$, in view of \cite{OnBest} it is natural to expect that the sharpest estimate should be $n^{-1/2}$ times a logarithmic correction. We expect that it would be possible with our methods to obtain such a bound upon analysing carefully and improving existing results on approximations of Brownian local times by random walk local times, for that see in particular Lemma~\ref{lem:RWlocaltimeapprox}, which is a key ingredient.
\end{remark}

\section{An expression for the SNOB semigroup}\label{Sec:semigroup}
Here we prove Proposition~\ref{prop23},  that is, we  show that  the SNOB semigroup is a solution of a heat equation with boundary condition of third (or Robin) type and, moreover, we provide an explicit formula for it. In spite of the obvious importance of having an explicit formula for the semigroup (concerning applications), we explain that its deduction, as we will see, is simply a suitable connection of results from \cite{Lejay} and \cite{fgn2}. Later, this result will be needed in the proof of the central limit theorem for the slow bond random walk.

Denote by $(G_\lambda)_{\lambda>0}$ the resolvent family of the SNOB, which  acts on $f\in \CSNOBO$ via  $G_\lambda f(u)=\E_u\Big[\int_0^\infty e^{-\lambda t} f(\BSNOB_t)\Big]dt = \int_0^\infty e^{-\lambda t} \PSNOB f(u)dt$.
 We recall the following result from \cite{Lejay}.
\begin{proposition}[\cite{Lejay}] For any $f\in \CSNOBO$, the resolvent family $(G_\lambda)_{\lambda>0}$ of the SNOB with parameter $\kappa$ satisfies
\begin{align}
& \Big(\lambda -\frac{1}{2}\Delta\Big) G_\lambda f(u) \;=\; f(u)\,,\quad  u\in \bb G\,, \label{31}\\
& \p_u G_\lambda f(0^+)=\p_u G_\lambda f(0^-)= \frac{\kappa}{2}\big[ G_\lambda f(0^+) - G_\lambda f(0^-)\big]\,.\label{32}
\end{align}
\end{proposition}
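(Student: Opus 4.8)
The plan is to read both identities off the explicit semigroup formula \eqref{formula} of Theorem~\ref{lejay}, which we may assume. Split $f=\fe+\fo$ into even and odd parts. Since $\fe$ is even, $\fe(\pm\sgn(u)|B_t|)=\fe(|B_t|)=\fe(B_t)$, and since $\fo$ is odd, $\fo(\pm\sgn(u)|B_t|)=\pm\sgn(u)\,\fo(|B_t|)$; substituting into \eqref{formula}, the prefactors combine and we obtain
\begin{equation*}
\PSNOB f(u)\;=\;\E_u\big[\fe(B_t)\big]\;+\;\sgn(u)\,\E_u\big[e^{-\kappa L(0,t)}\,\fo(|B_t|)\big]\,,\qquad u\neq0\,.
\end{equation*}
By L\'evy's identity the process $(|B_t|,L(0,t))_{t\ge0}$ has the law of $(R_t,\ell_t)_{t\ge0}$, with $R$ a reflected Brownian motion started at $|u|$ and $\ell$ its local time at $0$. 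Writing $P_t$ for the Brownian semigroup on $\bb R$ and $Q_t^{\kappa}g(a):=\E_a[e^{-\kappa\ell_t}g(R_t)]$ for the partially reflected Brownian semigroup on $[0^+,\infty)$, this reads $\PSNOB f(u)=P_t\fe(u)+\sgn(u)\,Q_t^{\kappa}\fo(|u|)$. Integrating against $e^{-\lambda t}\,dt$, and denoting by $\Phi_\lambda$ and $Q_\lambda^{\kappa}$ the corresponding resolvents, I would record the working identity
\begin{equation*}
G_\lambda f(u)\;=\;\Phi_\lambda\fe(u)\;+\;\sgn(u)\,Q_\lambda^{\kappa}\fo(|u|)\,,\qquad u\neq0\,,
\end{equation*}
from which \eqref{31} and \eqref{32} will both follow.

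For \eqref{31}: for each fixed sign of $u$ the map $(t,u)\mapsto\PSNOB f(u)$ is smooth on the corresponding open half-line and solves the heat equation $\p_t w=\tfrac12\p_u^2 w$ there --- both $P_t\fe$ and $Q_t^{\kappa}\fo$ do, the latter because away from the reflecting/killing boundary the dynamics of $Q^{\kappa}$ agree with free Brownian motion --- while $\PSNOB f\to f$ as $t\to0^+$ and $\|\PSNOB f\|_\infty\le\|f\|_\infty$. Differentiating under the integral sign and integrating by parts in $t$,
\begin{equation*}
\tfrac12\p_u^2 G_\lambda f(u)=\int_0^\infty e^{-\lambda t}\,\p_t\PSNOB f(u)\,dt=-f(u)+\lambda\,G_\lambda f(u)\,,\qquad u\neq0\,,
\end{equation*}
which is exactly \eqref{31}; one only has to record the standard Gaussian and elastic-kernel bounds that legitimate the differentiation and the integration by parts.

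For \eqref{32}: the term $\Phi_\lambda\fe$ is the convolution of $\fe$ with $\tfrac1{\sqrt{2\lambda}}e^{-\sqrt{2\lambda}|u|}$, hence $C^1$ on $\bb R$, even, with $\p_u\Phi_\lambda\fe(0)=0$ and $\Phi_\lambda\fe(0^+)=\Phi_\lambda\fe(0^-)$, so it is invisible to \eqref{32}. For the elastic term I would use that the Green kernel of $Q^{\kappa}$ on $[0^+,\infty)$ is $g_\lambda^{\kappa}(a,b)=\tfrac1{\sqrt{2\lambda}}\big[e^{-\sqrt{2\lambda}|a-b|}+\tfrac{\sqrt{2\lambda}-\kappa}{\sqrt{2\lambda}+\kappa}e^{-\sqrt{2\lambda}(a+b)}\big]$, which solves $(\lambda-\tfrac12\p_a^2)g_\lambda^{\kappa}(\cdot,b)=\delta_b$ on $(0,\infty)$ together with the Robin relation $\p_a g_\lambda^{\kappa}(0^+,b)=\kappa\,g_\lambda^{\kappa}(0^+,b)$ --- equivalently, applying It\^o--Tanaka to $e^{-\lambda t-\kappa\ell_t}w(R_t)$ and collecting the $d\ell_t$-term forces $w'(0^+)=\kappa w(0)$ on $w=Q_\lambda^{\kappa}\fo$. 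Hence $Q_\lambda^{\kappa}\fo$ is $C^1$ up to $0$ with $(Q_\lambda^{\kappa}\fo)'(0^+)=\kappa\,(Q_\lambda^{\kappa}\fo)(0^+)$. Feeding this, together with the evenness of $\fe$ and the oddness of $u\mapsto\sgn(u)Q_\lambda^{\kappa}\fo(|u|)$, into the working identity gives $\p_u G_\lambda f(0^+)=\p_u G_\lambda f(0^-)=(Q_\lambda^{\kappa}\fo)'(0^+)$ and $G_\lambda f(0^+)-G_\lambda f(0^-)=2\,(Q_\lambda^{\kappa}\fo)(0^+)$, whence $\p_u G_\lambda f(0^\pm)=\kappa\,(Q_\lambda^{\kappa}\fo)(0^+)=\tfrac{\kappa}{2}\big[G_\lambda f(0^+)-G_\lambda f(0^-)\big]$, that is \eqref{32}.

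The step I expect to be the main obstacle is the boundary analysis of the elastic term: establishing the Green kernel $g_\lambda^{\kappa}$ (equivalently, the Robin boundary condition for $Q_\lambda^{\kappa}$) and, more delicately, keeping the normalisation bookkeeping straight --- the effective killing rate $\kappa$ read off from the factor $e^{-\kappa L(0,t)}$ in \eqref{formula} versus the ``parameter $2\kappa$'' of the construction, and the factor $2$ produced by the jump $G_\lambda f(0^+)-G_\lambda f(0^-)=2Q_\lambda^{\kappa}\fo(0^+)$, which together are precisely what turns the Robin coefficient $\kappa$ into the $\kappa/2$ appearing in \eqref{32}. The even/odd collapse of \eqref{formula}, L\'evy's identity, and the Laplace-transform integration by parts are all routine once those constants are pinned down.
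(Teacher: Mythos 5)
The paper states this proposition purely as a citation to \cite{Lejay} and provides no proof of its own; your proposal, by contrast, gives an honest self-contained derivation of \eqref{31}--\eqref{32} starting from the semigroup formula \eqref{formula} of Theorem~\ref{lejay}. I checked the argument and it is correct. The even/odd collapse of \eqref{formula} into $\PSNOB f(u)=P_t\fe(u)+\sgn(u)\,\E_u[e^{-\kappa L(0,t)}\fo(|B_t|)]$ is exactly right: the even prefactors sum to $1$ and the odd ones sum to $e^{-\kappa L(0,t)}$, with the $\sgn(u)$ factored out. The Robin Green kernel $g_\lambda^\kappa(a,b)=\tfrac1{\sqrt{2\lambda}}\bigl[e^{-\sqrt{2\lambda}|a-b|}+\tfrac{\sqrt{2\lambda}-\kappa}{\sqrt{2\lambda}+\kappa}e^{-\sqrt{2\lambda}(a+b)}\bigr]$ does satisfy $\p_a g_\lambda^\kappa(0^+,b)=\kappa\,g_\lambda^\kappa(0^+,b)$ (one can also see this as you suggest, from the $d\ell_t$-term when applying It\^o--Tanaka to $e^{-\lambda t-\kappa\ell_t}w(R_t)$ with the symmetric local-time normalisation \eqref{eq:Tanaka}), and the crucial factor-of-two bookkeeping --- $\p_u G_\lambda f(0^\pm)=(Q_\lambda^\kappa\fo)'(0^+)=\kappa\,Q_\lambda^\kappa\fo(0^+)$ while $G_\lambda f(0^+)-G_\lambda f(0^-)=2Q_\lambda^\kappa\fo(0^+)$, because the odd extension $u\mapsto\sgn(u)Q_\lambda^\kappa\fo(|u|)$ doubles the jump but not the one-sided derivative --- is exactly what converts the Robin coefficient $\kappa$ into the $\kappa/2$ in \eqref{32}. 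Compared to what the paper does, your route has the merit of showing that the two facts the paper imports from \cite{Lejay} (the resolvent identities and the semigroup formula) are not independent: the former follows from the latter. The only loose ends are the standard dominated-convergence/integration-by-parts justifications you already flag, and they are indeed routine given the Gaussian decay of the kernels.
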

Denote by $\mathscr{C}^2_0(\bb G)$ the subspace of twice continuously differentiable functions $f\in \CSNOBO$ such that its first and second derivatives are in $\CSNOBO$. The knowledge on the resolvent family permits to characterize the generator of a Feller process, see \cite[Exercise (1.15) page 290]{RevuzYor} for instance.

  Now denote by $(\PROBIN)_{t\geq 0}: \CSNOBO\to \CSNOBO$ the semigroup determined by  \eqref{pdeRobin}. That is,  $\PROBIN f(u)$ denotes the  solution of the PDE \eqref{pdeRobin} with initial condition $f\in \CSNOBO$. 
  One can easily adapt the result \cite[Proposition 2.3]{fgn2} to deduce that
\begin{equation*}
  \begin{split}
  & \PROBIN f(u)= \frac{1}{\sqrt{2\pi t}}\Bigg\{\int_{\bb R}
e^{-\frac{(u-y)^2}{2t}} \fe (y)\,dy \\
    & + e^{\kappa u}\int_u^{+\infty} e^{-\kappa z} \int_0^{+\infty}
\Big[(\pfrac{z-y+\kappa t}{2t})e^{-\frac{(z-y)^2}{2t}}+(\pfrac{z+y-\kappa t}{2t})e^{-\frac{(z+y)^2}{2t}}\Big]\,
\fo (y)\, dy\, dz\Bigg\},\\
  \end{split}
  \end{equation*}
\noindent for $u>0$ and
  \begin{equation*}
  \begin{split}
 & \PROBIN f(u)= \frac{1}{\sqrt{2\pi t}}\Bigg\{\int_{\bb R}
e^{-\frac{(u-y)^2}{2t}} \fe (y)\,dy \\
    & - e^{-\kappa u}\int_{-u}^{+\infty} e^{-\kappa z} \int_0^{+\infty}
\Big[(\pfrac{z-y+\kappa t}{2t})e^{-\frac{(z-y)^2}{2t}}+(\pfrac{z+y-\kappa t}{2t})e^{-\frac{(z+y)^2}{2t}}\Big]\,\fo (y)\, dy\, dz\Bigg\},\\
  \end{split}
  \end{equation*}
\noindent for $u<0$. A brief resume of this adaptation is given in Appendix~\ref{appendixA} for the sake of completeness.

Thus, in order to conclude the proof of Proposition~\ref{prop23}, it only remains to guarantee that $\PROBIN=\PSNOB$. We claim that  the resolvent family $\GROBIN_\lambda f(u)= \int_0^\infty e^{-\lambda t} \PROBIN f(u)dt$ for \eqref{pdeRobin} also satisfies \eqref{31} and \eqref{32}.
This follows indeed from a direct computation: since $\PROBIN$ is a solution of \eqref{pdeRobin}, we have that
\begin{align*}
 \frac{1}{2}\Delta \GROBIN_\lambda f(u) & \;=\;  \frac{1}{2}\Delta \int_0^\infty\!\! e^{-\lambda t}P^{\text{\tiny\rm Robin}}_tf(u)dt \;=\; 
 \int_0^\infty \!\! e^{-\lambda t} \frac{1}{2}\Delta P^{\text{\tiny\rm Robin}}_tf(u)dt \\
 &\;=\; \int_0^\infty \!\! e^{-\lambda t} \p_t P^{\text{\tiny\rm Robin}}_tf(u)dt\;=\;
 \lambda \int_0^\infty \!\! e^{-\lambda t}  P^{\text{\tiny\rm Robin}}_tf(u)dt - f(u)\,,
\end{align*}
which gives \eqref{31}, 
and \eqref{32} follows by a similar argument.
 This claim  implies that the semigroups $\PROBIN$ and $\PSNOB$ have the same infinitesimal generator. Hence they are equal, see for instance \cite[page 291, Exercise 1.18]{RevuzYor}. This finishes the proof of the Proposition~\ref{prop23}. 

Recall the definition  of $\Vert \cdot \Vert_{L}$ in \eqref{26}. For later use, we present the following corollary of Proposition~\ref{prop23}.
\begin{corollary}\label{cor:Lipschitz}
Let $f\in \CSNOBO$, and consider the SNOB with parameter $\kappa$. Then, for any $t>0$, we have that  $\PSNOB f\in d_{\text{\rm BL}}(\bb G)$ and  
\begin{align*}
\Vert \PSNOB f\Vert_{\text{\rm BL}} \;\leq\; \Vert f\Vert_\infty\Big[1+2\kappa+3\sqrt{\frac{2}{\pi}}\Big]\,.
\end{align*}
\end{corollary}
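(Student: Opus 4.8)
The plan is to bound the two pieces of $\Vert\PSNOB f\Vert_{\text{\rm BL}}=\Vert\PSNOB f\Vert_\infty+\Vert\PSNOB f\Vert_{\text{L}}$ separately, using for the Lipschitz piece the explicit formula for $\PSNOB$ from Proposition~\ref{prop23}. The sup-norm bound is immediate: by \eqref{formula}, $\PSNOB f(u)$ is (pointwise, for each realization of $B$) a convex combination of the two numbers $f(\sgn(u)\vert B_t\vert)$ and $f(-\sgn(u)\vert B_t\vert)$, so $\Vert\PSNOB f\Vert_\infty\leq\Vert f\Vert_\infty$; this accounts for the summand $1$, and once the Lipschitz bound is in hand it will also give $\PSNOB f\in\text{\rm BL}(\bb G)$. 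For the Lipschitz seminorm, recall that by the convention adopted just after \eqref{dbl} the supremum in \eqref{26} ranges only over pairs in the same connected component of $\bb G$; since Proposition~\ref{prop23} exhibits $u\mapsto\PSNOB f(u)$ as continuous on $[0^+,\infty)$ and on $(-\infty,0^-]$ and of class $C^1$ on each open half-line, the mean value theorem reduces the problem to estimating $\sup_{u>0}\vert\p_u\PSNOB f(u)\vert$ and $\sup_{u<0}\vert\p_u\PSNOB f(u)\vert$. Writing $f=\fe+\fo$ and using $\Vert\fe\Vert_\infty\leq\Vert f\Vert_\infty$, $\Vert\fo\Vert_\infty\leq\Vert f\Vert_\infty$, the formula for $u<0$ in Proposition~\ref{prop23} is obtained from the one for $u>0$ by replacing $u$ with $-u$ and reversing the sign of the second summand, so it yields the same bound; hence it suffices to treat $u>0$.

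Fix $u>0$. The first summand of $\PSNOB f(u)$ is the heat semigroup $P_t\fe(u)$ of \eqref{semiBM}; differentiating under the integral (legitimate via the Gaussian tails and dominated convergence) gives $\p_u P_t\fe(u)=-\tfrac{1}{t\sqrt{2\pi t}}\int_{\bb R}(u-y)e^{-(u-y)^2/(2t)}\fe(y)\,dy$, whose modulus is at most $\Vert f\Vert_\infty\tfrac{1}{t\sqrt{2\pi t}}\int_{\bb R}\vert w\vert\,e^{-w^2/(2t)}\,dw=\Vert f\Vert_\infty\sqrt{2/(\pi t)}$. Write the second summand as $\tfrac{1}{\sqrt{2\pi t}}\,e^{\kappa u}\int_u^{+\infty}e^{-\kappa z}\phi(z)\,dz$, where $\phi(z)=\int_0^{+\infty}\big[(\tfrac{z-y+\kappa t}{2t})e^{-(z-y)^2/(2t)}+(\tfrac{z+y-\kappa t}{2t})e^{-(z+y)^2/(2t)}\big]\fo(y)\,dy$ is continuous in $z$. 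The key structural observation is that, by the product rule and the fundamental theorem of calculus, $\p_u\big(e^{\kappa u}\int_u^{+\infty}e^{-\kappa z}\phi(z)\,dz\big)=\kappa\,e^{\kappa u}\int_u^{+\infty}e^{-\kappa z}\phi(z)\,dz-\phi(u)$, which — using $e^{\kappa u}\int_u^{+\infty}e^{-\kappa z}\,dz=\kappa^{-1}$ — has modulus at most $2\sup_{z>0}\vert\phi(z)\vert$. It remains to bound $\phi$ uniformly on $(0,\infty)$: by $\vert z-y+\kappa t\vert\leq\vert z-y\vert+\kappa t$, $\vert z+y-\kappa t\vert\leq(z+y)+\kappa t$ and $\Vert\fo\Vert_\infty\leq\Vert f\Vert_\infty$, $\vert\phi(z)\vert$ is dominated by a sum of first-absolute-moment and total-mass Gaussian integrals; the relevant elementary identities are $\int_0^\infty\tfrac{\vert z-y\vert}{2t}e^{-(z-y)^2/(2t)}\,dy\leq\int_{\bb R}\tfrac{\vert w\vert}{2t}e^{-w^2/(2t)}\,dw=1$, $\int_0^\infty\tfrac{z+y}{2t}e^{-(z+y)^2/(2t)}\,dy=\tfrac12 e^{-z^2/(2t)}\leq\tfrac12$, and $\int_0^\infty e^{-(z\pm y)^2/(2t)}\,dy\leq\sqrt{2\pi t}$, which together give $\sup_{z>0}\vert\phi(z)\vert\leq\Vert f\Vert_\infty\big(\tfrac32+\kappa\sqrt{2\pi t}\big)$.

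Collecting the estimates, for every $u>0$ (and, by the mirror computation, for every $u<0$) one gets $\vert\p_u\PSNOB f(u)\vert\leq\Vert f\Vert_\infty\big[\sqrt{2/(\pi t)}+\tfrac{2}{\sqrt{2\pi t}}\big(\tfrac32+\kappa\sqrt{2\pi t}\big)\big]=\Vert f\Vert_\infty\big[2\kappa+\tfrac52\sqrt{2/(\pi t)}\big]$, so $\Vert\PSNOB f\Vert_{\text{L}}\leq\Vert f\Vert_\infty[2\kappa+\tfrac52\sqrt{2/(\pi t)}]$. For $t\geq1$ this is at most $\Vert f\Vert_\infty[2\kappa+3\sqrt{2/\pi}]$, and combined with $\Vert\PSNOB f\Vert_\infty\leq\Vert f\Vert_\infty$ we obtain $\Vert\PSNOB f\Vert_{\text{\rm BL}}\leq\Vert f\Vert_\infty[1+2\kappa+3\sqrt{2/\pi}]$, in particular $\PSNOB f\in\text{\rm BL}(\bb G)$. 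I expect the only mildly delicate points to be the bookkeeping of the constants in the Gaussian estimates and making the ``differentiation under the integral'' rigorous; the one genuinely non-routine idea is the observation that the Robin-type term $e^{\kappa u}\int_u^{+\infty}e^{-\kappa z}\phi\,dz$ differentiates to $\kappa$ times itself minus $\phi(u)$, which is precisely what makes the $\kappa$-dependence of the final constant collapse to the clean $2\kappa$.
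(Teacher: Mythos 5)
Your approach is the same as the paper's: bound the sup-norm by noting that $\PSNOB$ is a contraction semigroup (you phrase this as a convex combination of values of $f$, which is the same fact read off from \eqref{formula}), and bound the Lipschitz seminorm by differentiating the explicit formula from Proposition~\ref{prop23}. The paper's proof only says ``long but elementary calculations''; you actually carry them out, and your intermediate steps are correct: $\Vert\p_u P_t\fe\Vert_\infty\le\Vert f\Vert_\infty\sqrt{2/(\pi t)}$, the product-rule identity $\p_u\bigl(e^{\kappa u}\int_u^\infty e^{-\kappa z}\phi(z)\,dz\bigr)=\kappa e^{\kappa u}\int_u^\infty e^{-\kappa z}\phi(z)\,dz-\phi(u)$, the bound $\sup_{z>0}|\phi(z)|\le\Vert f\Vert_\infty(\tfrac32+\kappa\sqrt{2\pi t})$, and the resulting $\Vert\p_u\PSNOB f\Vert_\infty\le\Vert f\Vert_\infty[2\kappa+\tfrac52\sqrt{2/(\pi t)}]$. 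The one place where you quietly deviate from the literal statement is instructive: your bound has an unavoidable $t^{-1/2}$ factor, and you only recover the paper's constant $2\kappa+3\sqrt{2/\pi}$ after restricting to $t\ge1$. This is not a defect of your argument but of the corollary as stated --- for a merely bounded continuous $f$ (not Lipschitz), $\PSNOB f\to f$ as $t\to0^+$, so no $t$-independent Lipschitz bound in terms of $\Vert f\Vert_\infty$ alone can hold for all $t>0$; the paper's ``long but elementary calculations'' must also produce a $t$-dependent constant, and the $t$-uniform display is evidently a slip. Since the paper only uses the corollary to assert that $\PSNOB f$ (and $P_t f$) is Lipschitz for each fixed $t>0$, the slip is harmless, but you should state the Lipschitz bound with its actual $t$-dependence rather than silently restricting to $t\ge1$.
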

\begin{proof}
Proposition~\ref{prop23} allows to differentiate $\PSNOB f(u)$, which  allows to infer by long but elementary calculations that 
\begin{equation*}
\begin{split}
\Vert \p_u \PSNOB f\Vert_\infty &\; \leq\;\Vert f\Vert_\infty\Big[2\kappa+3\sqrt{\frac{2}{\pi}}\Big]\,,
\end{split}
\end{equation*}
implying that 
\begin{align*}
\Vert \PSNOB f\Vert_{\text{L}} \;\leq\; \Vert f\Vert_\infty\Big[2\kappa+3\sqrt{\frac{2}{\pi}}\Big]\,. 
\end{align*}
Noting that $\PSNOB f(u)=\bb E_u\big[f(\BSNOB_t)\big]$ is a contraction semigroup with respect to the supremum norm is enough to finish the proof.
\end{proof}
We remark that the well known H\"older continuity of  Brownian local times (see \cite[Corolary 1.8, page 226]{RevuzYor}) and \eqref{formula} may lead to continuity in space of $\PSNOB$. However, it would not lead to the Lipschitz property  above. This is  reasonable: more smoothness is expected when taking averages, which cannot be deduced from pathwise continuity.

\section{Local times}\label{s3}

In the proof of Theorem \ref{thm21} a joint $L^1$-Invariance Principle for the reflected Brownian motion and its local time (at zero) will be required, as well as some extra results about local times. This is the content of this section.

Recall that the local time of a Brownian motion $B$ at the point $u\in \bb R$ at time $t\geq 0$ is denoted here by $L(u,t)$.
 Denote by  $\{X_t:t\geq 0\}$  the   continuous-time symmetric simple random walk on $\bb Z$  starting from zero with jump rates $\lambda(x,y)=1/2$ if $|x-y|=1$ and zero otherwise, and let $\xi(x,t)=\int_{0}^{t} \one_{\{x\}}(X_s) ds$ be its  local time at $x\in \bb Z$. 

The following result shows that  the pair $(X_t, \xi(0,t))$ is close with high probability to the pair $(B_t, L(0,t))$.
\begin{proposition}[\cite{Csaki2009}, Lemma 5.6, and  \cite{Lawler}, Theorem 3.3.3]\label{lem:RWlocaltimeapprox}
There exists a probability space $(\Omega, \mc F,\bb P)$ such that one can define on it a continuous-time symmetric random walk $X_t$ on $\bb Z$  and a real valued Brownian motion  $\big\{B_t:t\geq 0\big\}$ such that there are positive constants $C_1=C_1(t)$ and $C_2=C_2(t)$ such that for any $\delta\in (0,\frac12)$, any $C>0$ any $n\geq 1$ and any $t\geq n^{-2}$ we have the estimate
\begin{equation}\label{eq:rwlocaltimeapprox}
\bb P\Big[\big| \xi(0,tn^2) -L(0,tn^2)\big| \geq 2t^{\frac14 +\delta}n^{\frac{1}{2}+2\delta} + C\log n\Big] \leq C_1\big(n^{\frac{1}{2} -\frac{\delta}{2}} e^{-C_2 n^{\delta}} + n^{1 +\delta -C}\big).
\end{equation}
Moreover, for the same coupling  there are constants $0 < c,a < \infty$ such that, for any $\delta \in (0,1/2]$ and any pair $(t,n)$ as above,
\begin{equation}\label{eq:rwapprox}
\bb P\Big[ \,\sup_{s\leq t} |X_{sn^2}-B_{sn^2}| \geq n^{\frac{1}{2}}\, \Big] \;\leq\; 
ce^{-an^\delta}\,.
\end{equation}
\end{proposition}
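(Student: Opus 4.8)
The statement to prove is Proposition~\ref{lem:RWlocaltimeapprox}, which is explicitly attributed to two external sources: Csáki et al.\ \cite{Csaki2009} (Lemma 5.6) and Lawler \cite{Lawler} (Theorem 3.3.3). So this is not a result the authors prove from scratch; it is a packaging of known strong-approximation results into a single coupling with quantitative tail bounds. My proof proposal therefore is to show how to assemble the two cited facts.

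The plan is to work on a single probability space supporting a Brownian motion $B$ and, via a Skorohod-type embedding, a continuous-time symmetric simple random walk $X$ on $\bb Z$. First I would set up the coupling between $X$ and $B$: the standard construction is to embed the discrete-time skeleton of the random walk into $B$ by successive exit times from unit intervals (a Skorohod embedding), and then to run the jumps at rate $1$ so that the continuous-time process $X_t$ matches. The strong approximation theorem of Lawler (or equivalently the KMT/Komlós–Major–Tusnády embedding adapted to continuous time) gives the exponential bound \eqref{eq:rwapprox}: $\sup_{s\le t}|X_{sn^2}-B_{sn^2}| \ge n^{1/2}$ has probability at most $ce^{-an^\delta}$. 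This is essentially a restatement; the only work is checking that the passage from the ballistic scale in \cite{Lawler} to the diffusive scale $n^2$ here introduces the stated constants, and that the bound is uniform in $t$ in the appropriate range (with constants depending on $t$ as indicated).

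Next, for the local time comparison \eqref{eq:rwlocaltimeapprox}, I would invoke Lemma 5.6 of \cite{Csaki2009}, which provides, on the same coupling, a rate of approximation of the Brownian local time $L(0,\cdot)$ at zero by the random walk local time $\xi(0,\cdot)$ (the occupation time of the origin). The key point is that the discrete local time accumulates at a rate governed by the excursion structure that is being embedded into $B$, so the two local times are close with an error controlled by the fluctuations of the embedding increments; the quantitative bound in \cite{Csaki2009} yields an error term of order $t^{1/4+\delta}n^{1/2+2\delta}$ plus a logarithmic correction $C\log n$, with the stated tail probability $C_1(n^{1/2-\delta/2}e^{-C_2 n^\delta}+n^{1+\delta-C})$. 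I would be careful here about two normalization issues: (i) the continuous-time local time $\xi(0,t)=\int_0^t \one_{\{0\}}(X_s)\,ds$ versus the discrete number-of-visits local time used in \cite{Csaki2009} (these differ by the holding times, whose fluctuations are themselves controlled by an elementary large-deviation estimate for sums of i.i.d.\ exponentials and absorbed into the $C\log n$ term), and (ii) the factor-of-$2$ and scaling conventions relating $L(0,tn^2)$ under our conventions to the normalization in the cited theorem.

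The main obstacle is not any deep probabilistic step — both ingredients are quoted — but rather the bookkeeping required to make the \emph{two} approximations hold \emph{simultaneously on the same coupling} with \emph{compatible} error terms, and to translate all constants and scalings from the ballistic conventions of \cite{Lawler,Csaki2009} to the diffusive scaling $tn^2$ used throughout this paper. Concretely, I would: (1) fix the Skorohod embedding once and for all; (2) record that \cite{Lawler} Theorem 3.3.3 applied to this embedding gives \eqref{eq:rwapprox} after rescaling time by $n^2$ and space by $n^{1/2}$ (note the threshold $n^{1/2}$ is exactly the square root of the time-rescaled spatial KMT error $O(\log(tn^2))$ times a safety factor, which is why the exponential rate is $e^{-an^\delta}$ rather than power-law); (3) record that \cite{Csaki2009} Lemma 5.6 applied to the \emph{same} embedding gives \eqref{eq:rwlocaltimeapprox}; (4) handle the continuous-vs-discrete local time conversion by a one-line Bernstein bound on the holding-time sum. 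Everything else is a matter of citing and relabeling, so I would keep the write-up short and point the reader to the original references for the detailed constructions.
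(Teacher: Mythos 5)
Your proposal takes essentially the same route as the paper: the authors do not re-derive this statement but cite \cite[Lemma~5.6]{Csaki2009} for \eqref{eq:rwlocaltimeapprox} and \cite[Theorem~3.3.3]{Lawler} for \eqref{eq:rwapprox}, adding only the one-sentence remark that the discrete-time result of \cite{Csaki2009} translates to continuous time via ``standard large deviations arguments for the number of jumps and holding times.'' Your write-up correctly identifies the two ingredients, the need to realize both bounds on a single coupling, and the discrete-to-continuous conversion via a large-deviation control of the holding times, so the approaches match. Two small imprecisions worth flagging: (i) your opening phrase ``via a Skorohod-type embedding'' is misleading, since a Skorohod embedding would only yield $n^{1/4}$-type rates rather than the stated exponential bound; what is actually used in both cited references is a KMT-type strong approximation (which you do mention as an alternative, but the two constructions are not equivalent); (ii) the error contributed by the random holding times at the origin is of order $\sqrt{n}$ rather than $\log n$, so it is absorbed into the dominant term $t^{1/4+\delta}n^{1/2+2\delta}$ of the threshold, not into the $C\log n$ correction as you suggest. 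Neither of these affects the validity of the overall strategy.
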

We note that \eqref{eq:rwlocaltimeapprox} was originally stated in \cite[Lemma 5.6]{Csaki2009} for the discrete time random walk. In order to translate it into the continuous setting one can apply standard large deviations arguments for the number of jumps and holding times of the continuous time random walk. Using Proposition~\ref{lem:RWlocaltimeapprox} above we deduce the following result.
\begin{proposition}
\label{prop:L1est}
There exists a probability space $(\Omega, \mc F,\bb P)$ such that one can define on it a continuous-time symmetric random walk $X_t$ on $\bb Z$  and a Brownian motion  $\big\{B_t:t\geq 0\big\}$ for which there is a constant $C>0$ such that, for any $\delta >0$, any $n\geq 1$ and any $t\geq n^{-2}$,
\begin{align}
&\E \Big[\, \Big|\frac{\xi(0,tn^2)}{n} -\frac{L(0,tn^2)}{n}\Big|\,\Big] \;\leq\; Cn^{-1/2 +\delta}\label{eq:localtimeL1}, \quad\text{and}\\
&\E\Big[ \frac1n\big|X_{tn^2}-B_{tn^2}\big|\Big] \;\leq\; Cn^{-1/2+\delta}.\label{eq:rwL1}
\end{align}
\end{proposition}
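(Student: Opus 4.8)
The plan is to deduce the two $L^1$ estimates in Proposition~\ref{prop:L1est} from the tail bounds of Proposition~\ref{lem:RWlocaltimeapprox} by the standard device $\E[|Z|] = \int_0^\infty \bb P[|Z| > r]\,dr$, working on the coupling probability space provided by that proposition. First I would treat \eqref{eq:rwL1}, which is the easier of the two: by \eqref{eq:rwapprox} the event $\{\sup_{s\le t}|X_{sn^2}-B_{sn^2}| \ge n^{1/2}\}$ has probability at most $c\,e^{-an^\delta}$, so on its complement $\frac1n|X_{tn^2}-B_{tn^2}| \le n^{-1/2}$. On the complementary (exceptional) event I would control the contribution by Cauchy--Schwarz: $\E[\frac1n|X_{tn^2}-B_{tn^2}|\,\one_{\text{exc}}] \le \frac1n(\E[(X_{tn^2}-B_{tn^2})^2])^{1/2}\bb P[\text{exc}]^{1/2}$. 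Since both $X_{tn^2}$ and $B_{tn^2}$ have second moments of order $n^2$ (the random walk run for time $tn^2$ has variance $\asymp tn^2$, and the Brownian motion likewise), the first factor is $O(1)$, while $\bb P[\text{exc}]^{1/2} \le \sqrt{c}\,e^{-\frac{a}{2}n^\delta}$ decays faster than any power of $n$. Adding the two contributions gives $\E[\frac1n|X_{tn^2}-B_{tn^2}|] \lesssim n^{-1/2} + (\text{super-polynomially small}) \le C n^{-1/2+\delta}$, absorbing constants depending on $t$ into $C$ (with the usual caveat that the bound is uniform in $n$ for fixed $t$).

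For \eqref{eq:localtimeL1} the argument has the same shape but uses \eqref{eq:rwlocaltimeapprox}. Fix $\delta>0$ (relabelling if necessary so that $\delta<\frac12$, which is harmless since a bound for small $\delta$ implies one for large $\delta$), and fix $C$ in \eqref{eq:rwlocaltimeapprox} large enough, say $C = 2+2\delta$, so that the right-hand side $C_1(n^{1/2-\delta/2}e^{-C_2 n^\delta} + n^{1+\delta-C}) = C_1(n^{1/2-\delta/2}e^{-C_2 n^\delta} + n^{-1-\delta})$ is summable and, more to the point, bounded by a constant times $n^{-1}$ (indeed by $n^{-1-\delta}$ up to the exponentially small term). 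Then on the complement of the bad event, $|\xi(0,tn^2) - L(0,tn^2)| \le 2t^{1/4+\delta}n^{1/2+2\delta} + C\log n \lesssim n^{1/2+3\delta}$ for $n$ large (with the constant depending on $t$), so after dividing by $n$ this contributes $\lesssim n^{-1/2+3\delta}$. On the bad event I would again use Cauchy--Schwarz: $\E[\frac1n|\xi(0,tn^2)-L(0,tn^2)|\,\one_{\text{bad}}] \le \frac1n(\E[(\xi(0,tn^2)-L(0,tn^2))^2])^{1/2}\bb P[\text{bad}]^{1/2}$. Here I need a crude second-moment bound on $\xi(0,tn^2)$ and $L(0,tn^2)$ of order $n^2$: for Brownian local time $L(0,tn^2)$ has the same law as $|B_{tn^2}|$ by Lévy's identity, hence second moment $\asymp tn^2$; for the random-walk occupation time $\xi(0,tn^2) = \int_0^{tn^2}\one_{\{0\}}(X_s)\,ds \le tn^2$ deterministically, which is even easier. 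So the first factor is $O(n)$, i.e. $\frac1n \times O(n) = O(1)$, and $\bb P[\text{bad}]^{1/2} \lesssim n^{-1/2}$ (in fact better), giving a contribution $\lesssim n^{-1/2}$. Combining, $\E[\frac1n|\xi(0,tn^2)-L(0,tn^2)|] \lesssim n^{-1/2+3\delta}$, and since $\delta>0$ was arbitrary we may rename $3\delta$ as $\delta$ to get exactly \eqref{eq:localtimeL1}.

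A minor point to handle carefully is the hypothesis $t \ge n^{-2}$ in Proposition~\ref{lem:RWlocaltimeapprox}: for the regime $t < n^{-2}$ one cannot invoke \eqref{eq:rwlocaltimeapprox} directly, but there $tn^2 < 1$, so both $\xi(0,tn^2)$ and $L(0,tn^2)$ are tiny and can be bounded trivially — $\xi(0,tn^2) \le tn^2 < 1$ and $\E[L(0,tn^2)] = \E|B_{tn^2}| = \sqrt{2tn^2/\pi} < 1$ — so $\E[\frac1n|\xi(0,tn^2)-L(0,tn^2)|] \le \frac2n \le 2n^{-1/2+\delta}$; similarly for \eqref{eq:rwL1}. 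I would fold this into a one-line remark rather than belabouring it, since the interesting case is $t\ge n^{-2}$.

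The main obstacle, such as it is, is not conceptual but bookkeeping: one must choose the free parameter $C$ in \eqref{eq:rwlocaltimeapprox} large enough that the tail probability is $o(n^{-1/2})$ after multiplying by the order-$n$ second-moment factor from Cauchy--Schwarz, while simultaneously keeping the ``good-event'' deterministic bound $2t^{1/4+\delta}n^{1/2+2\delta}+C\log n$ of order $n^{1/2+o(1)}$. These two requirements are easily compatible because $\log n$ is negligible against $n^{2\delta}$ and any fixed power $n^{-1-\delta}$ beats $n^{-1/2}$; the only thing to watch is that the constant $C_1$, the choice of $C$, and the implicit constants all depend on $t$ but never on $n$, which is exactly the uniformity the proposition claims. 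I expect the entire argument to be about half a page once written out.
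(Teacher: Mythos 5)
Your proposal follows the same overall strategy as the paper's proof: work on the coupling probability space of Proposition~\ref{lem:RWlocaltimeapprox}, split the expectation into a ``good'' event (where the tail bound applies) and a ``bad'' event, bound the good event deterministically, and control the bad event by Cauchy--Schwarz.

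There is, however, a genuine gap in your treatment of \eqref{eq:localtimeL1}. You claim that the deterministic bound $\xi(0,tn^2) \le tn^2$ gives ``a crude second-moment bound on $\xi(0,tn^2)$ of order $n^2$.'' It does not: it gives $\E[\xi(0,tn^2)^2]\le t^2n^4$, which is $O(n^4)$, not $O(n^2)$. Consequently $\frac{1}{n}\E[(\xi-L)^2]^{1/2}$ is $O(n)$ under your bound, not $O(1)$ as you assert. Combined with your chosen $C=2+2\delta$ (which only gives $\bb P[\text{bad}]\lesssim n^{-1-\delta}$), the Cauchy--Schwarz bound on the bad event is of order $n\cdot n^{-(1+\delta)/2}=n^{(1-\delta)/2}$, which diverges. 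To repair this you must do one of two things: either (i) prove the correct second-moment bound $\E[\xi(0,tn^2)^2]=O(n^2)$ (equivalently, $\xi(0,tn^2)/n$ is bounded in $L^2$), which is what the paper does via the local central limit theorem (adapting the computation in Proposition~\ref{prop:localtimecontinuity}), or (ii) exploit that $C$ in \eqref{eq:rwlocaltimeapprox} is a free parameter and take it large enough, say $C\geq 4+\delta$, so that $\bb P[\text{bad}]^{1/2}\lesssim n^{-3/2}$ can absorb your extra factor of $n$. Your treatment of \eqref{eq:rwL1} is fine, since there the relevant second moments really are $O(n^2)$ ($\E[X_{tn^2}^2]=\E[B_{tn^2}^2]=tn^2$), and the exponential tail in \eqref{eq:rwapprox} makes the choice of constants immaterial. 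Your use of L\'evy's identity for the $L^2$ bound on Brownian local time is a cleaner alternative to the paper's scaling-plus-It\^o-isometry argument.
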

\begin{proof}
We only prove \eqref{eq:localtimeL1} since the proof of \eqref{eq:rwL1} follows the same lines of reasoning.	
We use the abbreviation 
\begin{equation*}
A_n \;=\; \frac{\xi(0,tn^2)}{n} -\frac{L(0,tn^2)}{n}\,.
\end{equation*}
Let $\delta\in (0,1/2)$. We now write
\begin{equation*}
\begin{aligned}
\E \big[\, |A_n|\,\big] 
&\;=\; \E\big[\, |A_n|\, \mathds{1}_{\{|A_n|\leq 3t^{\frac14 +\delta}n^{-\frac{1}{2}+2\delta}\}}\,\big] +
 +\E \big[\,|A_n|\, \mathds{1}_{\{|A_n|>3t^{\frac14 +\delta} n^{-\frac{1}{2}+2\delta}\}}\,
\big] \,.
\end{aligned}
\end{equation*}
The first term on the right hand side of above is bounded by  $3t^{\frac14 +\delta}n^{-\frac{1}{2}+2\delta}$. To bound the second term we apply the Cauchy-Schwarz inequality to see that
\begin{equation*}
\E \big[\, |A_n|\,\mathds{1}_{\{|A_n|> 3t^{\frac14 +\delta} n^{-\frac{1}{2}+2\delta}\}}\,\big] \;\leq\; \E \big[\,|A_n|^2\,\big]^{\frac{1}{2}} \;\bb P\big[\,|A_n|> 3t^{\frac14 +\delta} n^{-\frac{1}{2}+2\delta}\,\big]^{\frac{1}{2}}.
\end{equation*}
A direct calculation involving the usual local central limit theorem (see for instance \cite[Theorem 2.5.6]{Lawler}) shows that the $L^2$-norm of $\xi(0,tn^2)/n$ is bounded in $n$ (the interested reader may easily adapt the proof of Proposition~\ref{prop:localtimecontinuity} to that end). 

To assure that the same $L^2$-boundedness holds true for $L(0,tn^2)/n$, it is sufficient to  note that the laws of $L(0,tn^2)/n$ and $L(0,t)$ are identical, and  then to apply It\^{o}'s isometry. Recalling  Proposition \ref{lem:RWlocaltimeapprox}  concludes the proof.
\end{proof}

The next step is to adapt  the result above to the context of the reflected random walk  and the reflected Brownian motion. For an illustration of the (continuous-time) reflected random walk $\{\XREF: t\geq 0 \}$, see Figure~\ref{fig1}.
\begin{figure}[!htb]
\centering
\begin{tikzpicture}
\centerarc[thick,->](3.5,-0.3)(-10:-170:0.45);
\centerarc[thick,<-](3.5,0.3)(10:170:0.45);
\centerarc[thick,->](4.5,-0.3)(-10:-170:0.45);
\centerarc[thick,<-](4.5,0.3)(10:170:0.45);
\centerarc[thick,->](5.5,-0.3)(-10:-170:0.45);
\centerarc[thick,<-](5.5,0.3)(10:170:0.45);
\centerarc[thick,->](6.5,-0.3)(-10:-170:0.45);
\centerarc[thick,<-](6.5,0.3)(10:170:0.45);

\draw (3,0) -- (10,0);

\shade[ball color=black](6,0) circle (0.25);

\filldraw[fill=white, draw=black]
(3,0) circle (.25)
(4,0) circle (.25)
(5,0) circle (.25)
(7,0) circle (.25)
(8,0) circle (.25)
(9,0) circle (.25)
;

\draw (3.3,-0.05) node[anchor=north] {\small $\bf 0$};
\draw (4.3,-0.05) node[anchor=north] {\small $\bf 1$};
\draw (5.3,-0.05) node[anchor=north] {\small $\bf 2$};
\draw (6.3,-0.05) node[anchor=north] {\small $\bf 3$};
\draw (7.3,-0.05) node[anchor=north] {\small $\bf 4$};
\draw (3.5,0.8) node[anchor=south]{$\frac{1}{2}$};
\draw (4.5,-0.8) node[anchor=north]{$\frac{1}{2}$};
\draw (4.5,0.8) node[anchor=south]{$\frac{1}{2}$};
\draw (5.5,-0.8) node[anchor=north]{$\frac{1}{2}$};
\draw (5.5,0.8) node[anchor=south]{$\frac{1}{2}$};
\draw (3.5,-0.8) node[anchor=north]{$\frac{1}{2}$};
\draw (6.5,-0.8) node[anchor=north]{$\frac{1}{2}$};
\draw (6.5,0.8) node[anchor=south]{$\frac{1}{2}$};
\end{tikzpicture}
\caption{Reflected random walk on $\{0,1,2,\ldots\}$. All  jump rates are equal to one half.}\label{fig1}
\end{figure}
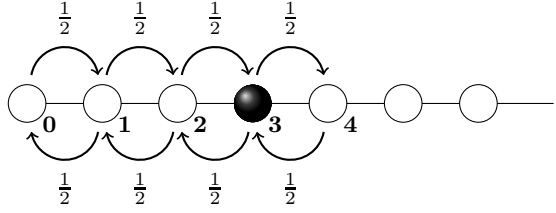

We recall below the notion of  \textit{projection} for continuous-time  Markov chains, also called \textit{lumping} in the literature.
\begin{proposition}[\cite{efgnt}]\label{prop:lumping}
Let $\mc E$ be a countable set, and consider a bounded function $\zeta:\mc E\times \mc E\to[0,\infty)$. Let $(\ZZ_t)_{t\geq 0}$ be the continuous time Markov chain with state space $\mc E$  and jump rates $\{\zeta(x,y)\}_{x,y\in \Omega}$. 
Fix an equivalence relation $\sim $ on $\mc E$ with equivalence classes $\mc E^{\sharp}=\{[x]:\, x\in \mc E\}$ and assume that, for any $y\in \mc E$, 
\begin{align}\label{eq46}
\sum_{y'\sim y} \zeta (x,y')\;=\; \sum_{y'\sim y} \zeta (x', y')
\end{align}
whenever $x\sim x'$. Then, $\big([\ZZ_t]\big)_{t\geq 0}$ is a Markov chain with state space $\mc E^{\sharp}$  and jump rates $\zeta ([x], [y])= \sum_{y'\sim y} \zeta (x,y')$.
\end{proposition}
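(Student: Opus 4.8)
The plan is to argue at the level of generators: I will show that the generator $\mathsf L$ of $(\ZZ_t)_{t\ge0}$ maps functions that are constant on the equivalence classes to functions that are constant on the equivalence classes, and that the induced operator on $\mc E^\sharp$ is the generator of the chain with rates $\zeta([x],[y])$. Write $\pi\colon\mc E\to\mc E^\sharp$, $\pi(x)=[x]$, and for bounded $g\colon\mc E^\sharp\to\bb R$ let $\tilde g:=g\circ\pi$ be its lift to $\mc E$. First I would extract from \eqref{eq46} the two facts that make everything well posed: choosing $y$ in the class $[x]$ and summing \eqref{eq46} over all classes shows that the total rate $\lambda(x):=\sum_{y}\zeta(x,y)$ depends only on $[x]$; and, since $\{y'\colon y'\sim y\}$ is exactly the class $[y]$, the quantity $\zeta([x],[y]):=\sum_{y'\sim y}\zeta(x,y')$ does not depend on the chosen representatives, hence genuinely defines a rate function on $\mc E^\sharp\times\mc E^\sharp$. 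Moreover $\sum_{[z]}\zeta([x],[z])=\lambda(x)$ is uniformly bounded by the standing boundedness hypothesis on $\zeta$, so $\zeta([x],[y])$ defines a (non-explosive) continuous-time Markov chain on the countable set $\mc E^\sharp$; denote its generator by $\mathsf L^\sharp$.

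The computational core is the identity $\mathsf L\tilde g=\widetilde{\mathsf L^\sharp g}$ for every bounded $g$. Since $\sum_y\zeta(x,y)\,|\tilde g(y)-\tilde g(x)|\le 2\|g\|_\infty\lambda(x)<\infty$, the defining sum converges absolutely and may be regrouped by equivalence classes:
\[
\mathsf L\tilde g(x)\;=\;\sum_{y}\zeta(x,y)\big[\tilde g(y)-\tilde g(x)\big]\;=\;\sum_{[z]\in\mc E^\sharp}\Big(\sum_{y'\sim z}\zeta(x,y')\Big)\big[g([z])-g([x])\big]\;=\;\sum_{[z]}\zeta([x],[z])\big[g([z])-g([x])\big],
\]
and by the previous paragraph the right-hand side depends on $x$ only through $[x]$ and equals $\mathsf L^\sharp g([x])$. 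Both $\mathsf L$ and $\mathsf L^\sharp$ are bounded operators on the corresponding spaces of bounded functions (with norm at most $2\sup_x\lambda(x)$), so the transition semigroups are the norm-convergent exponentials $P_t=e^{t\mathsf L}$ and $P^\sharp_t=e^{t\mathsf L^\sharp}$; iterating the identity yields $\mathsf L^k\tilde g=\widetilde{(\mathsf L^\sharp)^k g}$ for all $k\ge0$, whence $P_t\tilde g=\widetilde{P^\sharp_t g}$ for all $t\ge0$.

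It then remains to push this through to the trajectories. For $0\le s\le t$ and bounded $g\colon\mc E^\sharp\to\bb R$, the Markov property of $(\ZZ_t)_{t\ge0}$ gives
\[
\bb E\big[g([\ZZ_t])\,\big|\,\sigma(\ZZ_r\colon r\le s)\big]\;=\;\big(P_{t-s}\tilde g\big)(\ZZ_s)\;=\;\big(P^\sharp_{t-s}g\big)([\ZZ_s]).
\]
The right-hand side is a function of $[\ZZ_s]$ alone, hence measurable with respect to the sub-$\sigma$-algebra $\sigma([\ZZ_r]\colon r\le s)$; conditioning the left-hand side further on this smaller $\sigma$-algebra and invoking the tower property therefore leaves the identity unchanged. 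Consequently $([\ZZ_t])_{t\ge0}$ is a Markov process with semigroup $(P^\sharp_t)_{t\ge0}=e^{t\mathsf L^\sharp}$, i.e.\ the continuous-time Markov chain on $\mc E^\sharp$ with jump rates $\zeta([x],[y])$, which is the assertion.

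As for where the difficulty lies: there is essentially none of analytic substance here --- the statement is a bookkeeping exercise whose only real content is the observation that hypothesis \eqref{eq46} is exactly what is needed for $\mathsf L$ to preserve the space of class-constant functions. The two points deserving a moment of care are the legitimacy of reordering the summation (immediate since $\zeta\ge0$ and $\lambda(x)<\infty$) and the fact that the quotient rates still define a bona fide, non-explosive chain (immediate from the boundedness of $\zeta$). An alternative route avoiding semigroups goes through the embedded jump chain: \eqref{eq46} forces $\lambda$ and the lumped one-step kernel $\zeta([x],\cdot)/\lambda(x)$ to be class-constant, so the exit rate and the exit-destination law of $(\ZZ_t)_{t\ge0}$ from a class do not depend on the internal state; memorylessness of the exponential holding times then shows that the time $([\ZZ_t])_{t\ge0}$ spends in a class is itself exponential with the correct parameter and independent of where it jumps next, giving the same conclusion --- at the cost of a slightly more delicate use of the strong Markov property.
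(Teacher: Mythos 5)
Your proof is correct. The paper itself does not prove this proposition --- it cites \cite{efgnt} for it --- so there is no in-paper argument to compare against; your generator/semigroup route is the standard strong-lumpability argument (Burke--Rosenblatt), and the two points you flag as needing care (absolute convergence justifying the regrouping of the sum by classes, and the tower-property step passing from $\sigma(\ZZ_r:r\le s)$ to the coarser $\sigma([\ZZ_r]:r\le s)$) are exactly the right ones. One small imprecision: boundedness of $\zeta$ as a function on $\mc E\times\mc E$ does not by itself yield $\sup_x\lambda(x)<\infty$ when $\mc E$ is infinite, so the appeal to a ``norm-convergent exponential'' implicitly uses bounded row sums (or finitely many jumps per site); this holds in the paper's application ($\mc E=\Z$ with nearest-neighbour rates) and is surely part of the hypotheses in \cite{efgnt}, but as stated you should either assume $\sup_x\lambda(x)<\infty$ explicitly or fall back to your alternative jump-chain argument, which does not need it.
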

Consider now the following equivalence relation on $\bb Z$. We will say that $x\sim y$ if, and only if,
\begin{equation*}
x\;=\;y\quad \text{ or } \quad x\;=\; -y-1\,.
\end{equation*}
The equivalence classes of $\bb Z/\!\!\!\! \sim$ will be  therefore $\{-1,0\},\{-2,1\},\{-3,2\},\ldots$ Then, assuming that $\ZZ_t$ is the   continuous-time symmetric slow bond random walk $\XSB_t$ on $\Z$, Proposition~\ref{prop:lumping} tell us that  the projected Markov chain $[\XSB_t]$ has the rates of the reflected random walk $\XREF$, see Figure~\ref{fig2}.
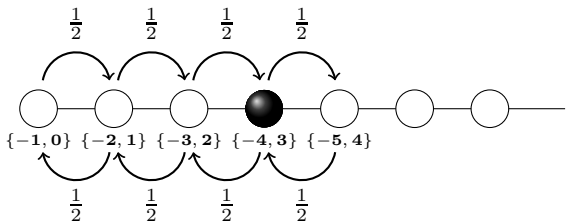
\begin{figure}[H]
\centering
\begin{tikzpicture}
\centerarc[thick,->](3.5,-0.5)(-10:-170:0.45);
\centerarc[thick,<-](3.5,0.3)(10:170:0.45);
\centerarc[thick,->](4.5,-0.5)(-10:-170:0.45);
\centerarc[thick,<-](4.5,0.3)(10:170:0.45);
\centerarc[thick,->](5.5,-0.5)(-10:-170:0.45);
\centerarc[thick,<-](5.5,0.3)(10:170:0.45);
\centerarc[thick,->](6.5,-0.5)(-10:-170:0.45);
\centerarc[thick,<-](6.5,0.3)(10:170:0.45);

\draw (3,0) -- (10,0);

\shade[ball color=black](6,0) circle (0.25);

\filldraw[fill=white, draw=black]
(3,0) circle (.25)
(4,0) circle (.25)
(5,0) circle (.25)
(7,0) circle (.25)
(8,0) circle (.25)
(9,0) circle (.25)
;

\draw (3,-0.2) node[anchor=north] {\tiny $\{\bf{-1},\bf 0\}$};
\draw (4,-0.2) node[anchor=north] {\tiny $\{\bf{-2},\bf 1\}$};
\draw (5,-0.2) node[anchor=north] {\tiny $\{\bf{-3},\bf 2\}$};
\draw (6,-0.2)  node[anchor=north] {\tiny $\{\bf{-4},\bf 3\}$};
\draw (7,-0.2)  node[anchor=north]{\tiny $\{\bf{-5},\bf 4\}$};
\draw (3.5,0.8) node[anchor=south]{$\frac{1}{2}$};
\draw (4.5,-0.95) node[anchor=north]{$\frac{1}{2}$};
\draw (4.5,0.8) node[anchor=south]{$\frac{1}{2}$};
\draw (5.5,-0.95) node[anchor=north]{$\frac{1}{2}$};
\draw (5.5,0.8) node[anchor=south]{$\frac{1}{2}$};
\draw (3.5,-0.95) node[anchor=north]{$\frac{1}{2}$};
\draw (6.5,-0.95) node[anchor=north]{$\frac{1}{2}$};
\draw (6.5,0.8) node[anchor=south]{$\frac{1}{2}$};
\end{tikzpicture}
\caption{Projected Markov chain $[\XSB_t]$ on the state space $\Omega=\bb Z/\!\!\sim$\,. All  jump rates are equal to one half.}\label{fig2}
\end{figure}
Therefore, based on the construction above, we deduce that the local time at zero of the reflected random walk is almost surely equal  to local time of the usual random walk on the set $\{-1,0\}$ (in this coupling). 
\begin{remark}\rm
Note that the usual symmetric continuous time random walk on $\bb Z$ is a particular case of $\XSB_t$ taking $\beta=0$.
\end{remark}
\begin{remark}
\rm In the discrete time setting, it is true that the modulus of the symmetric random walk is the reflected random walk. However, the same does not hold in the continuous time setting, due to the fact that the waiting time at zero would be doubled when taking the modulus. This explains the choice of  the equivalence relation above, which uses symmetry around the point~$-1/2$.
\end{remark}

The next result is quite intuitive, but not so immediate to prove: the times spent by the usual random walk at  sites  $-1$ and $0$ are very close.
\begin{proposition}\label{prop:localtimecontinuity}
Uniformly on $x\in \bb Z$, we have the estimate
\begin{equation}\label{l2}
\E_x\Big [\,\Big(\frac{\xi(0,tn^2)}{n} - \frac{\xi(-1,tn^2)}{n}\Big)^2\,\Big] \;\lesssim\; \frac{1}{n}\,.
\end{equation}
In particular,
\begin{equation}\label{l1}
\E_x\Big [\,\Big|\frac{\xi(0,tn^2)}{n} - \frac{\xi(-1,tn^2)}{n}\Big|\,\Big] \;\lesssim\; \frac{1}{\sqrt{n}}\,.
\end{equation}
\end{proposition}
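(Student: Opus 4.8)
The plan is to estimate the second moment \eqref{l2} and then derive \eqref{l1} by Jensen's (or Cauchy--Schwarz) inequality, exactly as the statement suggests. The starting point is to expand the square:
\[
\E_x\Big[\Big(\tfrac{\xi(0,tn^2)}{n}-\tfrac{\xi(-1,tn^2)}{n}\Big)^2\Big]
=\tfrac{1}{n^2}\Big(\E_x[\xi(0,tn^2)^2]+\E_x[\xi(-1,tn^2)^2]-2\,\E_x[\xi(0,tn^2)\xi(-1,tn^2)]\Big).
\]
Each term can be written as a double time integral: writing $\xi(y,T)=\int_0^T\one_{\{y\}}(X_s)\,ds$, one has
\[
\E_x[\xi(a,T)\xi(b,T)]=\int_0^T\!\!\int_0^T \PP_x[X_s=a,\,X_r=b]\,ds\,dr
=2\int_0^T\!\!\int_0^r \PP_x[X_r=b]\,p_{r-s}^{}(b,a)\,ds\,dr+(a\leftrightarrow b),
\]
using the Markov property, where $p_u(b,a)=\PP_b[X_u=a]$ is the continuous-time transition kernel. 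So the whole expression reduces to controlling, for the symmetric continuous-time walk, the quantity
\[
\tfrac{1}{n^2}\int_0^{tn^2}\!\!\int_0^{tn^2}\sum_{a,b\in\{-1,0\}}(-1)^{\one[a\neq b]}\PP_x[X_s=a,X_r=b]\,ds\,dr,
\]
and the point is that the difference $\PP_x[X_r=0]-\PP_x[X_r=-1]$, and more importantly $p_u(0,0)-p_u(0,-1)$, $p_u(-1,0)-p_u(-1,-1)$, are small: by the local central limit theorem (\cite[Theorem 2.5.6]{Lawler}) $p_u(0,0)\sim (\pi u)^{-1/2}$ while $p_u(0,0)-p_u(0,-1)=O(u^{-3/2})$ for $u$ bounded away from $0$, and for small $u$ the two are each $O(1)$ and the time integral over an $O(1)$ window costs only $O(1)$, which after the $1/n^2$ prefactor and one free time integration of length $tn^2$ gives $O(1/n)$.

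Concretely, I would organize the estimate as follows. First, combine the four terms into
\[
\tfrac{1}{n^2}\E_x\big[(\xi(0,tn^2)-\xi(-1,tn^2))^2\big]
=\tfrac{2}{n^2}\int_0^{tn^2}\!\!\!\int_0^{r}\!\Big(\PP_x[X_r=0]-\PP_x[X_r=-1]\Big)\big(q_{r-s}(0)-q_{r-s}(-1)\big)\,ds\,dr,
\]
where $q_u(y):=\PP_0[X_u=y]$ and I have used translation invariance and the symmetry $p_u(a,b)=p_u(b,a)=q_u(b-a)$ together with the symmetry of $\{-1,0\}$ about $-1/2$ (so that $q_u(0)-q_u(-1)=q_u(0)-q_u(1)$). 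The inner factor $q_u(0)-q_u(1)$ is nonnegative, bounded by $1$, bounded by $C u^{-3/2}$ for $u\geq 1$ via the LCLT with error control, and integrable: $\int_0^{\infty}(q_u(0)-q_u(1))\,du<\infty$ — indeed this integral is explicitly computable for the symmetric walk and finite. Likewise $|\PP_x[X_r=0]-\PP_x[X_r=-1]|\leq 1$ and is $\leq C r^{-3/2}$ uniformly in the starting point $x$ for $r\geq 1$. Splitting the $s$-integral, the piece where $r-s$ ranges over $[0,r]$ contributes $\int_0^r(q_u(0)-q_u(1))\,du\leq C'$ uniformly, so the double integral is bounded by $\tfrac{C}{n^2}\int_0^{tn^2}\min\{1, C r^{-3/2}\}\cdot C'\,dr \leq \tfrac{C''}{n^2}\cdot(tn^2)^{?}$ — wait, one must be careful: $\int_0^{tn^2} r^{-3/2}dr$ converges at infinity but diverges at $0$, so split at $r=1$: the part $r\in[0,1]$ gives $O(1/n^2)$, and the part $r\in[1,tn^2]$ gives $\tfrac{C}{n^2}\int_1^\infty r^{-3/2}dr=O(1/n^2)$. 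That would give $O(1/n^2)$, which is even better than claimed, so I must be losing something — the resolution is that the uniform-in-$x$ bound $|\PP_x[X_r=0]-\PP_x[X_r=-1]|\leq C r^{-3/2}$ fails for $x$ very far away (at distance $\asymp r$ the local CLT gives Gaussian decay but the \emph{difference} is only smaller by a factor $x/r$, which near $x\asymp \sqrt r$ is $O(r^{-1})$ not $O(r^{-3/2})$); carrying the correct $x$-dependence and then integrating the $\sqrt{r}$-worth of relevant $r$ yields the honest $1/n$. So the careful bookkeeping of the difference of heat kernels, uniformly in the (possibly distant) starting point, is where the real work is.

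The main obstacle, then, is precisely this uniform-in-$x$ control of $\PP_x[X_r=0]-\PP_x[X_r=-1]$: one needs a local-CLT statement with an error term good enough to see that this difference is the discrete gradient of the heat kernel, of size $\lesssim \min\{1,\, r^{-1}\}$ uniformly in $x$ for $r\geq 1$ (Gaussian-type tails in $x$ for $|x|\gg\sqrt r$), and then to show the resulting double time integral is $O(1/n)$ and not smaller in the worst case. Everything else — the expansion into double integrals via the Markov property, the reduction using the symmetry around $-1/2$ that is already exploited in the lumping construction of Figure~\ref{fig2}, the $L^2$-boundedness facts already invoked in the proof of Proposition~\ref{prop:L1est}, and the final passage \eqref{l2}$\Rightarrow$\eqref{l1} by Cauchy--Schwarz — is routine. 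An alternative, possibly cleaner, route to \eqref{l2} is to use the lumping: under the coupling, $\xi(0,\cdot)+\xi(-1,\cdot)$ is the local time at the state $\{-1,0\}$ of the reflected walk, while $\xi(0,\cdot)-\xi(-1,\cdot)$ is a martingale-like additive functional whose bracket grows like the occupation time near the origin, i.e. like $\sqrt{tn^2}$; dividing by $n^2$ gives $O(1/n)$. I would present the direct heat-kernel computation as the primary argument and mention the lumping heuristic as motivation.
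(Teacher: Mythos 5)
There is a genuine algebraic error at the heart of your computation, and it is worth pinpointing because it is exactly what produced the paradox you noticed. When you expand $\E_x[(\xi(0,T)-\xi(-1,T))^2]$ as a double time integral over $\{s<r\}$ and apply the Markov property at the earlier time $s$, the four terms combine (using translation invariance and the left–right symmetry $q_u(-1)=q_u(1)$) into
\[
\big(\PP_x[X_s=0]+\PP_x[X_s=-1]\big)\,\big(q_{r-s}(0)-q_{r-s}(1)\big),
\]
that is, a \emph{sum} at the early time times a \emph{difference} in the transition factor. Your displayed integrand $\big(\PP_x[X_r=0]-\PP_x[X_r=-1]\big)\big(q_{r-s}(0)-q_{r-s}(-1)\big)$ has two problems: the $x$-dependent factor should sit at the early time $s$, not the late time $r$ (that is where the Markov property lets you split), and that factor is a sum, not a difference. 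It is precisely because the first factor is a sum that it is only of order $s^{-1/2}$ rather than $s^{-3/2}$, and integrating $s^{-1/2}$ over $[1,tn^2]$ costs the extra factor of $n$ that turns your putative $O(1/n^2)$ into the honest $O(1/n)$. Once you see this, the "resolution" you proposed — a breakdown of uniformity in $x$ for the heat-kernel difference — is a red herring: in the correct decomposition the $x$-dependence lives entirely in the sum factor, which is uniformly $\lesssim\min\{1,s^{-1/2}\}$, so there is no uniformity issue at all.

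The paper's proof takes essentially the route you intended but gets the bookkeeping right, and it adds one clean reduction you omitted: it first observes that $f(x):=\E_x[(\xi(0,T)-\xi(-1,T))^2]$ satisfies $f(x)\le f(0)=f(-1)$ (before hitting $\{-1,0\}$ both local times are zero, and the two boundary points are symmetric about $-1/2$), so one may assume $x=0$ without loss of generality. This makes the local CLT application to the sum factor $\PP_0[X_{s_2n^2}=-1]+\PP_0[X_{s_2n^2}=0]$ immediate, since the walk starts at the origin. After that, the estimate is exactly the sum-times-difference bound sketched above, carried out by splitting the $(s_1,s_2)$-integration region into four pieces to respect the validity range $t\gtrsim|x|$ of the LCLT estimates near the diagonal and near $s_2=0$. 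Your final "lumping/martingale" heuristic does point at the right order of magnitude ($D_{tn^2}\sim\sqrt{n}$ because the increments between successive visits to $\{-1,0\}$ roughly cancel), but $\xi(0,\cdot)-\xi(-1,\cdot)$ is an additive functional, not a martingale, so that route would require an extra decomposition (Poisson equation or Kipnis–Varadhan) to be made rigorous; the direct heat-kernel computation, done with the correct integrand, is simpler.
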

\begin{proof}  First of all,  observe that the function
\begin{align*}
f(x)\;=\;\E_x\Big [\,\Big(\frac{\xi(0,tn^2)}{n} - \frac{\xi(-1,tn^2)}{n}\Big)^2\,\Big]
\end{align*}
is such that $f(x)\leq f(0)=f(1)$ for any $x\in \bb Z$. The reason is simple: while the random walk does not reach $0$ nor $-1$, both local times above stay null, which gives the inequality, while the equality  is due to symmetry. Hence, let us assume without loss of generality that $x=0$. Applying  the definition of the local time, a change of variables and symmetry, we obtain that
\begin{align*}
& \E_0\Big [\,\Big(\frac{\xi(0,tn^2)}{n} - \frac{\xi(-1,tn^2)}{n}\Big)^2\,\Big]=  n^2 \E_0\bigg[\Big(\!\int_{0}^{t}\!\!\!\big(\mathds{1}\{X_{sn^2}\!=\!-1\}-\mathds{1}\{X_{sn^2}\!=\!0\}\big)ds\Big)^2\bigg]\\
&=\; 2n^2\,\E_0\bigg[\int_{0}^{t}\!\!ds_1\int_{0}^{s_1}\!\!ds_2\bigg(\mathds{1}\{X_{s_1n^2}=X_{s_2n^2}=-1\}-\mathds{1}\{X_{s_1n^2}=0\,,\,X_{s_2n^2=-1}\} \\ 
&-\mathds{1}\{X_{s_1n^2}=-1\,,\,X_{s_2n^2=0}\}+\mathds{1}\{X_{s_1n^2}=X_{s_2n^2}=0\}\bigg)\bigg]\,.
\end{align*}
Interchanging expectation and integrals and applying the Markov property, the above becomes
\begin{align}
&2n^2\int_{0}^{t}ds_1\int_{0}^{s_1}ds_2 \,\Big(\PP_0\big[X_{s_1n^2}=X_{s_2n^2}=-1\big]-\PP_0\big[X_{s_1n^2}=0\,,\,X_{s_2n^2}=-1\big] \notag\\ 
&- \PP_0\big[X_{s_1n^2}=-1\,,\,X_{s_2n^2}=0\big]+\PP_0\big[X_{s_1n^2}=X_{s_2n^2}=0\big]\Big)
\notag\\
&=\; 2n^2\int_{0}^{t}ds_1\int_{0}^{s_1}ds_2\,\Big(\PP_0\big[X_{s_2n^2}=-1]\cdot \PP_{-1}[X_{(s_1-s_2)n^2}=-1\big]\notag \\
&\hspace{3.7cm}-\PP_0[X_{s_2n^2}=-1]\cdot\PP_{-1}\big[X_{(s_1-s_2)n^2}=0\big]\notag\\
&\hspace{3.7cm}-\PP_0\big[X_{s_2n^2}=0]\cdot\PP_{0}[X_{(s_1-s_2)n^2}=-1\big]\notag\\
&\hspace{3.7cm} + \PP_0\big[X_{s_2n^2}=0\big]\cdot \PP_0\big[X_{(s_1-s_2)n^2}=0\big]\Big)\,.\label{eq4.6a}
\end{align}
By symmetry and translation invariance  of the random walk, the integrand above can be rewritten simply as
\begin{align}
&\Big(\PP_0\big[X_{s_2n^2}=-1\big]+\PP_0\big[X_{s_2n^2}=0\big]\Big)\cdot \Big(\PP_0\big[X_{(s_1-s_2)n^2}=0\big]-\PP_0\big[X_{(s_1-s_2)n^2}=1\big]\Big)\notag \\
&=:\; {\bf F}\big(s_2n^2,(s_1-s_2)n^2\big)\;=\;{\bf F} \,.\label{eqsplit}
\end{align}
\begin{figure}[H]
\centering
\begin{tikzpicture}
\begin{scope}[scale=1]
\fill[fill=black!15!white] (0,0)--(5,0)--(5,5)--cycle;
\draw (1,0.5) node{$\bf{B}$};
\draw (4,2) node{$\bf{A}$};
\draw (4,0.5) node{$\bf{C}$};
\draw (4,3.5) node{$\bf{D}$};
\draw (1,0)--(5,4);
\draw (1,1)--(5,1);
\draw (5,0)--(5,5);
\draw (0,0)--(5,5);
\draw[dashed] (0,1)--(1,1);
\draw[->] (-0.5,0)--(5.5,0) node[below]{$s_1$};
\draw[->] (0,-0.5)--(0,5.5) node[left]{$s_2$};
\draw[dashed] (0,5)--(5,5);
\draw (0,1) node[left]{$\frac{2}{n^2}$};
\draw (0,5) node[left]{$t$};
\draw (1,0) node[below]{$\frac{2}{n^2}$};
\draw (5,0) node[below]{$t$};
\end{scope}
\end{tikzpicture}
\caption{Region of integration (in gray) divided into   $\bf{A}$, $\bf{B}$, $\bf{C}$ and $\bf{D}$.}
\label{fig3}
\end{figure}
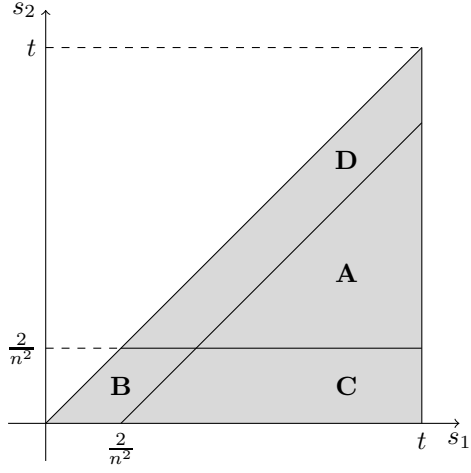
We make now some considerations on how to estimate each factor in \eqref{eqsplit}. Let
\begin{equation*}
p_t(x)\;\overset{\text{def}}{=}\;\PP_0\big[X_{t}=x\big] \qquad \text{and} \qquad
K_t(x)\;\overset{\text{def}}{=}\;\frac{e^{-x^2/2t}}{\sqrt{2\pi t}}\,.
\end{equation*}
By the Local Central Limit Theorem (see \cite[Theorem 2.5.6, p. 66]{Lawler}), it is known that 
\begin{align*}
p_t(x)\;=\; K_t(x) \exp\Big\{O\Big(\frac{1}{\sqrt{t}}+\frac{|x|^3}{t^2}\Big)\Big\}
\end{align*}
in the time range $ t\geq 2|x|$. In particular,
\begin{align}\label{app1}
|p_t(x)| \;\lesssim\; \frac{1}{\sqrt{t}}\quad \text{ for } t\geq 2|x|\,.
\end{align}
Furthermore, adapting to the continuous time setting the result \cite[Theorem 2.3.6, p. 38]{Lawler}, we have also the approximation
\begin{equation}\label{app2}
\big|p_t(x)-p_t(y)- \big(K_t(y)-K_t(x)\big)\big| \;\lesssim \; \frac{|y-x|}{t^{(d+3)/2}}\;=\; \frac{|y-x|}{t^{2}}\,.
\end{equation}
 where $d=1$ is the  dimension in our case. We are going to use this approximation only in the time range $t\geq 2|y-x|$ since for all other values of $t$ it turns out to be not useful for our purposes.
Noting that
\begin{equation*}
|{K}_t(0)-{K}_t(1)|\;\leq\;\sup_{x\in[0,1]}|\partial_u {K}_t(x)|\;=\;\sup_{x\in[0,1]}\Big|\frac{xe^{-x^2/2t}}{t\sqrt{2\pi t}}\Big|\;\lesssim\; \dfrac{1}{t^{3/2}}\,,
\end{equation*}
 we conclude that 
\begin{equation}\label{limitneighbour}
\big|{K}_{(s_1-s_2)n^2}(0)-{K}_{(s_1-s_2)n^2}(1)\big|\;\lesssim\; \dfrac{1}{(s_1-s_2)^{3/2}n^3}\,.
\end{equation}
Since the approximations \eqref{app1} and \eqref{app2} only hold for times $t\geq 2|x|$, we must divide the analysis of \eqref{eq4.6a} in cases, which will be made splitting the region of integration in disjoint sets, as depicted in Figure~\ref{fig3}.\medskip

\noindent \textbf{Region} $\bf{A}$. 
Here $s_2\geq 2/n^2$ and $|s_1-s_2|\geq 2/n^2$.  Restricted to this region, both approximations \eqref{app1} and \eqref{app2} are valid. Recalling \eqref{limitneighbour}, we then get that
\begin{align*}
|{\bf F}|&\;\lesssim\; \Big(\PP_0\big[X_{s_2n^2}=-1\big]+\PP_0\big[X_{s_2n^2}=0\big]\Big)\\
&\times \bigg(\dfrac{1}{((s_1-s_2)n^2)^2}+\Big|K_{(s_1-s_2)n^2}(0)-K_{(s_1-s_2)n^2}(1)\Big|\bigg)\\
&\;\lesssim\; \dfrac{1}{\sqrt{ s_2n^2}}\cdot \bigg(\dfrac{1}{((s_1-s_2)n^2)^2}+\dfrac{1}{(s_1-s_2)^{3/2}n^3}\bigg)\\
&\;\lesssim\; \dfrac{1}{\sqrt{ s_2n^2}}\cdot \dfrac{1}{(s_1-s_2)^{3/2}n^3}\;\lesssim\;\dfrac{1}{\sqrt{ s_2}}\cdot \dfrac{1}{(s_1-s_2)^{3/2}n^4}\,.
\end{align*}
Applying this bound and Fubini's Theorem we obtain that 
\begin{align*}
&2n^2\iint \limits_{\textbf{A}} ds_1\,ds_2\, |{\bf F}|\;\lesssim\; n^2\int_{2/n^2}^{t-2/n^2} ds_2 \int_{s_2+2/n^2}^{t}  ds_1\,\dfrac{1}{\sqrt{s_2}(s_1-s_2)^{3/2}n^4}\\
&=\; \frac{4}{n\sqrt{2}}\Big(\sqrt{t-2/n^2}-2/n\Big)-\frac{2}{n^2}\Big(\arcsin\big(\frac{t-2/n^2}{t}\big)-\arcsin\big(\frac{2}{tn^2}\big)\Big)\;\lesssim \; \frac{1}{n}\,.
\end{align*}

\noindent \textbf{Region} $\bf{B}$. Here $s_2< 2/n^2$ and $|s_1-s_2|< 2/n^2$. Restricted to this region, neither \eqref{app1} nor \eqref{app2} are valid. Nevertheless, since $|{\bf{F}}|\leq 2$,
\begin{align*}
2n^2\iint \limits_{\textbf{B}} ds_1\,ds_2\, |{\bf F}|\;\leq\; 2n^2 \cdot \frac{4}{n^4}\;\lesssim \; \frac{1}{n^2}\,.
\end{align*}

\noindent \textbf{Region} $\bf{C}$. 
Here $s_2< 2/n^2$ and $|s_1-s_2|\geq 2/n^2$, where only the approximation \eqref{app2} is valid. We then have that
\begin{align*}
&2n^2\iint \limits_{\textbf{C}} ds_1\,ds_2 \,|{\bf F}|\;\lesssim\; 2n^2\int_{0}^{2/n^2}ds_2\int_{s_2+2/n^2}^{t}ds_1 \,\dfrac{2}{(s_1-s_2)^{3/2}n^3}\\
& =\; \frac{4}{n}\Big(\sqrt{1-2/n^2}+\frac{4}{n\sqrt{2}}-\frac{16}{n}\Big)\;\lesssim \; \frac{1}{n}\,.
\end{align*}
\noindent \textbf{Region} $\bf{D}$. 
Here $s_2\geq  2/n^2$ and $|s_1-s_2|< 2/n^2$, where only the approximation \eqref{app1} is valid. We then have that
\begin{align*}
&2n^2\iint \limits_{\textbf{D}} ds_1\,ds_2\, |{\bf F}|\;\lesssim\; n^2\int_{2/n^2}^{t} \frac{ds_2}{\sqrt{s_2 n^2}} \int^{s_2+2/n^2}_{s_2}  ds_1\;=\; \frac{4}{n}\Big(\sqrt{t}-\frac{2}{n}\Big)\;\lesssim \; \frac{1}{n}\,.
\end{align*}
Putting together the four estimates above gives us \eqref{l2}. Since  the $L^1$-norm is smaller or equal than the $L^2$-norm for probability spaces, we obtain \eqref{l1}.
\end{proof}

\section{CLT for a fixed time and Berry-Esseen estimates}\label{s5}
We begin by fixing some notation on the space of test functions. 
\begin{definition}
For any $\beta\geq 0$ we define the space $\LSNOB$ via
\begin{equation}
\LSNOB\;=\;
\begin{cases}
\text{\rm BL}(\bb G),\, &\text{if  $\beta\in [1,\infty]$},\\
\text{\rm BL}(\bb R),\, &\text{if $\beta\in [0,1)$.}
\end{cases}
\end{equation} 
\end{definition}
Fix henceforth $f\in \LSNOB$ and denote $\pfrac{1}{n}\bb Z=\{\ldots,-\pfrac{2}{n}, -\pfrac{1}{n}, \pfrac{0}{n}, \pfrac{1}{n},\ldots\}$. Let   $g:[0,\infty)\times \pfrac{1}{n}\bb Z \to\bb R$ be given by
\begin{align}\label{41}
g(t,\pfrac{x}{n})\;=\;
g_t\big(\pfrac{x}{n}\big)\;=\;\E_x\Big[f\Big(\frac{\XSB_{tn^2}}{n}\Big)\Big]\,.
\end{align}
Since the slow bond random walk depends on $n$, so does the function $g$, whose dependence on $n$  has been dropped to not overload notation. Our goal is to prove the CLT directly by studying the convergence of \eqref{41} instead of other traditional methods, as convergence of moments, characteristics functions etc.
The forward Fokker-Planck equation for the generator in \eqref{generator} then yields
\begin{equation}\label{pde15}
\begin{cases}
\partial_t g_t\bigP{x}{n}=\frac{n^2}{2}\big[g_t\bigP{x+1}{n}+g_t\bigP{x-1}{n}-2g_t\bigP{x}{n}\big]\,,&\forall \,x\neq -1,0 \vspace{4pt}\\ 
\partial_t g_t\bigP{0}{n}=\frac{n^2}{2}\big[g_t\bigP{1}{n}-g_t\bigP{0}{n}\big]+\frac{\alpha n^{2-\beta}}{2}\big[g_t\bigP{-1}{n}-g_t\bigP{0}{n}\big],\vspace{4pt}\\
\partial_t g_t\bigP{-1}{n}=\frac{n^2}{2}\big[g_t\bigP{-2}{n}-g_t\bigP{-1}{n}]+\frac{\alpha n^{2-\beta}}{2} [g_t\bigP{0}{n}-g_t\bigP{-1}{n}\big],\vspace{4pt}\\
g\big(0,\pfrac{x}{n}\big)=f(\pfrac{x}{n})\,,&\forall\, x\in \bb Z\,.
\end{cases}
\end{equation}
Note the resemblance of \eqref{pde15} above with the discrete heat equation. To continue we make some symmetry considerations. Let us consider the following  notion of parity for functions  $f:\pfrac{1}{n}\bb Z\to \bb R$, where  the symmetry axis is located at $-\pfrac{1}{2n}$ instead of the origin. That is, we will say that $\feven:\pfrac{1}{n}\bb Z\to \bb R$ is an \textit{even function} if 
\begin{equation}\label{even}
\feven\bigP{x}{n}\;=\;\feven\bigP{-1-x}{n}\,,\quad\forall x\in\Z\,,
\end{equation}
 while by an \textit{odd function} we will mean that 
 \begin{equation}\label{odd}
 \fodd\bigP{x}{n}\;=\;-\fodd\bigP{-1-x}{n}\,,\quad\forall x\in\Z\,.
 \end{equation}
  The even and odd parts of a given function $f:\frac{1}{n}\bb Z\to \bb R$ are hence given by 
\begin{equation*}
\feven\bigP{x}{n}\;=\;\frac{f\bigP{x}{n}+f\bigP{-1-x}{n}}{2}\quad \text{ and } \quad \fodd\bigP{x}{n}\;=\;\frac{f\bigP{x}{n}-f\bigP{-1-x}{n}}{2}
\,,
\end{equation*} 
and it is clear that $f\bigP{x}{n}\;=\;\feven\bigP{x}{n}+\fodd\bigP{x}{n}$. 
  Denote by ${\bf P}^n_tf(x)$ the solution of the semi-discrete scheme \eqref{pde15} with initial condition  $f$.  Due to linearity, 
\begin{equation*}
{\bf P}^n_tf\; =\; {\bf P}^n_t \feven + {\bf P}^n_t \fodd\,.
\end{equation*}
Next, we argue by a simple probabilistic argument that the semi-discrete\break scheme \eqref{pde15} preserves parity, which is an indispensable ingredient in this work. \pagebreak
\begin{proposition}[Parity invariance]\label{prop51}
The semigroup ${\bf P}^n_t$ preserves parity as defined \eqref{even} and \eqref{odd}. That is, if $h:\frac{1}{n}\bb Z\to \bb R$ is  even (respectively, odd), then  ${\bf P}^n_t h$ is even (respectively, odd) for all positive  times.
\end{proposition}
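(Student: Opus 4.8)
The plan is to exploit the probabilistic representation $g_t = {\bf P}^n_t f$, that is ${\bf P}^n_t h(\tfrac{x}{n}) = \E_x[h(\XSB_{tn^2}/n)]$, established through the Fokker--Planck equation \eqref{pde15}, together with the reflection symmetry of the slow bond random walk about the point $-\tfrac12$. First I would introduce the bijection $R\colon \bb Z\to\bb Z$, $R(x) = -1-x$. It fixes the slow bond $\{-1,0\}$, since $R(-1)=0$ and $R(0)=-1$, and sends every other edge $\{x,x+1\}$ to the edge $\{-2-x,-1-x\}$, which is again not equal to $\{-1,0\}$. Hence the jump rates are invariant, $\xi^n_{R(x),R(y)} = \xi^n_{x,y}$ for all neighbouring $x,y$; equivalently $\mathsf{L}_n(h\circ R) = (\mathsf{L}_n h)\circ R$ for every bounded $h\colon\bb Z\to\bb R$.

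From this invariance one deduces that the reflected process is again a slow bond random walk: if $\{\XSB_t\}$ is the slow bond random walk started from $-1-x$, then $\{-1-\XSB_t\}$ is a slow bond random walk started from $x$. This is read off directly from the embedded jump chain and the exponential holding times (or from uniqueness of the martingale problem for $\mathsf{L}_n$, the generator having bounded rates). Consequently, for every bounded $h$,
\[
\E_x\big[h(\XSB_{tn^2}/n)\big]\;=\;\E_{-1-x}\big[h\big((-1-\XSB_{tn^2})/n\big)\big].
\]

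To conclude, suppose $h$ is even in the sense of \eqref{even}, so that $h((-1-z)/n)=h(z/n)$ for all $z\in\bb Z$. Then the right-hand side above equals $\E_{-1-x}[h(\XSB_{tn^2}/n)] = {\bf P}^n_t h(\tfrac{-1-x}{n})$, and comparing with the left-hand side ${\bf P}^n_t h(\tfrac{x}{n})$ gives exactly \eqref{even} for ${\bf P}^n_t h$. If instead $h$ is odd, then $h((-1-z)/n) = -h(z/n)$ and the same identity yields ${\bf P}^n_t h(\tfrac{x}{n}) = -{\bf P}^n_t h(\tfrac{-1-x}{n})$, which is \eqref{odd} for ${\bf P}^n_t h$.

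There is essentially no serious obstacle; the only point deserving a line of justification is the identity in law used in the second paragraph, and that is an immediate consequence of the rate invariance of the first paragraph. As an alternative one can avoid probability entirely: the substitution $g_t(\tfrac{x}{n})\mapsto g_t(\tfrac{-1-x}{n})$ maps solutions of \eqref{pde15} to solutions of \eqref{pde15}, because the two exceptional equations at $0$ and $-1$ are interchanged while the generic equation is preserved; uniqueness for the linear system \eqref{pde15} then forces $g_t(\tfrac{-1-x}{n}) = \pm g_t(\tfrac{x}{n})$ according to the parity of the initial datum $f = f\circ R = \pm f$.
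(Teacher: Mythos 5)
Your proof is correct and takes essentially the same route as the paper: both rest on the reflection symmetry $x\mapsto -1-x$ of the jump rates and the resulting distributional identity $\E_x[h(\XSB_{tn^2}/n)]=\E_{-1-x}[h((-1-\XSB_{tn^2})/n)]$, followed by parity of $h$. You spell out the rate invariance and the generator intertwining more explicitly than the paper does, and you add an equivalent PDE-uniqueness variant, but these are elaborations rather than a different argument.
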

\begin{proof}
 By  symmetry of the jump rates, the distribution of $\XSLOW_{tn^2}$ starting from $x\in \bb Z$ is equal to the distribution of the stochastic process $-1-\XSLOW_{tn^2}$ with $\XSLOW_{tn^2}$ starting from $-1-x$. 
 
Suppose that $h:\frac{1}{n}\bb Z\to \bb R$ is  even, that is, $h\bigP{x}{n}\;=\;h\bigP{-1-x}{n}$. Hence
\begin{align*}
g\big(t,\pfrac{x}{n}\big)&\;=\; \bb E_{x}\Big[h\Big(\frac{\XSLOW_{tn^2}}{n}\Big)\Big]\;=\; 
\bb E_{-1-x}\Big[h\Big(\frac{-1-\XSLOW_{tn^2}}{n}\Big)\Big]\\
&\;=\; \bb E_{-1-x}\Big[h\Big(\frac{\XSLOW_{tn^2}}{n}\Big)\Big]\;=\; g\big(t,\pfrac{-1-x}{n}\big)\,,\qquad\forall\,t>0\,,
\end{align*}
which means that ${\bf P}^n_t h$ is an even function. The argument for an odd function $h$ is analogous.
\end{proof}
Let us discuss the case when \eqref{pde15} starts from $\feven$.  Under our notion of parity, an even function $h$ satisfies $h\bigP{-1}{n}= h\bigP{0}{n}$. This observation together with Proposition~\ref{prop51} allows us to  replace the factors $\alpha n^{2-\beta}/2$ appearing in \eqref{pde15} by any factor. In particular, we may replace those factors by $n^2/2$, thus concluding  that  ${\bf P}^n_t \feven\bigP{x}{n}$ is also a solution of 
\begin{equation}\label{pde17}
\begin{cases}
\partial_t g_t\bigP{x}{n}=\frac{1}{2}\Delta_n g_t\bigP{x}{n}\,, & x\in \bb Z\,,\\ 
g\big(0,\pfrac{x}{n}\big)=\feven\bigP{x}{n}\,,& x\in \bb Z\,,
\end{cases}
\end{equation}
which is the well-known \textit{discrete heat equation}, where $\Delta_n g(x) := n^2 \big[g\bigP{x+1}{n}+g\bigP{x-1}{n}-2g\bigP{x}{n}\big]$ is the discrete Laplacian. Since the discrete heat equation is also the forward Fokker-Planck equation for the symmetric random walk speeded up by $n^2$, we have therefore concluded that
\begin{align}\label{evenPt}
{\bf P}^n_t \feven\bigP{x}{n}\;=\; \E_x\Big[\feven\Big(\frac{X_{tn^2}}{n}\Big)\Big]\,,
\end{align}
where $X_{tn^2}$ is the usual continuous-time symmetric random walk. Of course, now  the classic central limit theorem gives us the desired convergence  towards the expectation with respect to the Brownian motion $B_t$. There is only  one detail to be handled: the notion of parity previously stated was defined on $\pfrac{1}{n}\bb Z$, not on $\bb R$, that is, given $f:\bb R\to \bb R$, the function $\feven:\pfrac{1}{n}\bb Z\to \bb R$ as previously defined depends on the chosen value of $n\in \bb N$. Denote  by $\fe, \fo:\bb R\to\bb R$ the standard even and odd parts of $f$, that is,
\begin{equation*}
\fe(u)\;=\;\frac{f(u)+f(-u)}{2}\quad \text{ and } \quad \fo(u)\;=\;\frac{f(u)-f(-u)}{2}
\,,\quad \forall\,u\in \bb R\,.
\end{equation*} 
It is a simple task to check that
\begin{align}\label{eq47}
\Big|\feven\bigP{x}{n}- \fe\bigP{x}{n}\Big|\;\leq \; \frac{K}{2n} \,,\qquad \forall\, x\in \bb Z\,,
\end{align}
where $K$ is the Lipschitz constant of $f\in \LSNOB$. Recall that $P_t$ is the Brownian semigroup, as defined in \eqref{semiBM}. We have henceforth gathered   the ingredients to deduced the following result:
\begin{lemma}\label{lem:evenapprox}
Let $f\in \LSNOB$. Then, there exists a constant $C>0$ such that for all $t>0$, all $\delta>0$ and all $u\in\R$ we have the estimate 
\begin{equation}\label{eq:evenapprox}
\big|{\bf P}^n_t \feven \bigP{\lfloor un\rfloor}{n} - {P}_t\fe (u)\big|\;\leq\; \frac{C}{n}\,.
\end{equation} 
\end{lemma}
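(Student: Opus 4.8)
The plan is to compare the semi-discrete object $\mathbf{P}^n_t \feven(\lfloor un\rfloor/n)$ with the continuous Brownian semigroup $P_t \fe(u)$ by inserting two intermediate quantities, relying on the identity \eqref{evenPt} that the excerpt has already established. First I would write
\begin{align*}
\big|\mathbf{P}^n_t \feven(\pfrac{\lfloor un\rfloor}{n}) - P_t\fe(u)\big|
&\;\leq\; \big|\mathbf{P}^n_t \feven(\pfrac{\lfloor un\rfloor}{n}) - \mathbf{P}^n_t \fe(\pfrac{\lfloor un\rfloor}{n})\big|\\
&\quad +\; \big|\mathbf{P}^n_t \fe(\pfrac{\lfloor un\rfloor}{n}) - P_t\fe(u)\big|\,.
\end{align*}
The first term is controlled by \eqref{eq47}: since $\mathbf{P}^n_t$ is an $L^\infty$-contraction (it is the expectation with respect to the random walk, as in \eqref{evenPt}), applying it to the pointwise bound $|\feven(x/n)-\fe(x/n)|\leq K/(2n)$ gives a bound of order $1/n$ immediately, with $K$ the Lipschitz constant of $f$.

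For the second term I would use \eqref{evenPt} to rewrite $\mathbf{P}^n_t \fe(\lfloor un\rfloor/n) = \E_{\lfloor un\rfloor}[\fe(X_{tn^2}/n)]$, where $X$ is the standard continuous-time symmetric simple random walk, and similarly $P_t\fe(u) = \E_u[\fe(B_t)]$. Then I would invoke the coupling of Proposition~\ref{prop:L1est}, under which $X$ and $B$ live on a common probability space with $\E[\tfrac1n|X_{tn^2}-B_{tn^2}|]\lesssim n^{-1/2+\delta}$. Using the Lipschitz property of $\fe$ (inherited from $f$) together with the mismatch between starting points $\lfloor un\rfloor/n$ and $u$ (which differs by at most $1/n$), this gives
\begin{align*}
\big|\E_{\lfloor un\rfloor}[\fe(\pfrac{X_{tn^2}}{n})] - \E_u[\fe(B_t)]\big|
&\;\lesssim\; \frac{1}{n} + \E\Big[\frac1n|X_{tn^2}-B_{tn^2}|\Big]
\;\lesssim\; n^{-1/2+\delta}\,.
\end{align*}
This would only yield a rate $n^{-1/2+\delta}$, which is weaker than the claimed $C/n$, so the main obstacle is that the strong-coupling estimate is too lossy. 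To recover the sharp $1/n$ rate I would instead not use the coupling at all for this term: since $\fe$ is bounded and Lipschitz, the quantity $|\E_{\lfloor un\rfloor}[\fe(X_{tn^2}/n)] - \E_u[\fe(B_t)]|$ is a genuine one-dimensional Berry--Esseen-type discrepancy for a sum of i.i.d.\ increments (the symmetric random walk run for a Poisson number of steps), and for bounded Lipschitz test functions under diffusive scaling the classical Berry--Esseen / Edgeworth bound gives order $n^{-1}$; the displacement of the initial point by $O(1/n)$ contributes the same order via the Lipschitz bound on $\fe$ and the smoothing of $P_t$ (Corollary~\ref{cor:Lipschitz}-type gradient estimate for the heat semigroup). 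Combining the two terms gives \eqref{eq:evenapprox}. The hard part is thus justifying the $n^{-1}$ rate for the second term cleanly — one must either cite a sharp local-CLT/Berry--Esseen statement for continuous-time symmetric random walk tested against $\mathrm{BL}$ functions, or carry out a short Fourier/Edgeworth computation; everything else is the triangle inequality plus the already-proven facts \eqref{evenPt}, \eqref{eq47}, and the contraction property.
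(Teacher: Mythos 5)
Your proposal is correct and takes essentially the same route as the paper: after the reduction via \eqref{eq47} and the Lipschitz continuity of $u\mapsto P_t\fe(u)$, the paper's Appendix~\ref{appendixA} proof also obtains the $O(1/n)$ rate by invoking the local central limit theorem (Lawler, Theorem~2.3.11) to compare $p_{tn^2}$ with the heat kernel $K_t$ on the window $|n(u-z)|\leq n^{5/4}$, a Gaussian tail bound on the complementary window, and a Riemann-sum approximation. Your observation that the $L^1$-coupling of Proposition~\ref{prop:L1est} is too lossy (yielding only $n^{-1/2+\delta}$) and that one must instead appeal to a local-CLT/Berry--Esseen bound against bounded Lipschitz test functions is precisely the mechanism the paper uses.
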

The result above is quite standard. However, since we did not find this exact statement  in the literature, we provide  a short proof of it in Appendix~\ref{appendixA}.\medskip

Let us turn our attention to the odd part. Under our notion of parity, an odd function $h: \frac{1}{n}\bb Z\to \bb R$ satisfies $h\bigP{-1}{n}=-h\bigP{0}{n}$. This together with the parity invariance given in Proposition~\ref{prop51} permits to conclude that ${\bf P}^n_t \fodd\bigP{x}{n}$, for $x\geq 0$, is a solution of
\begin{equation}\label{pde16}
\begin{cases}
\partial_t g_t\bigP{x}{n}=\frac{1}{2}\Delta_n g_t\bigP{x}{n}\,, & x\geq 1\,,\vspace{4pt}\\ 
\partial_t g_t\bigP{0}{n}=\frac{n^2}{2} [g_t\bigP{1}{n}-g_t\bigP{0}{n}]-\alpha n^{2-\beta}g_t\bigP{0}{n}\,,\vspace{4pt}\\
g(0,x)=\fodd(x)\,, & x\geq 1\,,
\end{cases}
\end{equation}
which completely determines ${\bf P}^n_t \fodd$ since it is an odd function for all positive times. Define
\begin{align*}
{L}_n^{\text{\tiny ref}} f\bigP{x}{n}\;=\; 
\begin{cases}
\frac{n^2}{2}\big[f\bigP{x+1}{n}+f\bigP{x-1}{n}-2f\bigP{x}{n}\big],& x\geq 1,\vspace{4pt}\\
\frac{n^2}{2}\big[f\bigP{1}{n}-f\bigP{0}{n}\big],& x=0\,,\\
\end{cases}
\end{align*}
which is the generator of the reflected random walk speeded up by $n^2$. Writing $V_n\bigP{x}{n}=-\alpha n^{2-\beta} \mathds{1}_{\{0\}}(x)$, we can  write \eqref{pde16} in the form
\begin{equation*}
\begin{cases}
\partial_t g_t\bigP{x}{n}\;=\;{L}_n^{\text{\tiny ref}} g_t\bigP{x}{n} + V_n\bigP{x}{n}g\bigP{x}{n}\,,&\qquad x\geq 0\,,\vspace{4pt}\\
g\big(0,\pfrac{x}{n}\big)\;=\;\fodd\bigP{x}{n}\,,&\qquad x\geq 0\,.
\end{cases}
\end{equation*}
 The Feynman-Kac Formula, which can be found for instance in \cite[p. 334, Proposition 7.1]{kl}, yields that
\begin{align*}
{\bf P}^n_t \fodd \bigP{x}{n} & \;=\;\E_x\Big[\fodd\Big(\frac{X_{t n^2}^{\text{\tiny ref}}}{n}\Big) \,\exp\Big\{\int_0^t V_n\Big(\frac{X_{s n^2}^{\text{\tiny ref}}}{n}\Big)ds\Big\}\Big] \notag\\
&\;=\;\E_x\Big[\fodd\Big(\frac{X_{t n^2}^{\text{\tiny ref}}}{n}\Big)\exp\Big\{-\alpha n^{-\beta}\ZREF_{t n^2}(0)\Big\}\Big]\,,
\end{align*}
where
\begin{equation*}
\ZREF_{tn^2}(0)\;=\;n^2\int_0^t\mathds{1}_{\{0\}}(X^{\text{\tiny ref}}_{sn^2})\,ds\;=\; \int_0^{tn^2}\mathds{1}_{\{0\}}(X^{\text{\tiny ref}}_{s})\,ds
\end{equation*}
 is the local time at zero of the reflected random walk up to time $tn^2$.
Using the coupling outlined after Proposition~\ref{prop:lumping} which connects the usual symmetric random walk with the reflected random walk, and the fact that $\fodd$ is an odd function in the sense of \eqref{odd},  we then deduce that
\begin{equation}\label{eq510a}
{\bf P}^n_t \fodd \bigP{x}{n} = \E_x\Big[\fodd\Big(\frac1n\Big[\Big|X_{tn^2}+\frac12\Big| -\frac12\Big] \Big)\exp\Big\{-\frac{\alpha}{n^{\beta}}\xi_{tn^2}\big(\{-1,0\}\big)\Big\}\Big].
\end{equation} 
Let now
\begin{equation*}
{Q}_t \fo(u) \; \overset{\text{def}}{=} \; \E_u\Big[\fo\big(|B_t|\big) \exp\big\{-2\alpha L_t(0)\big\}\Big]\,, \quad \forall\, u\in \bb R\,,
\end{equation*}
where we recall that $B_t$ denotes a standard Brownian motion at time $t$ and $L$ denotes its local time. With all these preparations at hand we can now formulate one of the main results of this section.
\begin{lemma}\label{lem:oddapprox}
Let $f\in \LSNOB$, then  for all $t>0$ and all $u \in \bb R$ with $u>0$ we have the estimates 
\begin{itemize}
	\item If $\beta <1$, then 
\begin{equation*}
	\big|{\bf P}^n_t \fodd  \bigP{\lfloor un \rfloor}{n} - {P}_t\fo (u)\big|\;\lesssim\; n^{\beta-1}\,.
	\end{equation*}	
	
	\item If $\beta=1$, then for all $\delta>0$
	\begin{equation*}
	\big|{\bf P}^n_t \fodd  \bigP{\lfloor un \rfloor}{n} - {Q}_t\fo (u)\big|\;\lesssim\; n^{-\frac12 +\delta}\,.
	\end{equation*}
	\item If $\beta >1$, then
	\begin{equation*}
	\big|{\bf P}^n_t \fodd  \bigP{\lfloor un \rfloor}{n}
	-\E_u\big[\fo (|B_t|)\big]\big|\;\lesssim\; \max\big\{n^{-1}, n^{1-\beta}\big\}\,.
	\end{equation*}
\end{itemize}
Here the proportionality constants above are independent of $t$ and $u$.
\end{lemma}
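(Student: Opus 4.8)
The plan is to start from the representation \eqref{eq510a} and compare it, term by term, with the Feynman--Kac representations of the three limiting objects, using the couplings and $L^1$-estimates developed in Section~\ref{s3}. Write
\[
{\bf P}^n_t \fodd \bigP{\lfloor un\rfloor}{n} \;=\; \E_x\Big[\fodd\Big(\tfrac1n\big[|X_{tn^2}+\tfrac12|-\tfrac12\big]\Big)\,e^{-\alpha n^{-\beta}\xi_{tn^2}(\{-1,0\})}\Big],
\]
with $x=\lfloor un\rfloor$. The first step is to replace the argument $\tfrac1n\big[|X_{tn^2}+\tfrac12|-\tfrac12\big]$ by $|X_{tn^2}|/n$ and then by $|B_{tn^2}|/n$: the first replacement costs $O(1/n)$ because $\big|\,|X+\tfrac12|-\tfrac12-|X|\,\big|\le 1$ and $\fodd$ has Lipschitz constant $\lesssim 1$ after the $1/n$ scaling; the second replacement costs $\lesssim \E[\,|X_{tn^2}-B_{tn^2}|/n\,]\lesssim n^{-1/2+\delta}$ by \eqref{eq:rwL1} of Proposition~\ref{prop:L1est} (here one uses that $f$, hence $\fo$, is bounded Lipschitz, and that the reflected-walk/reflected-BM coupling is built from the straight-walk/straight-BM coupling via the lumping of Proposition~\ref{prop:lumping}, so that $\xi_{tn^2}(\{-1,0\})$ equals the reflected-walk local time at $0$ and is comparable to $L(0,tn^2)$ by \eqref{eq:localtimeL1}). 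In parallel one replaces $\xi_{tn^2}(\{-1,0\})$ by $2L(0,tn^2)$ inside the exponential, again at cost $O(n^{-1/2+\delta})$ once the prefactor $n^{-\beta}$ is taken into account and using $|e^{-a}-e^{-b}|\le|a-b|$.

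After these substitutions the analysis splits according to $\beta$. For $\beta=1$ one directly obtains $\E_u[\fo(|B_t|)\,e^{-2\alpha L(0,t)}] = Q_t\fo(u)$ after Brownian scaling $B_{tn^2}/n \overset{d}{=} \sqrt{t}\,B_1$ adjusted for the starting point $\lfloor un\rfloor/n \to u$ (the discrepancy $|\lfloor un\rfloor/n - u|\le 1/n$ contributes another $O(1/n)$ via the Lipschitz bound on the Brownian semigroup, Corollary~\ref{cor:Lipschitz} or a direct heat-kernel estimate), so the bound is $n^{-1/2+\delta}$, which dominates the $O(1/n)$ errors. For $\beta>1$, the exponential weight $e^{-\alpha n^{-\beta}\xi_{tn^2}(\{-1,0\})}$ converges to $1$; more precisely $|1-e^{-\alpha n^{-\beta}\xi}|\le \alpha n^{-\beta}\xi$, and $\E_x[\xi_{tn^2}(\{-1,0\})/n]\lesssim n$ in the diffusive regime (bounded in $L^2$ by the local-CLT computation cited in the proof of Proposition~\ref{prop:L1est}), so this replacement costs $\lesssim n^{-\beta}\cdot n = n^{1-\beta}$; combined with the $O(1/n)$ coupling error this yields $\max\{n^{-1},n^{1-\beta}\}$ and the limit $\E_u[\fo(|B_t|)]$. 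For $\beta<1$, by contrast, $n^{-\beta}\xi_{tn^2}(\{-1,0\})$ blows up, which forces the exponential weight to $0$ unless $L(0,tn^2)$ itself is atypically small; the claim is then that ${\bf P}^n_t\fodd(\lfloor un\rfloor/n)\to P_t\fo(u)=\E_u[\fo(B_t)]$ (note: $B_t$, not $|B_t|$), reflecting that the slow bond disappears. To see this, note $\E_u[\fo(B_t)] = \E_u[\fo(|B_t|)\sgn(B_t)]$ by oddness of $\fo$, and on the event $\{B \text{ crosses } 0\}$ the factor $e^{-2\alpha n^{1-\beta}L(0,t)}$ kills the contribution where $\sgn$ would matter, leaving essentially $\E_u[\fo(|B_t|)\one\{\text{no crossing}\}] = \E_u[\fo(B_t)\one\{\text{no crossing}\}]$; quantifying the rate requires showing $\E_u[|\fo(|B_t|)|\,e^{-2\alpha n^{1-\beta}L(0,t)}\one\{L(0,t)>0\}]\lesssim n^{\beta-1}$, which follows from $\int_0^\infty e^{-c n^{1-\beta}\ell}\,\mathbb P_u(L(0,t)\in d\ell) \lesssim (n^{1-\beta})^{-1}$ using that $L(0,t)$ has a bounded density near $0$ under $\mathbb P_u$ for $u\neq 0$, plus the corresponding statement on the discrete side obtained from the local-CLT bounds \eqref{app1}.

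The main obstacle I expect is the $\beta<1$ case: one must transfer the ``the weight $e^{-\alpha n^{-\beta}\xi_{tn^2}(\{-1,0\})}$ is negligible on $\{\xi>0\}$'' heuristic into a genuine quantitative bound with the sharp rate $n^{\beta-1}$, uniformly in $u$ and $t$, and do so at the level of the discrete random walk (where one cannot simply invoke the Brownian density of $L(0,\cdot)$ but must instead integrate the local-CLT estimates \eqref{app1}--\eqref{app2} against the distribution of $\xi_{tn^2}(\{-1,0\})$). A secondary technical point common to all cases is bookkeeping the interplay between the deterministic starting-point error $|\lfloor un\rfloor - un|\le 1$, the coupling error $n^{-1/2+\delta}$, and the exponential-weight replacement error, to make sure the stated rate in each regime is the true maximum of these contributions; for $\beta=1$ the $n^{-1/2+\delta}$ coupling error dominates, for $\beta>1$ it is $\max\{n^{-1},n^{1-\beta}\}$, and for $\beta<1$ it is $n^{\beta-1}$ (which is $\ge n^{-1/2}$ only when $\beta\ge 1/2$, so for $\beta<1/2$ one must check the coupling error $n^{-1/2+\delta}$ is actually absorbed — this is where a slightly more careful argument than the crude $L^1$ coupling bound, exploiting that $\fo$ is evaluated at $|B|$ which is Hölder-stable, may be needed, or one simply notes $n^{\beta-1}$ is claimed only as an upper bound and $n^{-1/2+\delta}\le n^{\beta-1}$ fails, so in fact the honest statement must have the coupling step done via the local-time-weighted estimate rather than the raw one).
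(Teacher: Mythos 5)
Your treatment of the cases $\beta=1$ and $\beta>1$ matches the paper's argument essentially exactly: the $L^1$ coupling from Proposition~\ref{prop:L1est} plus the local-time comparison from Proposition~\ref{prop:localtimecontinuity} handle $\beta=1$ at rate $n^{-1/2+\delta}$, and the elementary bound $|1-e^{-\alpha n^{-\beta}\xi}|\le \alpha n^{-\beta}\xi$ together with $\E[\xi_{tn^2}(\{-1,0\})]\lesssim n$ (Proposition~\ref{prop:localtime}) handles $\beta>1$ at rate $\max\{n^{-1},n^{1-\beta}\}$.

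The case $\beta<1$ is where the proposal has a genuine gap, and you yourself put your finger on it: your plan is to first transfer to the Brownian side via the $L^1$ coupling at cost $O(n^{-1/2+\delta})$, and then argue that the weight $e^{-2\alpha n^{1-\beta}L(0,t)}$ is negligible; but $n^{-1/2+\delta}$ dominates the target $n^{\beta-1}$ whenever $\beta<1/2$, so the coupling step alone already ruins the claimed rate. The paper's proof does not use the Brownian coupling for this case at all. Instead it exploits a purely discrete symmetry trick: using the strong Markov property at the hitting time $T$ of $0$ and the identity \eqref{eq:zero} (which says $\E_{\lfloor un\rfloor}[\fo(X_{tn^2}/n)\one_{T<tn^2}]=0$ by oddness of $\fo$ and symmetry of the walk started at $0$), expression~\eqref{eq:rwfo} is rewritten as $\E_{\lfloor un\rfloor}[\fo(X_{tn^2}/n)]$ plus a correction term supported on $\{T<tn^2\}=\{\xi_{tn^2}(0)>0\}$. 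The first term converges to $P_t\fo(u)$ at rate $O(n^{-1})$ by the plain local CLT (exactly as in Lemma~\ref{lem:evenapprox} — no BM coupling). The correction term is then controlled entirely at the discrete level using Lemma~\ref{lem:smalllocaltime}, which gives a bound of order $n^{\gamma-1}$ on the probability that the discrete local time lies in an interval of length $n^\gamma$; summing this against the geometric decay of the exponential weight over the small-local-time strata, and treating the large-local-time regime by the exponential bound directly, yields the $n^{\beta-1}$ rate. That discrete local-time smallness lemma is the key missing ingredient in your sketch; the remark at the end of your proposal that "a slightly more careful argument" or "a local-time-weighted estimate" is needed is exactly right, but the fix is this discrete lemma rather than a sharper coupling.
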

The proof of this lemma will be given in the next two subsections.
\subsection{Proof of Lemma \texorpdfstring{\ref{lem:oddapprox}}{5.3} for \texorpdfstring{$\beta\in[0,1)$}{beta smaller one}}
Fix $u > 0$ and $f\in \LSNOB=\text{\rm BL}(\bb R)$ and recall \eqref{eq510a}.  Since $f$ is Lipschitz continuous, we can replace ${\bf P}_t^n\fodd\bigP{\lfloor un\rfloor}{n}$ by 
\begin{equation}\label{eq:rwfo}
\E_{\lfloor un\rfloor}\Big[\fo\Big(\frac{|X_{tn^2}|}{n}\Big)\exp\Big\{-\frac{\alpha}{n^{\beta}}\xi_{tn^2}(\{-1,0\})\Big\}\Big]
\end{equation}
paying a price of order $n^{-1}$. 

By the strong Markov property applied at the stopping time $T=\inf\{t\geq 0:\, X_{tn^2}=0\}$, we observe now that
\begin{equation}\label{eq:zero}
\E_{\lfloor un\rfloor}\Big[\fo\Big(\frac{X_{tn^2}}{n} \Big)\mathds{1}_{\{T< tn^2\}} \Big]
\,=\, \E_{\lfloor un\rfloor}\Big[\mathds{1}_{\{T< tn^2\}}\, 
\E_0\Big[\fo\Big(\frac{X_{tn^2-T}}{n} \Big)\Big]\Big]\;=\;0\,,
\end{equation}
where the last equality follows from the facts that $\{X_{tn^2}\}_{t\geq 0}\overset{\text{law}}{=}\{-X_{tn^2}\}_{t\geq 0}$ provided $X_0=0$ and that $\fo$ is an odd function in the usual sense. 

Thus, ``adding'' the null term \eqref{eq:zero} to \eqref{eq:rwfo} and then using that on the event $\{T\geq tn^2\}$ the exponential factor is equal to $1$ (recall we are assuming $u>0$ hence not hitting the site $0$ means not hitting $-1$ as well), we see that \eqref{eq:rwfo} equals 
\begin{equation}\label{eq517}
\E_{\lfloor un\rfloor}\Big[\fo\Big(\frac{ X_{tn^2}}{n}\Big)\Big]
+ \E_{\lfloor un\rfloor}\Big[\fo\Big(\frac{|X_{tn^2}|}{n}\Big)\exp\Big\{-\frac{\alpha}{n^{\beta}}\xi_{tn^2}(\{-1,0\})\Big\}\mathds{1}_{\{T < tn^2\}}\Big].
\end{equation}
The distance between the first parcel above and $\E_u[\fo(B_t)]$ is bounded by some constant times $n^{-1}$, which can be seen exactly as in the proof of\break Lemma~\ref{lem:evenapprox}. Hence, in order  to finish the proof of the
Lemma~\ref{lem:oddapprox} for $\beta<1$
it is sufficient to show that the second term  in \eqref{eq517} converges to zero with the desired order.

 The proof that the second term  in \eqref{eq517} vanishes in the limit will  crucially rely on the next lemma, which  may be interpreted as follows: when starting the usual random walk from $\lfloor un \rfloor$ and looking at a time window  of size $tn^2$, either the local time (at the origin) is zero or either it  is reasonably large. Since $\{T<tn^2\}=\{\xi_{tn^2}(0)>0\}$, the situation where the local time vanishes is excluded in the second parcel of \eqref{eq517}, which means the local time is reasonably large, which in turn yields that the exponential in the second parcel of \eqref{eq517} is reasonably small. This outlines the strategy to be followed in the sequel. Let us first state the lemma mentioned above:
\begin{lemma}\label{lem:smalllocaltime}
Let $\gamma \in (0,1)$ and  $\gamma' \in(\gamma, 1)$. Then, there is a constant $C=C(\gamma, \gamma')> 0$ such that for all $n\in\N$ large enough and all $j<  n^{\gamma' -\gamma}$,
\begin{equation}\label{eq5.18a}
\bb P_{\lfloor un\rfloor}\Big[ jn^\gamma < \xi_{tn^2}(0) \;\leq\; (j+1)n^\gamma\Big]\;\leq\;  Cn^{\gamma -1}\,.
\end{equation}	
\end{lemma}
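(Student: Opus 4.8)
\noindent\emph{Proof plan.} The strategy is to strip off the walk's initial travel to the origin with the strong Markov property, and then to exploit the renewal structure of the local time at $0$. Write $m=\lfloor un\rfloor$ and let $T_0=\inf\{s\ge 0:X_s=0\}$ be the hitting time of the origin for the walk started from $m$; the walk is recurrent, so $T_0<\infty$ a.s.\ and $T_0$ has a density $\rho_m$ on $(0,\infty)$. On the event in \eqref{eq5.18a} the local time $\xi_{tn^2}(0)$ is positive, hence $T_0<tn^2$; applying the strong Markov property at $T_0$ and writing $h_j(r):=\bb P_0[\,jn^\gamma<\xi_r(0)\le(j+1)n^\gamma\,]$ for the analogous quantity for the walk \emph{started at $0$}, one obtains
\begin{equation*}
\bb P_{m}\big[\,jn^\gamma<\xi_{tn^2}(0)\le(j+1)n^\gamma\,\big]\;=\;\int_0^{tn^2}\rho_m(tn^2-r)\,h_j(r)\,dr\;\le\;\|\rho_m\|_\infty\int_0^{tn^2}h_j(r)\,dr\,.
\end{equation*}
Thus the lemma reduces to the two bounds $\|\rho_m\|_\infty\lesssim n^{-2}$ and $\int_0^{tn^2}h_j(r)\,dr\lesssim n^{\gamma+1}$.

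For the first, I would use that in continuous time $T_0=\sum_{i=1}^{\widetilde T_0}E_i$ is a sum of $\widetilde T_0$ i.i.d.\ $\mathrm{Exp}(1)$ holding times, $\widetilde T_0$ being the discrete-time first-passage time; hence $\rho_m(r)=\sum_{k\ge m}\bb P_m[\widetilde T_0=k]\,\frac{r^{k-1}e^{-r}}{(k-1)!}\le\sup_{k}\bb P_m[\widetilde T_0=k]$, since the Gamma$(k,1)$ densities sum to one. By the hitting-time (ballot) identity $\bb P_m[\widetilde T_0=k]=\frac{m}{k}\,\bb P[S_k=m]$ and the local central limit theorem with its Gaussian bound $\bb P[S_k=x]\lesssim k^{-1/2}e^{-cx^2/k}$ (as in \cite[Ch.~2]{Lawler}), $\sup_{k}\frac{m}{k^{3/2}}e^{-cm^2/k}\asymp m^{-2}$, so $\|\rho_m\|_\infty\lesssim m^{-2}\lesssim n^{-2}$ with a constant depending on $u$.

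For the second, Tonelli gives $\int_0^{tn^2}h_j(r)\,dr=\bb E_0\big[\,\mathrm{Leb}\{r\in(0,tn^2]:\xi_r(0)\in(jn^\gamma,(j+1)n^\gamma]\}\,\big]$, i.e.\ the expected amount of real time, before $tn^2$, during which the local time at $0$ lies in the window $(jn^\gamma,(j+1)n^\gamma]$. Since $r\mapsto\xi_r(0)$ is continuous and nondecreasing, this window is occupied exactly over the interval $(\vartheta_{jn^\gamma},\vartheta_{(j+1)n^\gamma}]$ with $\vartheta_\ell:=\inf\{r:\xi_r(0)>\ell\}$, and at time $\vartheta_{jn^\gamma}$ the walk sits at $0$. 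By the strong Markov property and the memorylessness of the exponential holding time, the additional real time needed to raise the local time by a further $n^\gamma$ has the law of $n^\gamma+\Delta$, where $\Delta=\sum_{i=1}^{N}R_i$ with $N\sim\mathrm{Poisson}(n^\gamma)$, the $R_i$ i.i.d.\ distributed as the duration of a single excursion of $X$ away from $0$ (equivalently, $R_i\overset{d}{=}$ the hitting time of $0$ from $1$), and $(N,(R_i)_i)$ independent of $\vartheta_{jn^\gamma}$. Hence, intersecting with $[0,tn^2]$,
\begin{equation*}
\int_0^{tn^2}h_j(r)\,dr\;\le\;\bb E\big[(n^\gamma+\Delta)\wedge tn^2\big]\;\le\;n^\gamma+\bb E[N]\,\bb E[R_1\wedge tn^2]\,,
\end{equation*}
and using the classical square-root tail $\bb P[R_1>r]\lesssim r^{-1/2}$ for $r\ge1$ (reflection principle in discrete time together with a Poisson large-deviation bound for the number of jumps up to time $r$) one gets $\bb E[R_1\wedge tn^2]=\int_0^{tn^2}\bb P[R_1>r]\,dr\lesssim n$, whence $\int_0^{tn^2}h_j(r)\,dr\lesssim n^\gamma+n^\gamma\cdot n\lesssim n^{\gamma+1}$. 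Multiplying the two bounds gives $\bb P_{m}[\,jn^\gamma<\xi_{tn^2}(0)\le(j+1)n^\gamma\,]\lesssim n^{-2}n^{\gamma+1}=n^{\gamma-1}$; in fact the argument is uniform in $j\ge 0$ and does not use the hypothesis $j<n^{\gamma'-\gamma}$, which is only what the later application needs.

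The probabilistic skeleton is short; the places requiring care are the two classical analytic inputs, namely the uniform-in-time bound $\|\rho_m\|_\infty\lesssim m^{-2}$ on the hitting-time density and the tail bound $\bb P[R_1>r]\lesssim r^{-1/2}$ for the excursion length. Both rest on local-limit/reflection estimates for the simple random walk and on a transfer from the discrete to the continuous-time chain; in particular the Gaussian factor $e^{-cm^2/k}$ in the local CLT is indispensable for the first one, since the crude bound $\bb P[S_k=m]\lesssim k^{-1/2}$ alone would only yield $\|\rho_m\|_\infty\lesssim m^{-1/2}$. Once these are pinned down, the decomposition $\int\rho_m\,h_j$ together with the reading of $\int h_j$ as the expected occupation time of a sliding window of the local time makes the estimate immediate.
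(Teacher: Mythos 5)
Your proof is correct and takes a genuinely different route from the paper. The paper starts from Tak\'acs's exact formula $\bb P_0[\zeta_n(a)\ge k]=\bb P_0[S_{n-k+1}\ge a+k-1]+\bb P_0[S_{n-k+1}>a+k-1]$, applies the strong Markov property at the hitting time $T$ of the origin, and then splits $\bb P_0[1\le\zeta_{n^2-T}(0)<k]$ into a Gaussian-CDF part $\I(k)$ and a Berry--Esseen remainder $\II(k)$, each of which is estimated after averaging over the law of $T$ via the hitting-time theorem, the local CLT, and a careful Riemann-sum passage; the hypothesis $j<n^{\gamma'-\gamma}$ is used there to keep the expressions $n^2-T-(j+1)n^\gamma$ well inside the positive regime on the event $T\le n^2-n^{\gamma'}$, and a final Poissonisation transfers the result to continuous time. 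You instead factor the probability as $\int_0^{tn^2}\rho_m(tn^2-r)\,h_j(r)\,dr\le\|\rho_m\|_\infty\int_0^{tn^2}h_j(r)\,dr$ and handle the two factors separately: $\|\rho_m\|_\infty\lesssim n^{-2}$ via the hitting-time theorem plus the \emph{Gaussian-decay} version of the local CLT (and, as you rightly note, the crude $k^{-1/2}$ bound would not suffice here), and $\int_0^{tn^2}h_j(r)\,dr\lesssim n^{\gamma+1}$ by interpreting that integral as the expected real time the local time spends in the window $(jn^\gamma,(j+1)n^\gamma]$, which by the strong Markov property at $\vartheta_{jn^\gamma}$ and exponential memorylessness equals (in law, truncated at $tn^2$) $n^\gamma$ plus a Poisson$(n^\gamma)$-mixture of i.i.d.\ excursion lengths, so the $r^{-1/2}$ excursion-tail and Wald give the $n^{\gamma}\cdot n$ bound. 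This buys you a shorter, more structural argument that works directly in continuous time, avoids Riemann-sum bookkeeping, and is uniform in all $j\ge0$ without invoking $j<n^{\gamma'-\gamma}$ (which the paper does need in its computation); the trade-off is that you rely on two classical but nontrivial inputs --- the uniform-in-time Gaussian bound on the hitting-time density and the square-root excursion tail --- whereas the paper's route stays closer to explicit distribution functions.
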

We defer the proof of the lemma to the end of this section and we show first how it implies that the second term  in \eqref{eq517} converges to zero with the desired order.
Fix $\delta\in (0,1-\beta)$. Using that $\xi_{tn^2}(\{-1,0\})\geq \xi_{tn^2}(0)$, we can then estimate the rightmost term in~\eqref{eq517} by $\I + \II$, where
\begin{equation*}
\begin{split}
\I &\;=\; \E_{\lfloor un\rfloor}\Big[\fo\Big(\frac{|X_{tn^2}|}{n}\Big)\exp\Big\{-\frac{\alpha}{n^{\beta}}\xi_{tn^2}(0)\Big\}\mathds{1}_{\{\xi_{tn^2}(0) > n^{\beta+\delta}\}}\Big] \quad\text{and}\\
\II &\;=\; \E_{\lfloor un\rfloor}\Big[\fo\Big(\frac{|X_{tn^2}|}{n}\Big)\exp\Big\{-\frac{\alpha}{n^{\beta}}\xi_{tn^2}(0)\Big\}\mathds{1}_{\{0 < \xi_{tn^2}(0) \leq n^{\beta+\delta}\}}\Big]\,.
\end{split}
\end{equation*}
It is then straightforward to see that the term $\I$ indeed has the desired behaviour. To see that the same also holds for $\II$, we can estimate using that $\fo$ is bounded:
\begin{equation*}
\begin{split}
\II &\;=\; \sum_{j=0}^{n^{\delta}} \E_{\lfloor un\rfloor}\Big[\fo\Big(\frac{|X_{tn^2}|}{n}\Big)\exp\Big\{-\frac{\alpha}{n^{\beta}}\xi_{tn^2}(0)\Big\}\mathds{1}_{\{jn^{\beta} < \xi_{tn^2}(0) \leq (j+1)n^{\beta}\}}\Big]\\
&\;\lesssim\; \sum_{j=0}^{n^{\delta}}\exp\Big\{-\alpha j\Big\}\,\bb P\Big[jn^{\beta} < \xi_{tn^2}(0) \leq (j+1)n^{\beta}\Big]\,.
\end{split}
\end{equation*}
Applying Lemma~\ref{lem:smalllocaltime} with $\gamma=\beta$ is enough to deduce the claim.
\begin{proof}[Proof of Lemma~\ref{lem:smalllocaltime}]
We first derive the above statement for the local time of a \textit{discrete time} random walk, which we denote  by $(S_n)_{n\in \bb N}$. And denote  its local time until time $n$ of the point $a\in\bb Z$ by $\zeta_n(a)$.
By \cite[Equation (27)]{Takacs}, for any $k\in\bb N$, and any $a\geq 0$ we have the formula 
\begin{equation}\label{eq:formulalocaltime}
\bb P_0\big[\zeta_n(a)\geq k\big] \;=\; \bb P_0\big[S_{n-k+1} \geq a +k-1 \big]
+  \bb P_0\big[S_{n-k+1} > a +k-1 \big]\,.
\end{equation}
Note that by symmetry the same formula applies to the local time at zero provided that $S_0=a$. Denoting as above by $T$ the first hitting time of zero and applying the strong Markov property at time $T$, we therefore see that for any $k\geq 2$,
\begin{equation}\label{eq:localtimesmall}
\bb P_{\lfloor un \rfloor}\big[1\leq \zeta_{n^2}(0)< k\big]
\;=\; \E_{\lfloor un \rfloor }\Big[\mathds{1}_{\{T< n^2\}}\,\bb P_0[1\leq \zeta_{n^2-T}(0)< k]\Big]\,.
\end{equation}
Note that under $\bb P_0$ the local time at zero is always strictly positive. Using  \eqref{eq:formulalocaltime} with $a=0$ we deduce that
\begin{equation*}
\bb P_0\big[1\leq \zeta_{n^2-T}(0)< k\big]\; =\; \bb P_0\big[S_{n^2-T-k+1} < k -1\big]- \bb P_0\big[S_{n^2-T-k+1} > k -1\big]\,.
\end{equation*}
Adding and subtracting  the cumulative distribution function $\Phi$ of the standard normal distribution we can write
\begin{equation}\label{eq:difference}
\begin{split}
&\bb P_0\big[1\leq \zeta_{n^2-T}(0)< k\big]\\
&=\;\bb P_0\big[S_{n^2-T-k+1} < k -1\big]- \bb P_0\big[S_{n^2-T-k+1} > k -1\big]\\
&=\; \bb P_0[S_{n^2-T-k+1} < k -1]+ \bb P_0[S_{n^2-T-k+1} \leq k -1] -1\\
&=\; \I(k) +\II(k)\,,
\end{split}
\end{equation}
where
\begin{align*}
&\I(k) =  2\Phi\Big(\frac{k-1}{\sqrt{n^2-T-k +1}}\Big) -1\qquad \text{and}\\
&\II(k) = \bb P_0[S_{n^2-T-k+1} \leq k-1] +
\bb P_0[S_{n^2-T-k+1} < k-1] - 2\Phi\Big(\frac{k-1}{\sqrt{n^2-T-k +1}}\Big).
\end{align*}
Roughly speaking, we may say that $\I(k)$ and $\II(k)$ are close to one whenever $T$ is close to $n^2$. Which would be bad, since we are aiming to show that $\bb P_0\big[1\leq \zeta_{n^2-T}(0)< k\big]$ is small. 

Therefore,  to get a good upper bound on $\bb P_0\big[1\leq \zeta_{n^2-T}(0)< k\big]$, we need to first show  that the probability that $T$ is close to $n^2$ is small, afterwards it  remains to bound $\I$ and $\II$ for those values of $T$ that are reasonably far away from $n^2$.

Let $\gamma'\in (\gamma,1)$ as in the statement of the lemma. The Hitting Time Theorem (see \cite{HK}) states that
\begin{equation}\label{eq:hittingtime}
\bb P_{\lfloor un \rfloor}\big[T=\ell\big] \;=\; \frac{\lfloor un \rfloor}{\ell}\,\bb P_{\lfloor un\rfloor}\big[S_\ell=0\big]\,.
\end{equation}
Applying the Local Central Limit Theorem~\cite[Theorem 2.3.5]{Lawler} we see that
\begin{equation}\label{eq:Tsmall}
\bb P_{\lfloor un \rfloor}\big[\,T\in (n^2-n^{\gamma'}, n^2)\,\big]\; \lesssim\;
un\sum_{\ell= n^2-n^{\gamma'}}^{n^2}\frac{1}{\ell^{\frac32}} + O\big(n^{\gamma'-2}\big)\,.
\end{equation}
Here, one would actually get an extra factor  $e^{-\frac{u^2n^2}{2\ell}}$ in the sum above. Nevertheless, in the considered range of $\ell$'s,  this factor behaves like a constant, hence it is omitted.
Since $\ell\mapsto \frac{1}{\ell^{3/2}}$ is a decreasing function, we have the following inequality
\begin{equation*}
\sum_{\ell= n^2-n^{\gamma'}}^{n^2}\frac{1}{\ell^{\frac32}}\;\leq\;
\int_{n^2-n^{\gamma'}-1}^{n^2}\frac{dx}{x^{\frac32}}\;=\;\frac{1}{\sqrt{n^2-n^{\gamma'}-1}}-\frac{1}{n}\;=\; O\big(n^{\gamma'-2}\big)
\end{equation*}
from which we can infer that the probability in the left hand side of \eqref{eq:Tsmall} is of order $n^{\gamma' -2}$, which by our choice of $\gamma'$ is smaller than $n^{\gamma-1}$. This provides the first ingredient of the proof, that is, \eqref{eq:localtimesmall} is equal to
\begin{align}
&\E_{\lfloor un \rfloor }\Big[\mathds{1}_{\{T< n^2-n^{\gamma'}\}}\,\bb P_0[1\leq \zeta_{n^2-T}(0)< k]\Big]\notag \\
&+\E_{\lfloor un \rfloor }\Big[\mathds{1}_{\{T\in (n^2-n^{\gamma'}, n^2)\}}\,\bb P_0[1\leq \zeta_{n^2-T}(0)< k]\Big]\notag\\
&\lesssim\;\E_{\lfloor un \rfloor }\Big[\mathds{1}_{\{T< n^2-n^{\gamma'}\}}\,\bb P_0[1\leq \zeta_{n^2-T}(0)< k]\Big]+n^{\gamma-1}\,.\label{eq:4}
\end{align}
 We turn to the analysis of $\I$ and $\II$.
To continue, note that 
\begin{align*}
&\bb P_{\lfloor un \rfloor}\big[\,\zeta_{n^2}(0)\in (jn^\gamma,(j+1)n^\gamma]\,\big]\\
&= \;\bb P_{\lfloor un \rfloor}\big[\zeta_{n^2}(0)\in[1, (j+1)n^\gamma\big]
-\bb P_{\lfloor un \rfloor}\big[\zeta_{n^2}(0)\in [1,jn^\gamma]\,\big]\,,
\end{align*}
which can be written in terms of differences of \eqref{eq:4}. Thus, in order to get the desired bounds we need to estimate  $\I((j+1)n^\gamma)-\I(jn^\gamma)$ and $\II((j+1)n^\gamma)-\II(jn^\gamma)$. Using that $x\mapsto e^{-\frac{x^2}{2}}$ is decreasing in $|x|$, we see that
\begin{align}
&\big|\I((j+1)n^\gamma)-\I(jn^\gamma)\big|\;=\; \frac{1}{\sqrt{2\pi}}\int_{\frac{jn^\gamma}{\sqrt{n^2-T-jn^\gamma}}}^{\frac{(j+1)n^\gamma}{\sqrt{n^2-T-(j+1)n^\gamma}}}\; e^{-\frac{x^2}{2}}\; dx \notag\\
&\lesssim\; \Big(\frac{(j+1)n^\gamma}{\sqrt{n^2-T-(j+1)n^\gamma}}
-\frac{jn^\gamma}{\sqrt{n^2-T-jn^\gamma}}\Big)\exp\Big\{-\frac{j^2n^{2\gamma}}{2(n^2-T-jn^\gamma)}\Big\}\label{eq:2}\\
&\overset{\text{def}}{=}\;\A(j,j+1)\,.\notag
\end{align}
Invoking~\eqref{eq:hittingtime}, noting that $T\geq \lfloor un \rfloor$ if the random walk $S$ starts at $\lfloor un \rfloor$, and once again recalling the Local Central Limit Theorem, we can estimate
\begin{align}
&\E_{\lfloor un\rfloor}\Big[ \one_{\{0\leq  T \leq n^2-n^{\gamma'}\}}\big(\I((j+1)n^\gamma)-\I(jn^\gamma)\big)\Big]\label{eq:0} \\ 
&=\; \E_{\lfloor un\rfloor}\Big[ \one_{\{\lfloor un\rfloor \leq T \leq n^2-n^{\gamma'}\}}\big(\I((j+1)n^\gamma)-\I(jn^\gamma)\big)\Big]\notag \\
&\lesssim\; un \sum_{k=un}^{n^2-n^{\gamma'}}\label{eq:I}
\frac{1}{k^\frac32} \exp\Big\{-\frac{u^2n^2}{2k}\Big\}
\A(j,j+1)\,.
\end{align}
To estimate the rightmost term above, we first note that for all $k$ as above
\begin{equation*}
\exp\Big\{-\frac{j^2n^{2\gamma}}{2(n^2-k-jn^\gamma)}\Big\}\;\leq\; \exp\Big\{-\frac{j^2n^{2\gamma}}{2(n^2-un-jn^\gamma)}\Big\}\,.
\end{equation*}
Writing $k=\frac{k}{n^2}n^2$, factoring out a factor $n^2$ of the two square root terms in \eqref{eq:2}, and making a Riemann sum approximation, it is a long but elementary procedure to see that~\eqref{eq:I} is bounded from above by some constant times
\begin{align}
&un^{\gamma -1}\exp\Big\{-\frac{j^2n^{2\gamma}}{2(n^2-un-jn^\gamma)}\Big\}
\notag\\
&\times\int_{\frac{u}{n}}^{1-n^{\gamma'-2}}
\frac{1}{x^{\frac32}}\exp\Big\{-\frac{u^2}{2x}\Big\}
\Bigg(\frac{(j+1)}{\sqrt{1-x-(j+1)n^{\gamma-2}}}
-\frac{j}{\sqrt{1-x-jn^{\gamma-2}}}\Bigg)\, dx\,.\label{eq:3}
\end{align}
Note that $j < n^{\gamma' - \gamma}$, thus $1-x-(j+1)n^{\gamma -2}$ is always positive in the range of $x$ considered. Keeping this in mind one can check that $u$ times the  integral  in \eqref{eq:3} is uniformly bounded  in $n$ and $u$, therefore \eqref{eq:I} is bounded by a constant times 
\begin{align*}
n^{\gamma -1}\exp\Big\{-\frac{j^2n^{2\gamma}}{2(n^2-un-jn^\gamma)}\Big\}\;\lesssim\;
 n^{\gamma -1}\exp\Big\{-Cj^2n^{2(\gamma-1)}\Big\}\; \leq \; n^{\gamma -1}
\end{align*}
for some constant $C>0$, which finally gives us  the bound on \eqref{eq:0}. We now turn to the bound of $\II$, which is easier than the previous bound for $\I$, since there is no necessity to take differences. \textit{Grosso modo}, we may say that
\begin{align*}
\II(k)\;\lesssim\; \frac{1}{\sqrt{n^2-T-k+1}}
\end{align*}
by the usual Berry-Essen estimate for the random walk, see \cite[p. 137, Theorem 3.4.9]{Durrett} for instance (of course, some knowledge on $T$ is needed to make it precise). Therefore,
\begin{align*}
\E_{\lfloor un\rfloor}\Big[ \one_{\{0\leq  T \leq n^2-n^{\gamma'}\}}\II(jn^\gamma)\Big]&\;=\; 
\E_{\lfloor un\rfloor}\Big[ \one_{\{\lfloor un\rfloor\leq  T \leq n^2-n^{\gamma'}\}}\II(jn^\gamma)\Big]\\
&\;\lesssim\;
\sum_{\ell=\lfloor un\rfloor}^{n^2-n^{\gamma'}}\bb P_{\lfloor un\rfloor}\big[T=\ell\big] \frac{1}{\sqrt{n^2-\ell-jn^{\gamma}+1}}\,.
\end{align*}
Applying the Hitting Time Theorem, the last expression above is equal to
\begin{align*}
\lfloor un\rfloor\sum_{\ell=\lfloor un\rfloor}^{n^2-n^{\gamma'}}\frac{1}{\ell}\bb P_{\lfloor un\rfloor}\big[S_\ell=0\big] \frac{1}{\sqrt{n^2-\ell-jn^{\gamma}+1}}\,.
\end{align*}
By the Local Central Limit Theorem, the above is bounded by a constant times
\begin{align*}
&\lfloor un\rfloor\sum_{\ell=\lfloor un\rfloor}^{n^2-n^{\gamma'}}\frac{1}{\ell^{3/2}}\exp\Big\{-\frac{u^2n^2}{2\ell}\Big\} \frac{1}{\sqrt{n^2-\ell-jn^{\gamma}+1}}\\
&\lesssim\; \frac{u}{n^2}\sum_{\ell=\lfloor un\rfloor}^{n^2-n^{\gamma'}}\frac{1}{(\ell/n^2)^{3/2}}\exp\Big\{-\frac{u^2}{2(\ell/n^2)}\Big\} \frac{1}{\sqrt{n^2(1-\frac{\ell-jn^{\gamma}+1}{n^2})}}\\
&= \frac{1}{n} \times \frac{u}{n^2}\sum_{\ell=\lfloor un\rfloor}^{n^2-n^{\gamma'}}\frac{1}{(\ell/n^2)^{3/2}}\exp\Big\{-\frac{u^2}{2(\ell/n^2)}\Big\} \frac{1}{\sqrt{(1-\frac{\ell-jn^{\gamma}+1}{n^2})}}\,.
\end{align*}
Note now that the second factor above is a Riemann sum approximation similar to \eqref{eq:3}.
Thus, uniformly in $j < n^{\gamma'-\gamma}$,
\begin{equation*}
\E_{\lfloor un\rfloor }\Big[\mathds{1}_{\{T\leq n^2-n^{\gamma'}\}} \II(jn^\gamma)\Big]\;
\lesssim\; n^{-1}\,,
\end{equation*}
 immediately implying that 
\begin{equation*}
E_{\lfloor un\rfloor }\Big[\mathds{1}_{\{T\leq n^2-n^{\gamma'}\}} \big(\II(j+1)n^ {\gamma})-\II(jn^\gamma)\big)\Big]\;
\lesssim\; n^{-1}\,,
\end{equation*} 
from which the result follows for the discrete time random walk.  A standard Poissonisation argument now begets the result for the continuous time case.
\end{proof}

\subsection{Proof of Lemma \texorpdfstring{\ref{lem:oddapprox}}{5.3} for \texorpdfstring{$\beta\in [1,\infty]$}{beta bigger 1}}
\begin{proof}
\textbf{Case $\beta=1$.}
Using that $x\mapsto e^{-x}$ defined on $[0,\infty)$ is bounded by one and Lipschitz continuous with Lipschitz constant one, we can estimate\break  $|{\bf P}^n_t \fodd (x) - {Q}_t\fo  \bigP{x}{n}|\lesssim \I + \II$, where
\begin{equation*}
\begin{aligned}
\I&\;=\; \E_{x,x/n}\Big[ \Big|\fodd\Big(\frac1n\Big[\Big|X_{tn^2}+\frac12\Big| -\frac12\Big] \Big)- \fo\Big(\frac1n\big|B_{tn^2}\big|\Big)\Big|\Big] \quad \text{and}\\
\II&\;=\;  \E_{x,x/n}  \Big[\frac1n\big|\xi_{tn^2}(\{-1,0\})-2L_{tn^2}(0)\big|\Big]\,.
\end{aligned}
\end{equation*}
Here, $\E_{x,x/n}$ denotes the expectation induced by the coupling introduced in Proposition \ref{prop:L1est} of $X$ and $B$.
We first estimate $\I$. To that end denote the Lipschitz constant of $f$ by $L$, and note that for any number $a\in \bb Z$ we have the estimate $\big||a+1/2|-1/2 - |a|\big| \leq 1$. Thus, 
\begin{align*}
\I & \;\leq\;  \frac{L}{n} \times \E_{x,x/n}\Big[ \Big|\Big(\Big|X_{tn^2}+\frac12\Big|\Big) -\frac12 - |B_{tn^2}|\Big|\Big]\\
&\;\leq\; \frac{L}{n} + \frac{L}{n}\times\E_{x,x/n}\Big[ \Big| |X_{tn^2}| - |B_{tn^2}|\Big|\Big]\,.
\end{align*}
It now only remains to apply Proposition \ref{prop:L1est} to deduce the desired estimate for $\I$. To estimate $\II$ we write
\begin{equation*}
\II \;\leq\; \frac1n \E_{x,x/n} \big[ \,|\xi_{tn^2}(\{-1,0\}) - 2 \xi_{tn^2}(0)|\,\big]
+ \frac2n \E_{x,x/n} \big[\,|\xi_{tn^2}(0)- L_{tn^2}(0)|\,\big]\,.
\end{equation*}
The result therefore follows from an application of Propositions \ref{prop:L1est} and \ref{prop:localtimecontinuity}.\medskip

\noindent
\textbf{Case $\beta\in(1,\infty]$.}
We adopt the abbreviation 
\begin{equation*}
\frac1n\Big[\Big|X_{tn^2}+\frac12\Big| -\frac12\Big]\;=\; |X_{tn^2}|_{(n)}\,.
\end{equation*} 
Using as above that $x\mapsto e^{-x}$ is Lipschitz continuous and bounded by $1$ on $[0,\infty)$, as well as the boundedness of $f$, we see that
\begin{align*}
&\Big|\E_{\lfloor un\rfloor}\Big[\fodd\Big(|X_{tn^2}|_{(n)} \Big)\exp\Big\{-\frac{\alpha}{n^{\beta}}\xi_{tn^2}(\{-1,0\})\Big\}]
- \E_{\lfloor un\rfloor}\Big[\fodd\Big(|X_{tn^2}|_{(n)} \Big)\Big]\Big|\\
&\leq\; C\times \E_{\lfloor un\rfloor}\Big[\frac{\xi_{tn^2}(\{-1,0\})}{n^{\beta}}\Big]\; \leq\; Cn^{1-\beta}\,.
\end{align*}
Here, we made use of Proposition \ref{prop:localtime} to arrive at the last estimate.  To conclude one may now proceed as in the proof of Lemma~\ref{lem:evenapprox}.
\end{proof}
\subsection{Convergence of the slow bond random walk at a fixed time}
We have gathered all ingredients to prove   the main result of this section, which immediately implies  Theorem~\ref{thmBE}.
\begin{theorem}
\label{thm:onedimdistributions}
Let $u>0$ and let $f\in \LSNOB$. By $\PSNOB$  denote the semigroup of the snapping out Brownian motion of parameter $\kappa=2\alpha$. 
Then, for all $t>0$, we have the estimates
\begin{itemize}
\item If $\beta\in[0, 1)$, then 
\begin{equation*}
\big|{\bf P}^n_t f  \bigP{\lfloor un \rfloor}{n} - {P}_t f (u)\big|\;\lesssim \;
\max\big\{n^{-1}, n^{\beta-1}\big\}\;=\; n^{\beta-1}\,. 
\end{equation*}
\item If $\beta= 1$, then  for all $\delta>0$,
\begin{equation*}
 \big|{\bf P}^n_t f \bigP{\lfloor un\rfloor}{n} - \PSNOB f (u)\big|\;\lesssim \;n^{-1/2 + \delta}\,.
\end{equation*} 
\item If $\beta\in(1,\infty]$, then
 \begin{equation*}
 \big|{\bf P}^n_t f \bigP{\lfloor un\rfloor}{n} - \E_u[f(|B_t|)]\big|
 \;\lesssim \; \max\{n^{-1},n^{1-\beta}\}\,.
\end{equation*}
\end{itemize} 
\end{theorem}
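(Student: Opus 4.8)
The plan is to assemble Theorem~\ref{thm:onedimdistributions} from the two decomposition lemmas already proved, namely Lemma~\ref{lem:evenapprox} for the even part and Lemma~\ref{lem:oddapprox} for the odd part, together with Proposition~\ref{prop23} (for $\beta=1$) to identify the limiting object with the SNOB semigroup. First I would recall that by linearity of the semi-discrete scheme \eqref{pde15} and the parity-invariance of ${\bf P}^n_t$ from Proposition~\ref{prop51}, we have
\begin{equation*}
{\bf P}^n_t f\bigP{\lfloor un\rfloor}{n}\;=\;{\bf P}^n_t\feven\bigP{\lfloor un\rfloor}{n}+{\bf P}^n_t\fodd\bigP{\lfloor un\rfloor}{n}\,,
\end{equation*}
so it suffices to control each summand and then match the sum with the appropriate limiting semigroup applied to $\fe+\fo=f$. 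The even part is handled uniformly in $\beta$ by Lemma~\ref{lem:evenapprox}, which gives $|{\bf P}^n_t\feven\bigP{\lfloor un\rfloor}{n}-P_t\fe(u)|\lesssim n^{-1}$; note that since $\fe$ is even in the usual sense, $P_t\fe(u)=\E_u[\fe(B_t)]=\E_u[\fe(|B_t|)]$, and this same expression is the even part of both the reflected BM semigroup and the SNOB semigroup (the latter because plugging an even $f$ into \eqref{formula} collapses the two terms into $\E_u[f(|B_t|)]$). For the odd part I would invoke Lemma~\ref{lem:oddapprox} in its three regimes, with rates $n^{\beta-1}$, $n^{-1/2+\delta}$, and $\max\{n^{-1},n^{1-\beta}\}$ respectively.

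The remaining point is identification of the limit in the three cases. For $\beta\in[0,1)$: Lemma~\ref{lem:oddapprox} gives ${\bf P}^n_t\fodd\to P_t\fo(u)=\E_u[\fo(B_t)]$, so the even and odd parts recombine to $P_t\fe(u)+P_t\fo(u)=P_tf(u)$; the overall rate is $\max\{n^{-1},n^{\beta-1}\}=n^{\beta-1}$ since $\beta<1$. For $\beta\in(1,\infty]$: the odd part converges to $\E_u[\fo(|B_t|)]$, so the sum is $\E_u[\fe(|B_t|)]+\E_u[\fo(|B_t|)]=\E_u[f(|B_t|)]=\PREF f(u)$, with rate $\max\{n^{-1},n^{1-\beta}\}$ (which is $n^{-1}$ once $\beta\ge 2$). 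For the critical case $\beta=1$: the odd part converges to $Q_t\fo(u)=\E_u[\fo(|B_t|)\,e^{-2\alpha L(0,t)}]$, so the total limit is
\begin{equation*}
\E_u[\fe(|B_t|)]+\E_u\big[\fo(|B_t|)\,e^{-2\alpha L(0,t)}\big]\,,
\end{equation*}
and I would check this equals the SNOB semigroup $\PSNOB f(u)$ with $\kappa=2\alpha$ by expanding \eqref{formula}: writing $f=\fe+\fo$ and $f(\pm\sgn(u)|B_t|)$ accordingly, the $\fe$-contributions from the two terms in \eqref{formula} add to $\E_u[\fe(|B_t|)]$ (the $e^{-\kappa L}$ factors cancel), while the $\fo$-contributions combine, using $\fo(-v)=-\fo(v)$, to $\E_u[\tfrac{(1+e^{-\kappa L})-(1-e^{-\kappa L})}{2}\sgn(u)\fo(\sgn(u)|B_t|)]=\E_u[\fo(|B_t|)e^{-\kappa L(0,t)}]$. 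With $\kappa=2\alpha$ this is exactly $Q_t\fo(u)$; the rate is $n^{-1/2+\delta}$, dominating $n^{-1}$ from the even part.

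I expect no serious obstacle here since all the analytic work is already encapsulated in Lemmas~\ref{lem:evenapprox} and~\ref{lem:oddapprox}; the only care needed is the bookkeeping of parities — in particular keeping straight that the discrete notion of parity is centered at $-\tfrac{1}{2n}$ while the continuous one is centered at $0$, which is precisely why the $O(n^{-1})$ error in \eqref{eq47} (hence in Lemma~\ref{lem:evenapprox}) appears and why it is harmless against the dominant odd-part rates. I would also note that the statement is phrased for $u>0$ only, matching Lemma~\ref{lem:oddapprox}; the case $u<0$ follows by the symmetry $x\mapsto -1-x$ of the slow bond walk used in Proposition~\ref{prop51}, and $u=0$ (i.e. starting from $\lfloor un\rfloor=0$) is covered by continuity of all the semigroups involved. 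Finally, Theorem~\ref{thmBE} is immediate from this: taking the supremum over $f$ with $\|f\|_{\mathrm{BL}}\le 1$ in each displayed estimate turns the pointwise bounds into the $d_{\mathrm{BL}}$ bounds, once one observes that the embedded measure $\mu^{\mathrm{slow}}_{tn^2}$ integrated against $f$ is exactly ${\bf P}^n_tf\bigP{\lfloor un\rfloor}{n}$.
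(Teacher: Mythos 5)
Your proposal is correct and follows essentially the same route as the paper: decompose $f$ into its (discrete) even and odd parts, apply Lemma~\ref{lem:evenapprox} and Lemma~\ref{lem:oddapprox} respectively, and then reconcile the resulting limits with the appropriate semigroup using the identity $f(u)+f(-u)=f(|u|)+f(-|u|)$ and, in the critical case, the SNOB formula \eqref{formula}. The only cosmetic difference is that for $\beta=1$ you expand \eqref{formula} to arrive at $P_t\fe+Q_t\fo$, whereas the paper runs the same algebra in the reverse direction; this is the same computation.
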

\begin{proof}
\textbf{Case $\beta\in[0,1)$}.
Writing ${\bf P}^n_t f \bigP{x}{n} = {\bf P}^n_t \fodd \bigP{x}{n} + {\bf P}^n_t \feven \bigP{x}{n}$,  we can apply Lemmas~\ref{lem:evenapprox} and \ref{lem:oddapprox} to infer that ${\bf P}^n_t f \bigP{\lfloor un\rfloor}{n}$ indeed converges to   $ P_t \fe (u) + P_t \fo (u) = P_t f(u) $ at the desired rate. \medskip

\textbf{Case $\beta=1$}.
Writing ${\bf P}^n_t f \bigP{x}{n} = {\bf P}^n_t \fodd \bigP{x}{n} + {\bf P}^n_t \feven \bigP{x}{n}$,   Lemmas \ref{lem:evenapprox} and \ref{lem:oddapprox} imply  that ${\bf P}^n_t f \bigP{\lfloor un\rfloor}{n}$  converges to   $ P_t \fe (u) + {Q}_t\fo  (u) $ at the desired rate.  

It therefore only remains to check that $  P_t \fe + {Q}_t \fo = \PSNOB f$, which can be verified via the following direct computation. Note that
\begin{equation}\label{id}
f(u)+f(-u)\;=\;f(|u|)+f(-|u|)\,,\qquad \forall\, u\in \bb R\,,
\end{equation}
 and recall \eqref{formula}. Then,
\begin{align*}
&{\bf P}_t \fe(u) + {Q}_t \fo(u)\\
&=\;\bb E_u\Big[\frac{f(B_t)+f(-B_t)}{2}\Big]+\bb E_u\Big[\frac{f(|B_t|)-f(-|B_t|)}{2}\exp\big\{-2\alpha L_t(0)\big\}\Big]\\
&=\;\bb E_u\Big[\frac{f(|B_t|)+f(-|B_t|)}{2}\Big]+\bb E_u\Big[\frac{f(|B_t|)-f(-|B_t|)}{2}\exp\big\{-2\alpha L_t(0)\big\}\Big]\\
&=\;\bb E_u\Big[\frac{1+\exp\big\{-2\alpha L_t(0)\big\}}{2}f(|B_t|)\Big]+\bb E_u\Big[\frac{1-\exp\big\{-2\alpha L_t(0)\big\}}{2}f(-|B_t|)\Big]\\
&=\;\PSNOB f(u)\,.
\end{align*}

 \textbf{Case $\beta\in(1,\infty]$}. It follows from Lemmas \ref{lem:evenapprox} and \ref{lem:oddapprox} that there exists a constant $C>0$ such that for all $t>0$ and all $n$ large enough
\begin{equation*}
 \big|{\bf P}^n_t f \bigP{\lfloor un\rfloor}{n} -\E_u[\fe(B_t)]-  \E_u[\fo(|B_t|)]\big|\;\lesssim\; \max\big\{n^{-1}, n^{1-\beta}\big\}\,.
\end{equation*}
To conclude the proof it therefore only remains to show that 
\begin{equation*}
\E_u\big[\fe(B_t)\big]\;= \;\E_u\big[\fe(|B_t|)\big]\,,
\end{equation*}
which follows by the observation \eqref{id}.
\end{proof}

\section{CLT  for finite-dimensional distributions}\label{s6}
In what follows, since there is no necessity to specify the precise value of $\beta$, we will use  $\BSB$ to denote the respective limiting process in Theorem \ref{thm21}, which can either be the BM, the snapping out BM or the reflected BM. The same applies for the notation $\XSB$ for the slow bond RW.

Fix $k\in \bb N$ and times $0=t_0 < t_1 < \ldots < t_k\leq 1$. We will  show in this section that 
\begin{equation}\label{eq:finitedim}
\frac1n \big(\XSB_{t_1n^2}, \XSB_{t_2n^2},\ldots, \XSB_{t_k n^2}\big)\Longrightarrow 
\big(\BSB_{t_1}, \BSB_{t_2},\ldots, \BSB_{t_k}\big),\,\quad \text{as }n\to\infty\,,
\end{equation}
where the arrow above denotes weak convergence. Let us introduce some notation. Given a process $Z$ and an independent copy $\hat{Z}$ of $Z$, denote by
 $\bb E^{\hat{Z},\,t}_{z}$ the expectation with respect to the process $\hat{Z}$  \textit{started at time} $t$ at the position $z$. This is not a standard notation, but it will be suitable for our purposes.
 
  For  $j\in\{0,\ldots, k-1\}$, and  $f\in \LSNOB$, we have by the Markov property that
\begin{equation}\label{eq:markovfinite}
\E_{x}\Big[f\Big(\pfrac{\XSB_{t_{j+1}n^2}- \XSB_{t_j n^2}}{n}\Big)\Big]
\;=\;\E_x\Big[\E_{\XSB_{t_j n^2}}^{\YSB,\, t_j n^2}\Big[f\Big(\pfrac{\YSB_{t_{j+1}n^2}- \XSB_{t_j n^2}}{n}\Big)\Big]\Big]\,,
\end{equation}
 Choose $x$ to be of the form $\lfloor un\rfloor$ with, let us say, $u>0$. We claim that, since the convergence in Theorem \ref{thm:onedimdistributions} is uniform in the starting point, it follows that the above converges to
\begin{equation}\label{eq:slowdifference}
\E_u \Big[f\big(\BSB_{t_{j+1}}-\BSB_{t_j}\big)\Big].
\end{equation}
Indeed, we have on the one hand that
\begin{equation*}
\Big|\E_x\Big[\E^{\YSB, \,t_jn^2}_{\XSB_{t_j n^2}}\Big[f\Big(\pfrac{\YSB_{t_{j+1}n^2}- \XSB_{t_j n^2}}{n}\Big)\Big]\Big] -
\E_{x/n}\Big[\E^{\BSB,\, t_j}_{\XSB_{t_j n^2}/n}\Big[f\Big(\BSB_{t_{j+1}}- \pfrac{\XSB_{t_j n^2}}{n}\Big)\Big]\Big]\Big|
\end{equation*}
converges to zero, which is a consequence of the uniformity in Theorem \ref{thm:onedimdistributions} alluded to above.
On the other hand, denoting by $\BSBhat$ an independent copy of $\BSB$, we affirm that
\begin{equation}\label{eq:64}
\Big|\E_{\frac{x}{n}}\Big[\E^{\BSB,\, t_j}_{\XSB_{t_j n^2}/n}\Big[f\Big(\BSB_{t_{j+1}}- \pfrac{\XSB_{t_j n^2}}{n}\Big)\Big]\Big] - \E_{\frac{x}{n}}\Big[\E^{\BSBhat, \, t_j}_{\BSB_{t_j }}\Big[f\Big(\BSBhat_{t_{j+1}}- \BSB_{t_j}\Big)\Big]\Big]\Big|
\end{equation}
goes to zero. To see this, define  the function $g$  by
\begin{equation*}
g(x)\;=\; \E_x^{\BSBhat,\, t_j}\big[f(\BSBhat_{t_{j+1}}- x)\big]\,.
\end{equation*}
which belongs to $\LSNOB$ due to Corollary~\ref{cor:Lipschitz}.
Then, \eqref{eq:64} can be written as 
\begin{equation*}
\Big|\E_{\frac{x}{n}}\Big[g\Big(\pfrac{\XSB_{t_j n^2}}{n}\Big)\Big] - \E_{\frac{x}{n}}[g\big(\BSB_{t_j})\big]\Big|\,,
\end{equation*}
which proves the claim by applying Theorem~\ref{thm:onedimdistributions}. 
Now, note that $\XSB$ has independent increments, so that the above arguments yields that
\begin{equation}\label{eq:finitedimdif}
\begin{split}
\frac1n (\XSB_{t_1n^2},\,& \XSB_{t_2n^2}-\XSB_{t_1n^2},\ldots, \XSB_{t_kn^2}-\XSB_{t_{k-1}n^2})\\
&\Rightarrow (\BSB_{t_1}, \BSB_{t_2}-\BSB_{t_1},\ldots, \BSB_{t_k}-\BSB_{t_{k-1}})\,,
\end{split}
\end{equation}
as $n$ tends to infinity. The desired convergence of the finite dimensional distributions now follows, since \eqref{eq:finitedim} is the image of the left hand side of \eqref{eq:finitedimdif} under a linear map.

\section{Tightness in the \texorpdfstring{$J_1$}{J1}-topology}\label{sec:tight}
In this section we show that the sequence  $\{n^{-1}\XSB_{tn^2}:t\in[0,1]\}$ is tight in the $J_1$-topology of Skorohod  of $\mathscr{D}([0,1], \bb R)$. To do so, we make use of the following criterion that can be found in \cite[Theorem 13.5]{Bill}.
\begin{proposition}
Consider a sequence $(X^n)_{n\in\bb N}$ and a process $X$ in $\mathscr{D}([0,1], \bb R)$. Assume that the finite dimensional distributions of $(X^n)_{n\in\bb N}$ converge to those of $X$, and assume that $X$ is almost surely continuous at $t=1$. Moreover assume that there are $\beta\geq 0$, $\alpha >1/2$ and a non-decreasing continuous function $F$ such that for all $r\leq s\leq t$, all $n\geq 1$, and all $x\in \bb Z$,
\begin{equation}\label{eq:momentcond}
\E_x \Big[|X_s^n-X_r^n|^{2\beta}\,|X_t^n-X_s^n|^{2\beta}\Big]\;\leq\; \big[F(t)-F(r)\big]^{2\alpha}\,.
\end{equation}
Then, the sequence $(X^n)_{n\in\bb N}$ converges to $X$ in the $J_1$-topology of Skorohod of\break $\mathscr{D}([0,1], \bb R)$.
\end{proposition}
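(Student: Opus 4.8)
The plan is to deduce this statement directly from the classical tightness theorem in \cite[Theorem 13.5]{Bill}. That theorem asserts that if the finite dimensional distributions of $(X^n)_{n\in\bb N}$ converge to those of a limit $X$, if $X$ is a.s.\ continuous at $t=1$, and if a moment estimate of the form
\begin{equation*}
\E\big[|X_s^n-X_r^n|^{\gamma}\,|X_t^n-X_s^n|^{\gamma}\big]\;\leq\;\big[F(t)-F(r)\big]^{2\alpha}
\end{equation*}
holds for some $\gamma\geq 0$, some $\alpha>\tfrac12$, and some non-decreasing continuous $F$, then $X^n\Rightarrow X$ in the $J_1$-topology of $\mathscr{D}([0,1],\bb R)$. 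So the only thing to verify is that our hypothesis \eqref{eq:momentcond} is an instance of that moment condition with $\gamma=2\beta$: the left-hand side of \eqref{eq:momentcond} is exactly $\E_x[|X_s^n-X_r^n|^{2\beta}\,|X_t^n-X_s^n|^{2\beta}]$, and the hypothesis that this be bounded by $[F(t)-F(r)]^{2\alpha}$ \emph{uniformly in $x$ and in $n$} is precisely what \cite[Theorem 13.5]{Bill} requires (there the processes have a fixed law, here we have an extra parameter $x$, but a bound uniform in $x$ still gives the needed bound for each starting point). Thus, for every fixed $x$, the sequence $(X^n)_{n\in\bb N}$ satisfies the hypotheses of \cite[Theorem 13.5]{Bill}, and the conclusion follows.

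Concretely I would proceed as follows. First I would recall the statement of \cite[Theorem 13.5]{Bill} and note that its moment hypothesis is stated with an exponent written there as $\beta$ on each increment (total exponent $2\beta$); to avoid a notational clash with the model parameter $\beta$, I would simply observe that taking that exponent to be the quantity denoted $2\beta$ in \eqref{eq:momentcond} makes the two inequalities coincide verbatim. Second, I would note that $\alpha>\tfrac12$ and the non-decreasing continuous $F$ required by \cite{Bill} are provided by hypothesis, and that the condition on $\alpha$ is exactly what guarantees the modulus-of-continuity control $w''(X^n,\delta)$ is small with high probability. Third, the convergence of finite dimensional distributions and the a.s.\ continuity of $X$ at $t=1$ are assumed, so all hypotheses of \cite[Theorem 13.5]{Bill} are in place. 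Applying that theorem yields $X^n\Rightarrow X$ in the $J_1$-topology of $\mathscr{D}([0,1],\bb R)$, which is the assertion.

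There is essentially no obstacle here: the proposition is a direct repackaging of \cite[Theorem 13.5]{Bill} tailored to the present setting, where the only cosmetic adjustments are (i) the relabelling of the increment exponent as $2\beta$ and (ii) the presence of the starting point $x$, handled by requiring the bound to be uniform in $x$. The real work — establishing the moment estimate \eqref{eq:momentcond} for the rescaled slow bond random walk via the Burkholder-Davis-Gundy inequality applied to the Dynkin martingale, and checking the finite dimensional convergence — is carried out elsewhere in the paper (Section~\ref{s6} and the remainder of Section~\ref{sec:tight}); this proposition merely records the abstract criterion that those inputs feed into.
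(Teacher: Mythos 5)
Your proposal is correct and matches the paper exactly: the paper offers no proof of this proposition, but simply attributes it to \cite[Theorem 13.5]{Bill}, which is precisely the identification you make. The only content is the verbatim matching of hypotheses (moment bound with exponent $2\beta$ on each increment, $\alpha>1/2$, nondecreasing continuous $F$, convergence of finite dimensional distributions, a.s.\ continuity at $t=1$), which you carry out correctly.
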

As a consequence of the above result we only need to establish the 
the moment condition \eqref{eq:momentcond}. We claim that it is enough to show that there is a constant $C$ such that for any pair of times $0\leq s \leq t$, and any starting point $x$, the following inequality holds:
\begin{equation}\label{eq:realmomentcond}
\E_x\Big[\Big|\frac{\XSB_{tn^2}}{n}-\frac{\XSB_{sn^2}}{n}\Big|^2\Big]\;\leq\; C|t-s|\,.
\end{equation}
Indeed assume that \eqref{eq:momentcond} holds and let $r\leq s\leq t$. Then the Markov property applied at time $sn^2$ yields
\begin{align*}
&\frac{1}{n^4}\E_x\Big[|\XSB_{sn^2}-\XSB_{rn^2}|^2\,|\XSB_{tn^2}-\XSB_{sn^2}|^2\Big]\\
&=\; \frac{1}{n^4}\E_x\Big[|\XSB_{sn^2}-\XSB_{rn^2}|^2\,
\E_{\XSB_{sn^2}}\big[|\XSB_{tn^2}-\XSB_{sn^2}|^2\big]\Big]\\
& \leq \;C^2|t-s|\, |s-r|\;\leq \; C^2|t-r|^2\,,
\end{align*}
hence the claim follows.
To establish \eqref{eq:realmomentcond}, recall that Dynkin's formula yields that for any function $f$ in the domain of $\mathsf{L}_n$, there is a martingale $\mathscr{M}(f)$ such that
\begin{equation}\label{eq:dynkin}
f\Big(\frac{\XSB_{tn^2}}{n}\Big)\;=\;f\Big(\frac{\XSB_0}{n}\Big) + \int_0^{tn^2} \mathsf{L}_n f\Big(\frac{\XSB_s}{n}\Big)\, ds + \mathscr{M}_{tn^2}(f)\,.
\end{equation}
Our case is the case in which $f$ is the identity. Note that in this case the definition of $\mathsf{L}_n$ implies that
\begin{equation*}
\mathsf{L}_n f\Big(\frac{x}{n}\Big)\;=\;  \frac1n\big[\xi_{x,x+1}-\xi_{x,x-1}\big]\;=\;\frac1n\times\begin{cases}
\frac12 -\frac{1}{2n^\beta}, &\text{if }x=0,\\
\frac{1}{2n^\beta}-\frac12, &\text{if }x=-1,\\
0, &\text{otherwise.}
\end{cases}
\end{equation*}
Denoting by $\ell$ the local time of  $\XSB$, the above considerations then show that the right hand side of \eqref{eq:dynkin} equals 
\begin{equation*}
f\Big(\frac{\XSB_0}{n}\Big) + \frac1n\Big[\frac12-\frac{1}{2n^\beta}\Big]\big[\ell_{tn^2}(0)-\ell_{tn^2}(-1)\big] +\mathscr{M}_{tn^2}(f)\,.
\end{equation*}
Thus, to show \eqref{eq:realmomentcond} it is enough to bound the second moment of 
\begin{equation*}
\frac1n\big[\ell_{sn^2,tn^2}(0)-\ell_{sn^2,tn^2}(-1)\big]\quad \text{and}\quad
\big[\mathscr{M}_{tn^2}(f)- \mathscr{M}_{sn^2}(f)\big]\,.
\end{equation*}
Here, we used the notation $\ell_{s,t}$ to denote the local time of $\XSB$ between times $s$ and $t$. 
We first analyse the local time term above. To that end, we note that Proposition \ref{prop:lumping} and the discussion following it yield a coupling between $(\ell_{tn^2}(0) + \ell_{tn^2}(-1))_{t\geq 0}$ and $(\xi_{tn^2}(0)+\xi_{tn^2}(-1))_{t\geq 0}$ under which these processes are equal. We recall that $\xi$ denotes the local time process of the usual\break continuous-time symmetric random walk.
Since
\begin{equation*}
|\ell_{sn^2,tn^2}(0)-\ell_{sn^2,tn^2}(-1)|\;\leq\; |\ell_{sn^2,tn^2}(0)+\ell_{sn^2,tn^2}(-1)|
\end{equation*}
and $x\mapsto x^2$ is a monotone function of the modulus of $x$, we see that it is sufficient to estimate the second moment of the sum of the respective local times between times $sn^2$ and $tn^2$. However, by the coupling just mentioned it is sufficient to estimate
\begin{equation*}
\frac{1}{n^2}\E_x \Big[(\xi_{sn^2,tn^2}(0) + \xi_{sn^2,tn^2}(-1))^2\Big]\,,
\end{equation*}
and we obtain the desired estimate as a consequence of Proposition \ref{prop:localtime}. 
We turn to the analysis of the martingale term. To that end we apply the following version of the Burkholder-Davis-Gundy inequality:
\begin{theorem}\label{thm:BDG}
Let $M$ be a c\`adl\`ag square integrable martingale. For any $p>0$ there exists a constant $C=C(p)>0$ such that for all $T>0$,
\begin{equation*}
\E \Big[\sup_{0\leq t\leq T} |M_t|^p\Big]
\;\leq\; C\,\E \Big[ [M,M]_T^{p/2}\Big]\,.
\end{equation*}
\end{theorem}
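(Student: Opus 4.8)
The plan is to split according to whether $p\ge 2$ or $0<p<2$. In either range one reduces immediately to $M_0=0$ (the constant $M_0$ contributes only a $p$-dependent factor) and to the case $\E[[M,M]_T^{p/2}]<\infty$, since otherwise there is nothing to prove. Throughout write $M_T^*=\sup_{0\le t\le T}|M_t|$.

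For $p\ge 2$ the map $x\mapsto|x|^p$ is $C^2$, so It\^o's formula for c\`adl\`ag semimartingales writes $|M_t|^p$ as a local martingale plus a bounded-variation term that, via the elementary inequality $\big||x+h|^p-|x|^p-p|x|^{p-2}xh\big|\lesssim|x|^{p-2}h^2+|h|^p$, is controlled by a constant times $\int_0^t|M_{s-}|^{p-2}\,d[M,M]_s+\sum_{s\le t}|\Delta M_s|^p$. Using the pathwise bounds $\int_0^t|M_{s-}|^{p-2}\,d[M,M]_s\le(M_T^*)^{p-2}[M,M]_T$ for $t\le T$ and $\sum_{s\le T}|\Delta M_s|^p\le[M,M]_T^{p/2}$ (each jump being at most $[M,M]_T^{1/2}$ and $\sum_{s\le T}(\Delta M_s)^2\le[M,M]_T$), taking expectations and then applying Doob's $L^p$ maximal inequality yields
\begin{equation*}
\E\big[(M_T^*)^p\big]\;\lesssim\;\E\big[(M_T^*)^{p-2}[M,M]_T\big]+\E\big[[M,M]_T^{p/2}\big].
\end{equation*}
When $p=2$ this is already the assertion; for $p>2$ one applies H\"older's inequality with exponents $\tfrac{p}{p-2}$ and $\tfrac{p}{2}$ to the first term on the right and then Young's inequality to absorb a small multiple of $\E[(M_T^*)^p]$ into the left-hand side. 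The delicate point — which I expect to be the main obstacle — is that this absorption is legitimate only once $\E[(M_T^*)^p]$ is known a priori to be finite; one therefore first runs the argument for the stopped martingale $M^{\tau_N}$ with $\tau_N=\inf\{t:|M_t|>N\}$ and then lets $N\to\infty$ by monotone convergence, using $[M^{\tau_N},M^{\tau_N}]_T\le[M,M]_T$.

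For $0<p<2$ I would instead use Lenglart's domination inequality. Since $M^2-[M,M]$ is a local martingale vanishing at $0$, optional stopping gives $\E[M_\tau^2]\le\E[[M,M]_\tau]$ for every bounded stopping time $\tau$, i.e.\ the process $t\mapsto M_t^2$ is Lenglart-dominated by the nondecreasing process $t\mapsto[M,M]_t$. Lenglart's inequality then gives $\E\big[(M_T^*)^p\big]=\E\big[\big(\sup_{0\le t\le T}M_t^2\big)^{p/2}\big]\lesssim\E\big[[M,M]_T^{p/2}\big]$ with a constant depending only on $p\in(0,2)$, which is the claim. Alternatively, both ranges can be handled simultaneously through Burkholder's good-$\lambda$ inequality comparing $M_T^*$ with $[M,M]_T^{1/2}$; and for the use made of this theorem in Section~\ref{sec:tight} only $p=2$ is needed, where the inequality is immediate from Doob's inequality and the identity $\E[M_T^2]=\E[[M,M]_T]$.
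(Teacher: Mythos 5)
The paper states this as the classical Burkholder--Davis--Gundy inequality and does not prove it (it is a cited standard fact, used in Section~\ref{sec:tight} only with $p=2$), so there is no ``paper proof'' to compare against. Your sketch is nevertheless a correct and standard route. For $p\ge2$ the It\^o-formula computation is right: the jump correction is controlled by the stated pointwise inequality, the two pathwise bounds on the continuous-variation and jump terms are exactly the usual ones, Doob's $L^p$ maximal inequality applies since $p>1$, and the H\"older/Young absorption together with the stopping-time localization $\tau_N=\inf\{t:|M_t|>N\}$ is the standard way to make the a-priori finiteness rigorous; you correctly observe that $(M^{\tau_N})^*_T\le N+|\Delta M_{\tau_N}|\le N+[M,M]_T^{1/2}$, which has finite $p$-th moment under the running assumption $\E[[M,M]_T^{p/2}]<\infty$.

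One point deserves a flag in the $0<p<2$ branch. Lenglart's inequality in its sharpest form requires the dominating nondecreasing process to be \emph{predictable}, whereas $[M,M]$ is only optional. The version you actually need is the one for adapted nondecreasing $A$, which gives, for all $\lambda,\mu>0$,
\begin{equation*}
\PP\big[\,\sup_{t\le T}X_t\ge\lambda\,\big]\;\le\;\frac{1}{\lambda}\,\E\big[A_T\wedge\mu\big]+\PP\big[A_T\ge\mu\big],
\end{equation*}
and integrating this (with $\mu=\lambda$) against $p'\lambda^{p'-1}\,d\lambda$ for $p'=p/2\in(0,1)$ produces $\E[(\sup_{t\le T}X_t)^{p'}]\le\tfrac{2-p'}{1-p'}\E[A_T^{p'}]$. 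Applied to $X=M^2$, $A=[M,M]$ this yields the claim. Also, strictly speaking, for a square-integrable martingale $M^2-[M,M]$ is a true martingale on $[0,T]$, so the domination hypothesis $\E[M_\tau^2]\le\E[[M,M]_\tau]$ for bounded $\tau\le T$ is immediate (with equality) once $M_0=0$; calling it merely a local martingale and invoking optional stopping is slightly weaker than what is available. Neither point is a genuine gap, but both are worth stating precisely. Since the tightness argument in Section~\ref{sec:tight} only needs $p=2$, your closing observation that the inequality is then immediate from Doob and $\E[M_T^2]=\E[[M,M]_T]$ is the most efficient version for the paper's purposes.
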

Note that $\mc M_{tn^2}= \mathscr{M}_{tn^2}(f)- \mathscr{M}_{sn^2}(f)$ is also a martingale in $t\geq s$ whose optional quadratic variation is given by
\begin{equation*}
[\mc M, \mc M ]_{tn^2}\;=\; \frac{1}{n^2}\sum_{sn^2\leq r\leq tn^2}|\Delta_r \XSB|^2\,,
\end{equation*}
where $\Delta_r \XSB$ denotes the size of the jump of $\XSB$ at time $r$. Note that $\XSB$ only does jumps of size one, so that the above is $1/n^2$ times the number of jumps in the time interval $[sn^2,tn^2]$. However, since for $\beta\geq 0$ we always have that $\alpha/2n^{\beta}\leq \max\{1/2,\alpha/2\}$, it readily follows that the number of jumps of $\XSB$ in the time interval $[sn^2,tn^2]$ is stochastically dominated by the number of jumps of a continuous-time simple symmetric random walk jumping at rate $\max\{1,\alpha\}$, i.e., by $N_{(t-s)n^2}(m)$, where $m= \max\{1,\alpha\}$ and $N(m)$ is a Poisson process with rate $m$. We can now conclude the proof using that
\begin{equation*}
\E\big[N_{(t-s)n^2}(m)\big] \;=\; (t-s)n^2m\,.
\end{equation*}

\appendix
\section{Auxiliary tools}\label{appendixA}

Here we resume the idea from \cite{fgn2} on how to obtain the explicit solution of PDE  \eqref{pdeRobin}.
\begin{equation}\label{pde11}
\begin{cases}
\partial_t \rho=\frac{1}{2}\Delta \rho, \,\,\,u\neq 0\\ 
\partial_u \rho(t,0^{+})=\partial_u \rho(t,0^{-})=\frac{\kappa}{2}[\rho(t,0^{+})-\rho(t,0^{-})]\\
\rho(0,u)=f(u).
\end{cases}
\end{equation}
Denote by $T^{\kappa}_t f(u)$ the solution of above, where $f$ is  the initial condition and denote  by $\fe(u)$ and $\fo(u)$ its even and odd parts, respectively. By linearity, the solution of \eqref{pde11} may be written as the sum of  $T^{\kappa}_t \fe(u)$  and $T^{\kappa}_t \fo(u)$. Since the PDE \eqref{pde11} preserves parity, we conclude that  $T^{\kappa}_t \fe(u)$ is solution of 
\begin{equation}\label{pde12}
\begin{cases}
\partial_t \rho=\frac{1}{2}\Delta \rho\\ 
\rho(0,u)=\feven(u)\\
\partial_u \rho(t,0^{+})=\partial_u \rho(t,0^{-})=0,
\end{cases}
\end{equation}
which boundary condition can be dropped due to the fact that $\fe$ is an even function. That is, $T^{\kappa}_t \fe(u)$ is simply the solution of usual heat equation
\begin{equation}\label{pde12b}
\begin{cases}
\partial_t \rho=\frac{1}{2}\Delta \rho\\ 
\rho(0,u)=\fe(u).\\
\end{cases}
\end{equation}
which solution is given by the classical formula $$\rho(t,u)\;=\;\int_{\R}f_{\text{even}}(u-y)\dfrac{e^{-y^2/2t}}{\sqrt{2\pi t}}dy\,.$$
On the other hand,  again by  preservation of parity,  we can deduce that\break $T^{\kappa}_t \fo(u)$ is given by
\begin{equation}\label{pde13}
\begin{cases}
\partial_t \rho=\frac{1}{2}\Delta \rho,\,\,\,u>0 \\ 
\rho(0,u)=\fo(u),\,\,\,u>0\\
\partial_u \rho(t,0^{+})=\kappa \rho(t,0^{+}),\,\,\,t>0.
\end{cases}
\end{equation}
on the positive half line, with analogous definition on the negative half line.
The standard technique to solve the \eqref{pde13} is to define 
\begin{equation}\label{ODE}
v(t,u)\;:=\;\kappa \rho(t,u)-\partial_u \rho(t,u)\,,
\end{equation}
which will be solution of 
\begin{equation}\label{pde14}
\begin{cases}
\partial_t v=\frac{1}{2}\Delta v,\,\,\,u>0 \\ 
v(0,u)=\kappa \fo (u)-f'_{\text{odd}}(u)\,\,\,u>0\\
v(t,0)=0.
\end{cases}
\end{equation}
with analogous definition for the negative half line.
Note that the equation above has Dirichlet boundary conditions, which can easily solved by the \textit{image method}. Once we have the expression for $v$, solving the linear ODE \eqref{ODE} gives us the expression for $T^{\kappa}_t \fo(u)$.

\begin{proposition}
The random variables $L(x,t)$ and $L(x\sqrt{n},tn)/\sqrt{n}$ have the same distribution.
\end{proposition}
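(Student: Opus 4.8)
The plan is to deduce this from Brownian scale invariance together with the uniqueness of the occupation density. First I would recall that if $(B_s)_{s\geq 0}$ is a standard Brownian motion, then $\tilde B_s := n^{-1/2}B_{ns}$ is again a standard Brownian motion; consequently its jointly continuous local time field $\tilde L(u,t)$ (which exists by Trotter's theorem, as recalled in Section~\ref{s2}) has the same law, as a process, as $L(u,t)$. In particular $\tilde L(x,t)\overset{d}{=}L(x,t)$.

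Next I would compute the occupation measure of $\tilde B$ and match densities. For a Borel set $A$, the change of variables $r=ns$ gives
\[
\mu^{\tilde B}_t(A)\;=\;\int_0^t \mathds 1_A\big(n^{-1/2}B_{ns}\big)\,ds\;=\;\frac1n\int_0^{nt}\mathds 1_{\sqrt n A}(B_r)\,dr\;=\;\frac1n\,\mu^{B}_{nt}\big(\sqrt n A\big).
\]
Writing $\mu^{B}_{nt}(\sqrt n A)=\int_{\sqrt n A}L(v,nt)\,dv$ and substituting $v=\sqrt n\,u$ yields $\mu^{\tilde B}_t(A)=n^{-1/2}\int_A L(\sqrt n\,u,nt)\,du$. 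On the other hand, by definition $\mu^{\tilde B}_t(A)=\int_A \tilde L(u,t)\,du$. Since both $u\mapsto \tilde L(u,t)$ and $u\mapsto n^{-1/2}L(\sqrt n\,u,nt)$ are continuous in $(u,t)$, the uniqueness of the density of the occupation measure forces
\[
\tilde L(u,t)\;=\;\frac{1}{\sqrt n}\,L\big(\sqrt n\,u,\,nt\big)\qquad\text{a.s., simultaneously for all }(u,t).
\]

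Combining the two steps gives $L(x,t)\overset{d}{=}\tilde L(x,t)=n^{-1/2}L(x\sqrt n,tn)$, which is the claimed identity in distribution (indeed the argument shows equality in law of the whole fields, which is a bit stronger than needed). I do not expect a genuine obstacle here: the only point requiring a little care is that the identification of densities must be carried out for the jointly continuous modifications on both sides, so that the pointwise equality above is meaningful; this is exactly what Trotter's continuity result provides, and everything else is bookkeeping around the scaling relation.
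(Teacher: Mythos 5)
Your proof is correct and rests on the same two ingredients as the paper's argument: Brownian scale invariance and the occupation-density characterization of local time. The paper manipulates the $\varepsilon$-limit definition of $L$ directly (changing variables in the time integral and in $\varepsilon$), whereas you work with the occupation measure and invoke uniqueness of the jointly continuous density; this is a cleaner packaging of the same idea, and as you note it even yields the slightly stronger conclusion that the full local-time fields $\{L(u,t)\}$ and $\{n^{-1/2}L(\sqrt n\,u,nt)\}$ agree in law.
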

\begin{proof}
Doing the changing of variables  $u =sn$, we get
\begin{align*}
L(x,t) &\;=\; \lim_{\eps\searrow 0}\frac{1}{2\eps} \int_0^t \mathds{1}_{B_s\in (x-\eps,x+\eps)}ds \;=\; \lim_{\eps\searrow 0}\frac{1}{2\eps n} \int_0^{tn} \mathds{1}_{B_{u/n} \in (x-\eps,x+\eps)}\frac{du}{n} \\
&  \;=\; \lim_{\eps\searrow 0}\frac{1}{2\eps n} \int_0^{tn} \mathds{1}_{\sqrt{n}B_{u/n}\in (x\sqrt{n}-\eps \sqrt{n},x\sqrt{n}+\eps\sqrt{n})}\frac{du}{n}\,. 
\end{align*}
Due to the BM's scaling invariance, the last expression is equal in law to
\begin{align*}
\lim_{\eps\searrow 0}\frac{1}{2\eps n} \int_0^{tn} \mathds{1}_{B_u\in (x\sqrt{n}-\eps \sqrt{n},x\sqrt{n}+\eps\sqrt{n})}\frac{du}{n}\;=\; \frac{L(x\sqrt{n},tn)}{\sqrt{n}}\,.
\end{align*}
\end{proof}

The next result is probably standard, however we were not able to find it in the literature, so we provide a proof. Recall that $\xi_{tn^2}(0)$ denotes the local time of simple random walk at the origin.
\begin{proposition}
\label{prop:localtime}
Let $p\in \bb N$, then for all $t>0$ there exists a constant $C>0$ such that for all $n\in \bb N$ and all $x\in \bb Z$,
\begin{equation*}
\E_x\big[(\xi_{sn^2,tn^2}(0))^p\big]\;\lesssim\; |t-s|^{\frac{p}{2}}n^p\,.
\end{equation*}	
\end{proposition}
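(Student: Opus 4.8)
The plan is to reduce, via the Markov property, to the local time started from the origin, and then to control its moments by an explicit Dirichlet integral using the local central limit estimate \eqref{app1}.

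First I would apply the Markov property at time $sn^2$: writing $T=(t-s)n^2$, one has $\E_x\big[(\xi_{sn^2,tn^2}(0))^p\big]=\E_x\big[\phi(X_{sn^2})\big]$ with $\phi(y)=\E_y\big[(\xi_{T}(0))^p\big]$, so it suffices to bound $\phi(y)$ uniformly in $y\in\bb Z$. To compute $\phi(y)$ I would expand the $p$-th power of $\xi_T(0)=\int_0^T\one_{\{0\}}(X_u)\,du$ and use that the integrand is symmetric in the time variables, obtaining $\phi(y)=p!\int_{0\le u_1\le\cdots\le u_p\le T}\bb P_y[X_{u_1}=0,\ldots,X_{u_p}=0]\,du$; by the Markov property again this probability equals $p_{u_1}(y)\prod_{i=2}^p p_{u_i-u_{i-1}}(0)$, where $p_u(\cdot)$ denotes the transition probabilities of $X$.

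The key ingredient is then the pointwise bound $p_u(y)\le c\,u^{-1/2}$, uniform in $y\in\bb Z$ and $u>0$: for $u\ge 1$ this is \eqref{app1} (with $x=0$, using in addition $p_u(y)\le p_u(0)$, which follows from the Fourier representation $p_u(y)=\frac{1}{2\pi}\int_{-\pi}^{\pi}\cos(y\theta)e^{u(\cos\theta-1)}\,d\theta$ and positivity of the integrand), while for $0<u\le 1$ it is the trivial estimate $p_u(y)\le 1\le u^{-1/2}$. Plugging this into the integral formula for $\phi(y)$ and changing variables to the increments $v_i=u_i-u_{i-1}$ (with $u_0=0$), I obtain $\phi(y)\le p!\,c^p\int_{\{v_i\ge 0,\ v_1+\cdots+v_p\le T\}}\prod_{i=1}^p v_i^{-1/2}\,dv$. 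The last integral is the standard Dirichlet integral, equal to $\Gamma(1/2)^p/\Gamma(p/2+1)\cdot T^{p/2}$, and since $T^{p/2}=|t-s|^{p/2}n^p$ this gives the asserted bound with a constant depending only on $p$.

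I do not expect a genuine obstacle: this is a routine moment computation. The only points requiring a little care are the reduction of the supremum over starting points to $y=0$, for which Fourier positivity of $p_u$ is the cleanest device, and extending the local central limit bound down to small times, which is immediate from $p_u(\cdot)\le 1$.
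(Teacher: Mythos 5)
Your proof is correct and follows essentially the same route as the paper: expand the $p$-th moment of the local time into an iterated time integral via the Markov property, bound each transition probability by $cu^{-1/2}$ via the local CLT, and evaluate the resulting Dirichlet integral to produce $T^{p/2}=|t-s|^{p/2}n^p$. You are somewhat more careful than the paper's terse sketch, which treats only the start at the origin and omits both the reduction $p_u(y)\le p_u(0)$ (which you supply via Fourier positivity) and the small-time regime $0<u\le 1$; these refinements are genuine but do not change the method.
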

\begin{proof}
For simplicity we prove the result only for $s=0$, however since our estimates are uniform in the starting point, the general case is a straightforward consequence.
First note that a change of variables yields that
\begin{equation*}
\xi_{tn^2}(0)\;=\; \int_0^{tn^2}\mathds{1}_{\{X_s=0\}}\, ds\;=\; 
n^2\int_0^{t} \mathds{1}_{\{X_{sn^2}=0\}}\, ds\,.
\end{equation*}
We then see that
\begin{equation}\label{eq:localtimeexpansion}
\E\big[(\xi_{tn^2}(0))^p\big] \;=\; n^{2p}m! \int_0^{t}\,ds_1\, \int_{s_1}^{t}\,ds_2\cdots\int_{s_{m-1}}^{t}\, ds_m
\prod_{i=1}^{m}p_{(s_i-s_{i-1})n^2}(0)\,,
\end{equation}
where we set $s_0=0$. We apply now the local central limit theorem \cite[Theorem 2.5.6]{Lawler}, which states that there is a constant $c$ such that for all $t$ and all~$n$
\begin{equation*}
np_{tn^2}(0)\;\leq\; \frac{c}{\sqrt{t}}\,.
\end{equation*}
Plugging this estimate into \eqref{eq:localtimeexpansion} we may now finish the proof. 
\end{proof}

Next we furnish a short proof of Lemma~\ref{lem:evenapprox}.
\begin{proof}[Proof of Lemma~\ref{lem:evenapprox}]
By equation \eqref{eq47} and the Lipschitz continuity of $u\mapsto  P_t \fe (u)$ provided by Corollary~\ref{cor:Lipschitz}  it is enough to prove \eqref{eq:evenapprox} with ${P}_t\fe (u)$  replaced by ${P}_t\feven \bigP{\lfloor un\rfloor }{n}$.  Moreover, to simplify notation we denote by $un\in\bb Z$ its integer part. We can now write
\begin{equation}\label{eq:semigroupeven}
{\bf P}^n_t \feven \bigP{\lfloor un\rfloor}{n}\;=\; \sum_{z\in \frac{1}{n}\bb Z}
\feven(z) p_{tn^2}(n(u-z))\,.
\end{equation}
We now apply the local central limit theorem, \cite[Theorem 2.3.11]{Lawler} which states that for $x\in \frac{1}{n}\bb Z$,
\begin{equation*}
np_{tn^2}(nx)\;=\; K_t(x)\exp\Bigg\{{O}\Big(\frac{1}{tn^2} +\frac{\|nx\|^4}{(tn^2)^3}\Big)\Bigg\}\,,
\end{equation*}
where $K_t$ denotes the usual heat kernel.
We use this estimate in \eqref{eq:semigroupeven} for all $z\in \frac{1}{n}\bb Z$ such that $|n(u-z)|\leq n^{5/4}$. Since there exists a constant $C>0$ such that for all $x\in [0,1)$ we have the estimate $|e^{x}-1|\leq C|x|$ the above states that $|np_{tn^2}(n(u-z))-K_t(u-z)|\leq \frac{C}{n}$ for the range of $z$'s just mentioned.
Moreover, note that 
\begin{equation*}
\Big|\sum_{\topo{z\in \frac{1}{n}\bb Z\,:}{|n(u-z)|\geq n^{5/4}}} \feven(z) p_{tn^2}(n(u-z))\Big|
\;\leq\;  \|f\|_{\text{L}}\;\bb P_0\big[\,|X_{tn^2}|\geq n^{5/4}\,\big]\,,
\end{equation*}
and by \cite[Proposition 2.1.2 (b)]{Lawler} we see that the above is bounded by\break $C_1e^{-C_2 n^{1/8}}$, for some constants $C_1$ and $C_2$.
The proof may now be finished by using the above approximation of the continuous heat kernel by the discrete one and by a standard Riemann sum approximation. We omit the details.
\end{proof}

\section*{Acknowledgements}
T. F. was supported by a project Jovem Cientista-9922/2015, FAPESB-Brazil and by National Council for Scientific and Technological Development (CNPq) through a Bolsa de Produtividade. D. E. gratefully acknowledges financial support 
from the National Council for Scientific and Technological Development - CNPq via a 
Universal grant 409259/2018-7. 
D. S.  would like to thank CAPES for a PhD scholarship, which supported his research.

\bibliography{bibliografia}
\bibliographystyle{plain}

\end{document}